\documentclass[12pt,a4paper,leqno]{amsart}

\usepackage[utf8]{inputenc}
\usepackage{amsfonts, amsmath, amssymb, amsthm}
\usepackage{indentfirst}
\usepackage{xcolor}

\usepackage[T1]{fontenc}
\renewcommand{\epsilon}{\varepsilon}

\newcommand{\calG}{\mathcal{G}}

\newcommand{\cl}{\operatorname{cl}}

\newcommand{\id}{\operatorname{id}}

\renewcommand{\epsilon}{\varepsilon}
\renewcommand{\int}{\operatorname{int}}
\renewcommand{\phi}{\varphi}

\newtheorem{thm}{Theorem}
\newtheorem{pro}[thm]{Proposition}
\newtheorem{lem}[thm]{Lemma}
\newtheorem{cor}[thm]{Corollary}

\theoremstyle{definition}

\title{Homeomorphisms between compact subsets of   real numbers}

\subjclass[2010]{Primary: 26A03; Secondary:  54F05}
\keywords{order topology, Cantorval,  gap, interval, $t$-set}
  
\author{Sławomir Kusiński}
\address{Faculty of Applied Mathematics,
Silesian University of Technology, Kaszubska 23 , 44-100 Gliwice, Poland}

\email{Slawomir.Kusinski@polsl.pl}

\author{Szymon Plewik}
\address{Institute of Mathematics, University of Silesia in Katowice, Bankowa~14, 40-007 Katowice, Poland}

\email{szymon.plewik@us.edu.pl}

\begin{document}

  \begin{abstract} We first present  a reduction of properties  of  compact sets of real  numbers to properties of countable orders. Then discuss  a variant of homogeneity  of compact subsets of  real numbers, focusing on the family of  $t$-sets. Finally, we prove that there are exactly $\omega_1$ many non-homeomorphic $t$-sets. \end{abstract}

  \maketitle
  \section{Introduction}
	The preliminary purpose of this paper is to collect  reductions of properties of  compact subsets of real  numbers to properties of countable orders. In this respect, the  paper is a continuation of the research discussed in  \cite{bkp}. Relevant invariants are applied to the so-called $t$-sets in $\mathbb R$, which allows for their full classification up to homeomorphism. As the readers will note, this classification is demonstrated by the method of exhausting the possibilities through the order in which the examples are discussed.

 If $Y$ is a  closed subset of real  numbers $\mathbb{R}$, then any   component of its complement  is called  a $Y$-\textit{gap} and any non-trivial component of $Y$ is  called a $Y$-\textit{interval}. An endpoint $e$ of a $Y$-interval $I$ is called \textit{free} whenever 
$e \in \int_Y I$: in other words $e\notin \cl_Y(Y\setminus I)$. Let $\calG_Y$, $\mathcal B_Y$, $\mathcal C_Y$ and $\mathcal D_Y$ be the families of all $Y$-gaps,  all $Y$-intervals with no free endpoint, all $Y$-intervals with  exactly one free endpoint and all $Y$-intervals with two free endpoints, respectively. 
The family $$\mathcal Q_Y= \mathcal G_Y\cup\mathcal B_Y\cup \mathcal C_Y\cup \mathcal D_Y $$ is countable and consists of pairwise disjoint intervals. We will use the abbreviation $\mathcal I_Y=\mathcal B_Y\cup \mathcal C_Y\cup \mathcal D_Y $. If $I$ and $J$ are distinct intervals, then   we  write $I < J$ if and only if $\sup I \leqslant \inf J$. It won't be misleading when $<$  
  denotes the strict inequality between real numbers, too. 
	
Any linearly ordered space $(\mathcal Q_Y, <)$ has a few obvious properties. For example, when $Y$ is compact, then the smallest and largest elements of $\mathcal Q_Y$ exist and belong to $\mathcal G_Y$. Also, between any two different  $Y$-intervals there is  a $Y$-gap.

If $(\mathcal Q,<)$ is an ordered space, then we say that $\mathcal G \subseteq \mathcal Q$ \textit{interlaces} $\mathcal B\subseteq \mathcal Q$ whenever for every two distinct elements $I<J$ of $\mathcal B$ there is $K\in\mathcal G$ with $I<K<J$.  In the special case $\mathcal B=\mathcal Q$, this is just density of $\mathcal G$ in  $\mathcal Q$.
For a compact set $Y\subset \mathbb R$, the family of all $Y$-gaps interlaces the family of all $Y$-intervals.

A set $Y\subseteq \mathbb R$ is regularly closed if and only if  $Y$ is closed and the union of all $Y$-intervals is dense in $Y.$ 
As noted in \cite{bkp}, some  topological properties of a regularly closed subset $Y\subseteq \mathbb R$ can be described by  properties of the ordered space $(\mathcal Q_Y, <)$. This includes results from \cite{bkp} such as Theorem 9, Proposition 10, Corollaries 12, 13 and 15,  and Lemma  16. 

 If $\mathcal X$ is a set which is  linearly ordered by a  relation $<$, then $(\mathcal X,<)$ is briefly called an \textit{ordered space}.  The topology induced on $\mathcal X$ by $<$  is called the \textit{order topology}, see \cite[pp. 56-57]{enge}  and compare \cite[p. 66]{ss}.  An increasing bijection between ordered spaces is called an \textit{isomorphism}. If  $(\mathcal Q, <)$ and $(\mathcal P, <)$ are ordered spaces, then the \textit{lexicographic order} on  $\mathcal Q \times \mathcal P$  is defined by $(a,b) < (c,d)$ whenever $a<c$, or  whenever $a=c$ and $b<d$.  The remaining notions relevant to our purposes are defined just before we use them for the first time. Other  notions and notation are standard, compare \cite{bkp}, \cite{enge}, \cite{si} or \cite{ss}. 
  
	As W. Kulpa told us, it is worth noting that the now obvious fact that: \textit{The continuous bijection of the real numbers onto the real numbers is
	monotone}; should be attributed to B. Bolzano \cite{bo}, as an obvious consequence of the Bolzano intermediate value theorem. We use this fact, without recalling it, in the version that for any bijection $f\colon \mathbb R \to \mathbb R$, being  monotone  is equivalent to being a homeomorphism. 

A few  books and articles  discuss so-called tame sets to explain the phenomenon  of Antoine's necklace.   Fascinated  by Bing's claim: \textit{each
Cantor set in a line or a plane is tame}; see \cite[p. 435]{bi}, we restrict ourselves to sets that are tame in $\mathbb R$. As J. Mioduszewski noted 
\cite[p. 161]{mi},  the French version of K. Kuratowski's monograph \cite[pp. 341--385]{kur} discusses the fact that any compact subset of $\mathbb R$ is tame in $\mathbb R^2$, what has been first proved in  \cite{mk}. We have observed that   tame sets in $\mathbb R$, hereinafter we will call them $t$-sets, are discussed  in  so-called mathematical folklore, i.e., we did not find their comprehensive description in the literature.
   \section{Reverse reduction} 
  Suppose that a countable  ordered space $(\mathcal Q, <)$ is given such that $\mathcal Q = \mathcal G \cup \mathcal I$ and 
$\mathcal G \cap \mathcal I =\emptyset$. We analyze what assumptions need to be made to be able to find a compact subset $Y\subset \mathbb R$ and an isomorphism $F: \mathcal Q \to \mathcal Q_Y$ such that  $ F[\mathcal I] =\mathcal B_Y \cup \mathcal C_Y \cup \mathcal D_Y$. Undoubtedly a necessary condition is that    if $I$ and $J$ belong to $\mathcal I$, then there exists $K$ in $\mathcal G$ lying between $I$ and $J$, in other words $\mathcal G$ interlaces $\mathcal I$. 

  Recall the Dedekind-completion method for  an ordered space  
$(\mathcal Q, <)$. A cut $(A,B)$ is a partition of $\mathcal Q$ into two sets $A$ and $B$, such that each element of $A$ is less than every element of $B$, and $A$ contains no greatest element.  If $B$ has the smallest element, then   $(A,B)$ corresponds to this element, 
i.e., if $b=\inf B$, then $b=(A,B)$. Otherwise, $(A,B)$ defines the unique element such that  $A$ consists of  elements  less than $(A,B)$, and $B$ consists of   elements  greater than  $(A,B)$. The family $\mathfrak C$,  which consists of all cuts $(A,B)$, is  ordered by the relation $$ (A,B)<(D,E) \Leftrightarrow A\subset D.$$ 
 Cantor's isomorphism theorem states that any countable dense unbounded, i.e., with no endpoint,  ordered spaces are isomorphic, hence the rational numbers belonging to the interval $(0,1)$ are isomorphic to any countable and unbounded densely ordered space. Therefore, if $(\mathcal Q, <)$ is countable   dense and unbounded, then $(\mathfrak C, <)$ is isomorphic to the unit interval $[0,1]$. Under such an isomorphism  $(\emptyset, \mathcal Q)$,  $(\mathcal Q, \emptyset)$
and $\mathcal Q$ pass to $0$, $1$ and the rational numbers from $(0,1)$, respectively. 

   \begin{thm}\label{tr1}
 Let $(\mathcal  G \cup \mathcal I, <)$ be a countable  ordered space such that  the sets   $ \mathcal G$ and  $\mathcal I $ are disjoint. If $ \mathcal G$ interlaces  $\mathcal I $, and  the smallest and the largest element of $\mathcal  G \cup \mathcal I$ belong to $\mathcal G $, then there exist a compact subset $Y\subset \mathbb R$ and the isomorphism  $F\colon \mathcal G\cup \mathcal I \to \mathcal Q_Y$ such that  $F[\mathcal G] =\mathcal G_Y$ and 
$ F[\mathcal I] =\mathcal I_Y$.  
   \end{thm}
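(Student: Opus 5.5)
Our plan is to build $Y$ by replacing each element of $\mathcal Q=\mathcal G\cup\mathcal I$ by a nondegenerate interval placed in $[0,1]$, with lengths chosen summable so that everything fits. Enumerate $\mathcal Q=\{q_1,q_2,\dots\}$ and give $q_n$ the weight $2^{-n}$. For $q\in\mathcal Q$ put
$$a(q)=\sum_{p<q}2^{-n(p)},\qquad b(q)=a(q)+2^{-n(q)},$$
where $n(p)$ is the index of $p$. Then $p<q$ implies $b(p)\leqslant a(q)$, so the intervals $[a(q),b(q)]$ have pairwise disjoint interiors and are ordered like $\mathcal Q$. The total sum is at most $1$, so all of them lie in $[0,1]$.

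We then define
$$Y=[0,1]\setminus\bigcup_{q\in\mathcal G}(a(q),b(q)),$$
which is compact. The first point is to check that the components of the complement of $Y$ are exactly the open intervals $(a(q),b(q))$ for $q\in\mathcal G$, together with the two unbounded rays. Two gap intervals $(a(p),b(p))$ and $(a(q),b(q))$ with $p<q$ never abut. If $b(p)=a(q)$, then no element of $\mathcal Q$ lies strictly between $p$ and $q$, and the point $b(p)$ stays in $Y$ anyway, so each is a full component of the complement. The smallest and largest elements of $\mathcal Q$ lie in $\mathcal G$; we merge the gap of the smallest element with $(-\infty,0)$ and the gap of the largest with $(1,\infty)$, shifting so that $0=a(\min)$. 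Strictly speaking we set $Y=[b(\min),a(\max)]\setminus\bigcup$ and let $F(\min)=(-\infty,b(\min))$ and $F(\max)=(a(\max),\infty)$. Next, for $q\in\mathcal I$ the interval $[a(q),b(q)]$ is contained in $Y$.

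The key step is showing that these are exactly the nontrivial components of $Y$, so that $F(q)=[a(q),b(q)]$ for $q\in\mathcal I$ and $F(q)=(a(q),b(q))$ for $q\in\mathcal G$ is a bijection onto $\mathcal Q_Y$.

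First, $[a(q),b(q)]$ is a whole component. Take $x>b(q)$ in the component. Every point $y$ of $(b(q),x)$ is then in $Y$. The image of $\mathcal Q$ is dense in measure, since the lengths sum to $b(\max)-a(\min)$. Hence some element $r>q$ has its interval meeting $(b(q),x)$ in positive length. If $r\in\mathcal G$, this contradicts $(b(q),x)\subseteq Y$. If $r\in\mathcal I$, interlacing gives some $K\in\mathcal G$ with $q<K<r$, and its gap lies in $(b(q),x)$, again a contradiction.

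Second, every nontrivial component $C$ of $Y$ is of this form. Since the covered measure is full, $C$ meets some interval $[a(r),b(r)]$ in positive length, and $r$ cannot be in $\mathcal G$. So $r\in\mathcal I$ and $C=F(r)$ by the first part.

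The main obstacle is this measure argument. It needs the identity that the total length of all the intervals equals $b(\max)-a(\min)$, i.e., that the complement of their union in the hull has measure zero. This holds because $\sum_q 2^{-n(q)}$ is exactly the length of the hull, by construction of $a$ as partial sums. Once that is granted, $F$ is increasing because $a$ is, and it sends $\mathcal G$ onto $\mathcal G_Y$ and $\mathcal I$ onto $\mathcal I_Y$, which proves Theorem~\ref{tr1}.
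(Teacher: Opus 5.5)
Your argument is correct, but it takes a different route from the paper.

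The paper blows each element of $\mathcal G\cup\mathcal I$ up into a copy of $\mathbb Q$ and takes the Dedekind completion of the lexicographic product $(\mathcal G\cup\mathcal I)\times\mathbb Q$. It identifies this completion with $[0,1]$ via Cantor's isomorphism theorem and then deletes the interiors of the blocks indexed by $\mathcal G$. The step ``no nondegenerate interval of $Y$ escapes the blocks $H[B_I]$'' is handled there by a cardinality remark; what is really used is the density of $(\mathcal G\cup\mathcal I)\times\mathbb Q$ in the completion.

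You instead realize the order directly by the partial sums $a(q)=\sum_{p<q}2^{-n(p)}$. The same step then becomes the observation that the blocks $[a(q),b(q)]$, which have pairwise disjoint interiors, have total length $b(\max)-a(\min)$. So the leftover part of the hull is Lebesgue-null and contains no interval.

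Your details check out:
\begin{itemize}
\item In the first part, $r>q$ is forced, because every block with $r<q$ ends at or before $a(q)$.
\item The gap of the interlacing element satisfies $b(q)\leqslant a(K)<b(K)\leqslant a(r)<x$, with $K\neq\min,\max$.
\item With the corrected $Y=[b(\min),a(\max)]\setminus\bigcup_{q\in\mathcal G}(a(q),b(q))$, each removed interval has both endpoints in $Y$, so it is a full component of $\mathbb R\setminus Y$.
\end{itemize}

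Your construction buys explicit coordinates and economy: no completion and no appeal to Cantor's theorem. The measure argument could even be replaced by a direct count of block lengths on either side of an interval $(x,y)$. The paper's version, in turn, stays within the lexicographic-order and cut techniques it reuses later, in Lemma~\ref{le3}, Proposition~\ref{po4} and the spaces $S_\alpha(X)$.
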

   \begin{proof}  Give $(\mathcal G\cup\mathcal I)\times\mathbb Q$ the lexicographic order, where $\mathbb Q$ denotes the set of all rational numbers. Clearly, this order is countable, dense and has no endpoints, hence its Dedekind completion $\mathfrak C$ is isomorphic to $[0,1]$. Let $H\colon \mathfrak C \to [0,1]$ be such an isomorphism.
For each $I\in \mathcal I$ (resp. $G\in \mathcal G$), put  
$$
B_I=\cl_{\mathfrak C}(\{I\}\times\mathbb Q) \quad \mbox{ (resp. } B_G=\int_{\mathfrak C} \cl_\mathfrak C(\{G\}\times\mathbb Q))$$ and 
define
$$\textstyle 
Y=H\left[\mathfrak C\setminus\bigcup_{G\in\mathcal G}B_G\right].$$
   \indent  If $I\in\mathcal I$, then $H[B_I]$, being a closed interval,  is a non-trivial component of $Y$. Indeed,  if a connected subset of $Y$ met both 
	$H[B_I]$ and another $H[B_{J}]$, the interlacing hypothesis would place some $G\in\mathcal G$ between $I$ and $J$, and the removed open interval $H[B_G]$ would separate the two parts.  Thus, any connected subset of $Y$ meets at most one $H[B_I]$, where $I\in\mathcal I$.	
	Moreover, every non-trivial interval $[a,b] \subseteq Y$ has to be contained in  $H[B_I]$ for some $I\in\mathcal I$, since
	$\mathcal G$ is countable and $H^{-1}([a,b])$ has the cardinality continuum.

			If $G\in\mathcal G$, then $H[B_G]$ is a  component of $[0,1] \setminus Y$, since $H[B_G]$ is an interval with endpoints in $Y\cup \{0,1\}$.
			
			Define the necessary isomorphism   $F: \mathcal G\cup \mathcal I \to \mathcal Q_Y$  as follows.
  \[F(J)= \begin{cases}
	  (\leftarrow, \min Y),& \mbox{ for } J= \min \mathcal G,\\
		( \max Y,\rightarrow), & \mbox{ for } J=\max  \mathcal G,\\
		H[B_J],& \mbox{ for any other } J\in \mathcal G \cup \mathcal  I.
	\end{cases}\]			 
   \end{proof}

	Note that if $F$ is as in the above proof, then, for any $I\in \mathcal I$, we have
	\begin{itemize}
		\item  $F(I)\in \mathcal B_Y$ if and only if  no element of  $\mathcal G$ is neither an immediate predecessor nor an immediate successor of $I$,
		\item $F(I)\in \mathcal D_Y$ if and only if  in $\mathcal G$ there exist both the immediate predecessor and  the immediate successor of $I$,
		\item $F(I)\in \mathcal C_Y$ if and only if  in $\mathcal G$ there exists either  the immediate predecessor or the immediate successor of $I$.
		\item Also, if $G_0$ and $G_1$ are in $\mathcal G$ and $G_0$ is the immediate predecessor of $G_1$, then there exists $y \in Y$ such that $y$ is the right endpoint of the interval $G_0$ and it is also the left endpoint of the interval $G_1$.
	\end{itemize}
		    Hence, we have that if $\mathcal I_Y= \mathcal B_Y \cup \mathcal C_Y \cup \mathcal D_Y$, then the ordered space $(\mathcal G_Y\cup \mathcal I_Y, <)$  allows us to reconstruct the set $Y$ up to a homeomorphism of $\mathbb R$.

   \begin{thm}\label{tr2}
 Let $X$ and $Y$ be compact subsets of $\mathbb R$ such that there exists an isomorphism $f\colon \mathcal Q_X \to \mathcal Q_Y$  satisfying  $f[\mathcal G_X] = \mathcal G_Y$. Then  there exists an increasing homeomorphism $F\colon \mathbb R \to \mathbb R$ so that $F[X]=Y$.
  \end{thm}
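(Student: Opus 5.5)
We will build $F$ by first defining it on the ``structural'' points of $\mathbb R$ and then extending it by monotonicity and density. Since $f$ is an isomorphism with $f[\mathcal G_X]=\mathcal G_Y$, it also maps $\mathcal I_X$ onto $\mathcal I_Y$. Every point of $\mathbb R$ then falls into exactly one of three classes: it lies in the closure of some element of $\mathcal Q_X$, or it is a point of $X$ lying in no closure of an element of $\mathcal Q_X$ (call these \emph{limit points}). The plan is to send $\cl I$ to $\cl f(I)$ for each $I\in\mathcal Q_X$, and to define $F$ on limit points through cuts of $\mathcal Q_X$.

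\textbf{Step 1: the closures of the intervals.} For each bounded $I\in\mathcal Q_X$, let $F$ map $\cl I$ onto $\cl f(I)$ by the increasing affine map between the two endpoint pairs. For the two unbounded gaps (the smallest and the largest element, which $f$ sends to the smallest and largest element of $\mathcal Q_Y$), use a translation, for instance $x\mapsto x-\min X+\min Y$ on $(\leftarrow,\min X]$, and similarly on the right.

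We must check that these affine pieces agree where two closures meet. Two closures meet only when one element is the immediate successor of the other in $\mathcal Q_X$, sharing an endpoint. In that case $f(I)$ and $f(J)$ are also consecutive in $\mathcal Q_Y$. It remains to see that consecutive elements of $\mathcal Q_Y$ share an endpoint. If there were a gap of reals between them, that region would contain either a point of $Y$ lying in no element of $\mathcal Q_Y$ or a further element, and we must rule out the first case. That case cannot occur: any point strictly between the two closures is either in $Y$ or in a gap. If it is in $Y$, then a neighbourhood in $Y$ is not an interval, so there are gaps arbitrarily close to it, and one such gap lies strictly between the two elements, contradicting consecutiveness. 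Hence the pieces are compatible, and so far $F$ is increasing on the union $U_X$ of the closures of the elements of $\mathcal Q_X$.

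\textbf{Step 2: the limit points.} For $x\in\mathbb R\setminus U_X$, we have $x\in X$, and $x$ determines a cut $(A_x,B_x)$ of $\mathcal Q_X$: $A_x$ is the set of elements lying to the left of $x$. Here $A_x$ has no greatest element and $B_x$ has no least element. Transport this cut by $f$ and define
$$F(x)=\sup\bigcup_{I\in A_x}f(I)=\inf\bigcup_{J\in B_x}f(J).$$

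The main obstacle is showing that this supremum and infimum coincide, and that every limit point of $Y$ is attained.

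For the first point, suppose instead that $s<t$. Then $(s,t)$ meets no element of $\mathcal Q_Y$, so $(s,t)\subseteq Y$. But then $(s,t)$ is contained in some $Y$-interval, which is an element of $\mathcal Q_Y$, a contradiction. So $s=t$. The common value is not in $U_Y$, because any element of $\mathcal Q_Y$ containing it in its closure would have to be the maximum of $f[A_x]$ or the minimum of $f[B_x]$, and neither exists.

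For surjectivity onto the limit points of $Y$, apply the same construction to $f^{-1}$. This gives the inverse map, so $F$ is a bijection of $\mathbb R$.

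\textbf{Step 3: monotonicity and conclusion.} The map $F$ is increasing on the whole line. This follows by comparing cuts: if $x<x'$, then some element of $\mathcal Q_X$, or some closure, separates them, unless both lie in a common closure, where $F$ is affine. Since $f$ preserves the order, the images are separated in the same way. An increasing bijection of $\mathbb R$ is a homeomorphism, by the Bolzano fact recalled in the introduction.

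Finally, $F[X]=Y$. Indeed, $F$ maps gaps onto gaps and maps each $\cl I$ with $I\in\mathcal I_X$ into $Y$. Endpoints of gaps that lie in $X$ are sent to endpoints of the corresponding gaps in $Y$. Limit points, which lie in $X$, are sent to limit points of $Y$, and conversely. Therefore $X=\mathbb R\setminus\bigcup\mathcal G_X$ goes onto $\mathbb R\setminus\bigcup\mathcal G_Y=Y$.
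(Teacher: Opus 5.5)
Your proposal is correct and takes essentially the same route as the paper. Both use translations on the two unbounded gaps, increasing affine maps from each element of $\mathcal Q_X$ onto its image under $f$, and, at the remaining points of $X$, the supremum of the images of the elements to the left, justified by $f$ carrying cuts of $\mathcal Q_X$ to cuts of $\mathcal Q_Y$; you simply make explicit the checks the paper leaves implicit (agreement of the affine pieces at shared endpoints, equality of the transported supremum and infimum, and bijectivity via the same construction for $f^{-1}$).
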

  \begin{proof} 
If $x\in (-\infty, \inf X)$, then put $F(x)=x-\inf X + \inf Y$. If $x\in (\sup X, +\infty)$, then put $F(x)=x-\sup X + \sup Y$.
If $x\in (a, b)=G\in \mathcal G_X$ or $x\in [a, b]=I\in \mathcal I_X$, then put   $$F(x)=\frac{x-a}{b-a}(d-c) + c, $$ where $f(G)=(c,d)$ or $f(I)=[c,d]$. 
But, if $x\in \mathbb R \setminus\bigcup \mathcal Q_X$, then put $$ F(x)= \sup \bigcup \{f(J)\colon J \in \mathcal Q_X\mbox{ and } J < x\}.$$ By the definition,   if $I\in \mathcal I_X$, then $f(I)=F[I]\in \mathcal I_Y$ and if $G\in \mathcal G_X$, then $f(G)=F[G]\in \mathcal G_Y$. As $f$ is an isomorphism,  then any cut $(A,B)$ with respect to $\mathcal Q_X$, is mutually uniquely assigned to the cut $(f[A],f[B])$ with respect to $\mathcal Q_Y$. This implies that  $F\colon \mathbb R \to \mathbb R$ is an increasing homeomorphism and $F[X]=Y$. 
  \end{proof}
	
	\section{Some auxiliary  lemmas}
A following alternative form of the Cantor isomorphism theorem  seems to  be helpful. 
  \begin{lem}\label{le2}
	Let $\mathcal Q$ be the set of all rational numbers ordered by the   natural inequality $<$. Assume that  
$$\mathcal Q = A_0\cup A_1\cup\ldots =  B_0\cup B_1\cup \ldots, $$ where sets $A_n$ are pairwise disjoint and sets $B_n$ are also pairwise disjoint. 
If each of the sets $A_n$ and $B_n$ interlaces $\mathcal Q$, then there exists an isomorphism $f\colon \mathcal Q \to \mathcal Q$ such that $f[A_n]=B_n$ for each $n\geqslant 0$.
  \end{lem}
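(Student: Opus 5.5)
We will use a back-and-forth argument, the same one that proves Cantor's isomorphism theorem, with one extra bookkeeping constraint: every element of $A_n$ is sent into $B_n$. Let $\mathcal Q=\{q_0,q_1,\ldots\}$ be an enumeration. For $q\in\mathcal Q$ let $\alpha(q)$ be the unique $n$ with $q\in A_n$, and let $\beta(q)$ be the unique $n$ with $q\in B_n$. We build finite partial isomorphisms $f_0\subseteq f_1\subseteq\ldots$. Each $f_k$ will be a strictly increasing map from a finite subset of $\mathcal Q$ into $\mathcal Q$ and will satisfy $\beta(f_k(q))=\alpha(q)$ for every $q$ in its domain. The required isomorphism is $f=\bigcup_k f_k$.

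The steps alternate. At an even step $2k$, let $q$ be the first $q_i$ not yet in the domain; we extend $f$ to $q$. At an odd step, let $p$ be the first $q_i$ not yet in the range; we extend $f^{-1}$ to $p$. The two cases are symmetric, since the hypotheses on the $A_n$ and on the $B_n$ are identical.

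The key step is the extension. Suppose $f_k$ has domain $x_1<\dots<x_m$, with images $y_1<\dots<y_m$, and suppose $x_j<q<x_{j+1}$. We look for $y$ with $y_j<y<y_{j+1}$ and $y\in B_{\alpha(q)}$. Such a $y$ exists because $B_{\alpha(q)}$ interlaces $\mathcal Q$: applying this to the two distinct elements $y_j<y_{j+1}$ gives a point of $B_{\alpha(q)}$ strictly between them. The boundary cases $q<x_1$ and $q>x_m$ (and the empty domain at the start) need a separate argument, because interlacing only provides points between two given elements. There we first pick any $z<y_1$, which exists since $\mathcal Q$ has no endpoints, and then take a point of $B_{\alpha(q)}$ strictly between $z$ and $y_1$; the case $q>x_m$ is symmetric. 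At the very first step we just take any element of $B_{\alpha(q)}$, which is nonempty since it interlaces $\mathcal Q$ (any two rationals have a point of it between them). For the backward step we use the same argument with the roles swapped: the point $p$ has prescribed class $\beta(p)$, and we find a preimage in $A_{\beta(p)}$ in the corresponding gap among the $x_j$.

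Every $q_i$ eventually enters both the domain and the range, so $f$ is an increasing bijection $\mathcal Q\to\mathcal Q$ with $f[A_n]\subseteq B_n$ for every $n$. The same holds for $f^{-1}$, namely $f^{-1}[B_n]\subseteq A_n$, because the class condition is preserved at every step. Together these give $f[A_n]=B_n$.

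The only real obstacle is making sure that interlacing by each individual part supplies a point of the required class in every gap and also beyond the extremes of a finite set. Both situations are handled as above, using density and the absence of endpoints in $\mathcal Q$. No further properties are needed, and the finitely or countably many indices $n$ play no role, since each step involves only a single class.
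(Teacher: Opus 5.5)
Your proof is correct and follows essentially the same back-and-forth argument as the paper: at each step you extend $f$ or $f^{-1}$ while preserving the class condition $\beta(f(q))=\alpha(q)$, and you use the fact that the relevant $B_n$ (resp.\ $A_n$) interlaces $\mathcal Q$ to find a point of the right class in the required gap. You also treat explicitly the positions beyond the extremes, using that $\mathcal Q$ has no endpoints, and derive $f[A_n]=B_n$ from the symmetric class condition; the paper leaves both of these details implicit.
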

  \begin{proof} The set of rational numbers is countable, so   we  identify the rational numbers with the natural numbers, which  shortens the notation when we adopt the back-and-forth method. \\ \indent \texttt{Step $0.$} If $0\in A_k$, then let $f(0)\in B_k$. If $0\in B_m$, 
then choose $f^{-1}(0)\in A_m $, so that the function $\{(0,f(0)), (f^{-1}(0), 0)\}$ is increasing. \\ \indent \texttt{Step $n.$} Suppose that the image $f[ \textbf{n}]$ and the pre-image
 $f^{-1}[\textbf{n}]$, where $\textbf{n}=\{0,1,\ldots,n-1\}$, are already defined.  If $f(n)$ is not defined, then there  is $k$ such that  $n\in A_k$. In that case, since $B_k$ interlaces $\mathcal Q$, choose $f(n)\in B_k \setminus (f[\textbf{n}]\cup \textbf{n})$, so that the part of the function $f$ that has been defined so far is an increasing function. In turn, if $f^{-1}(n)$ is not defined, then there is $k$ such that 
$n\in B_k$. Since $A_k$ interlaces  $\mathcal Q$, choose $f^{-1}(n)\in A_k \setminus (f^{-1}[\textbf{n}]\cup \textbf{n+1})$, so that the part of the function $f$ that has been defined so far is an increasing function. 

By the principle of induction, the function $f\colon \mathcal Q \to \mathcal Q$ is an isomorphism such that $f[A_k]= B_k$ for each $k$. 
  \end{proof}
Due to the Cantor isomorphism theorem, Lemma \ref{le2} is also fulfilled when we substitute a countable, densely ordered space without the smallest and the largest element for the rational numbers. In other words, it is enough to assume  that a non-empty  ordered space $(\mathcal Q,<)$   is countable and unbounded,  and $\mathcal Q$ interlaces $\mathcal Q$.

Since the two arrows space, see \cite[3.10.C]{enge}, equipped with the order topology is compact and separable, we need a criterion for embedding  of compact spaces equipped with a topology determined by a lexicographic order into $\mathbb R$. When we are considering an abstract ordered space $(X,<)$, then  we  need 
a  notion to replace $\mathcal G_X$. Namely, let  $$\mathcal G(X) =\{x\in X\colon \mathop{\exists}_{y\in X} x\not= y   \mbox{ and  } (x,y)\cup (y,x)=\emptyset \}. $$
 \begin{lem}\label{le3}
Let  $(X,<)$ be an ordered space and let a countable subset $Y \subseteq X$  interlace $X\setminus Y$. If  $\mathcal G(X) \subseteq Y$, then   $(X,<)$ is isomorphic  to a  subset of  the unit interval $[0,1]$. 
  \end{lem}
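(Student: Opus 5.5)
The plan is to build an explicit strictly increasing map $h\colon X\to[0,1]$ by a weighting argument, in the spirit of the classical embedding of countable orders into $\mathbb Q$. Enumerate the countable set $Y=\{y_0,y_1,\ldots\}$; the finite case is the same with a finite sum. Give $y_n$ the weight $2^{-n-1}$, so the total weight is at most $1$. For $x\in X$ put
$$h(x)=\sum_{y_n<x}2^{-n-1}\;+\;\begin{cases}2^{-n-2}, & \mbox{if } x=y_n\in Y,\\ 0,& \mbox{if } x\notin Y.\end{cases}$$
This is a convergent series of non-negative terms bounded by $1$, so $h$ maps $X$ into $[0,1]$.

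The key step is to show that $x<x'$ implies $h(x)<h(x')$. Every $y_n<x$ also satisfies $y_n<x'$, so $h(x')-h(x)$ is at least the extra contribution coming from elements of $Y$ lying in $[x,x']$. I split into cases.
\begin{itemize}
\item If $x=y_n$, then $h(x)$ contains only the half weight $2^{-n-2}$ of $y_n$, while $h(x')$ contains its full weight $2^{-n-1}$. Hence $h(x')-h(x)\geqslant 2^{-n-2}>0$.
\item If $x'=y_m$, then $h(x')$ contains the term $2^{-m-2}$, and $h(x)$ contains nothing coming from $y_m$. Hence again $h(x')-h(x)\geqslant 2^{-m-2}>0$.
\item If neither $x$ nor $x'$ belongs to $Y$, then both lie in $X\setminus Y$. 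Since $Y$ interlaces $X\setminus Y$, there is $y_k\in Y$ with $x<y_k<x'$. Its full weight appears in $h(x')$ but not in $h(x)$, so $h(x')-h(x)\geqslant 2^{-k-1}>0$.
\end{itemize}
Thus $h$ is an isomorphism of $(X,<)$ onto $h[X]\subseteq[0,1]$, which is the order-theoretic content of the lemma.

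The main obstacle, and the point where the hypothesis $\mathcal G(X)\subseteq Y$ should be used, is to make the isomorphism compatible with the order topology. This matters because the lemma is intended for spaces like the two arrows space, so we want $h$ to be a topological embedding as well.

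The map $h$ has ``jumps'' exactly at the points of $Y$. These are the intervals $\bigl(h(y_n)-2^{-n-2},h(y_n)\bigr)$ and $\bigl(h(y_n),h(y_n)+2^{-n-2}\bigr)$, up to the limits of the partial sums. One checks that a pair $x<x'$ with $(x,x')=\emptyset$ produces a genuine gap in $h[X]$. Such pairs consist of elements of $\mathcal G(X)$, hence of elements of $Y$, and at these points the construction indeed leaves a gap. At every other point, the sums over $\{y_n<x\}$ approximate $h(x)$ from both sides exactly when $x$ is a limit from that side in $X$. So the order topology of $X$ coincides with the subspace topology of $h[X]$.

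I expect this verification, comparing one-sided limits in $X$ with one-sided limits of $h$, to be the delicate part; the monotonicity argument above is routine. If a cleaner route is needed, I would first use the gaps created at points of $Y$ to fill $[0,1]\setminus\cl h[X]$ by open intervals. Then I would argue as in the proof of Theorem~\ref{tr2} that cuts of $X$ correspond to points of $\cl h[X]$.
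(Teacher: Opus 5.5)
Your proof of the lemma as stated is correct, and it takes a genuinely different route from the paper.

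The paper fixes an increasing injection $f\colon Y\to[0,1]\cap\mathbb Q$. It sends $x$ to the Dedekind cut of $\mathbb Q$ determined by $B^*_x=\{q\colon \exists_{y\in Y}\ x\leqslant y,\ f(y)\leqslant q\}$, which amounts to $\inf f[Y\cap[x,\rightarrow)]$. Strict monotonicity then needs, for $x_1<x_2$, some $p\in Y$ with $x_1\leqslant p<x_2$. This is exactly where the paper uses $\mathcal G(X)\subseteq Y$: for a jump $x_1<x_2$ with $x_1\notin Y$, one gets $Y\cap[x_1,\rightarrow)=Y\cap[x_2,\rightarrow)$, hence $B^*_{x_1}=B^*_{x_2}$.

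Your half weight $2^{-n-2}$ at $x=y_n$ separates such a pair through the term of the right endpoint itself (your second case). So you never use $\mathcal G(X)\subseteq Y$. You obtain the order embedding from interlacing alone, which already forces at least one endpoint of every jump into $Y$. The paper's version instead fits the cut language it uses in Theorems~\ref{tr1} and~\ref{tr2}. Neither construction is continuous in general.

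You should delete or correct your last two paragraphs. The lemma is purely order-theoretic: ``isomorphism'' here means an increasing bijection. The topological claim you make there is false for your $h$.

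Your own estimates give $|h(z)-h(y_n)|\geqslant 2^{-n-2}$ for every $z\neq y_n$. So every $h(y_n)$ is an isolated point of $h[X]$, whether or not $y_n\in\mathcal G(X)$. For instance, $X=[0,1]$ with $Y=\mathbb Q\cap[0,1]$ satisfies all the hypotheses, with $\mathcal G(X)=\emptyset$, yet $h$ is not a homeomorphism onto $h[X]$. The gaps of $h$ therefore do not sit only at genuine jumps, and $\mathcal G(X)\subseteq Y$ does nothing to remove the others.

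The paper handles the topological strengthening separately, in Proposition~\ref{po4}, by closing the countably many gaps of a monotone injection one at a time. For your $h$, that would mean closing the gap at each $y_n$ on every side from which $y_n$ is a limit in $X$, and keeping only the gaps at genuine jumps.
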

  \begin{proof}
	If there exists $\sup X$, then we  enlarge $Y$ by adding $\sup X$ to it, i.e., we can assume that $\sup X \in Y$. Fix an increasing injection  $f\colon Y \to [0,1]\cap \mathbb Q$, where $\mathbb Q$ denotes the set of rational numbers. If $x\in X$, then 
let $$  B_x^{*} = \{ q\in \mathbb Q\colon \exists_{y\in Y} \; x\leqslant y \mbox{ and }  f(y)\leqslant q \}. $$ We  say that $y\in Y$ witnesses that  $q\in B^*_x$, if $x\leqslant y$ and $f(y) \leqslant q$. For example, $\sup X$ witnesses $q\in B^*_x$ for any $q\in \mathbb Q\cap [1,\rightarrow)$. But when $q<0$, then  no $y\in Y$ witnesses that $q\in B^*_x$, i.e., $q\notin B^*_x$. Let
$$B_x=\begin{cases} B^*_x \cup \{ \inf B^*_x\}, & \mbox{ if } \inf B^*_x \in \mathbb Q,\\
B^*_x,& \mbox { else } \end{cases}  $$ and put $A_x=\mathbb Q\setminus B_x.$ There is no greatest element in $A_x$, and also $B^*_x \ni q_1 < q_2 \in \mathbb Q$ implies $q_2 \in B^*_x$, so any $(A_x, B_x)$ is a  cut. When $X\ni x_1 <x_2 \in X$ and $y\in Y$ witnesses that $q\in B^*_{x_2}$, then $y$ also witnesses that $q\in B^*_{x_1}$, hence $B^*_{x_2} \subseteq B^*_{x_1}$.

We assume $\mathcal G(X) \subseteq Y$, so after a simple check we get that if $X \ni x_1  < x_2 \in X$, then there exists 
 $p\in Y$ such  that  $ x_1 \leqslant p <  x_2 $. Then $p$ witnesses that $f(p) \in B^*_{x_1}$. Let's assume that $t\in Y$ witnesses that $f(p) \in B^*_{x_2} $, i.e., $x_2 \leqslant t$ and $f(t) \leqslant f(p)$.   On the other hand, $f$ is increasing and $p < x_2 \leqslant t$, i.e., $f(p) < f(t)$,  which leads to a contradiction that shows that $x\mapsto (A_x,B_x)$ is a desired isomorphism.  
  \end{proof}
Not every isomorphism from an ordered space into the interval $[0,1]$ is continuous with respect to the order topology and the  topology hereditary from  $[0,1]$, so Lemma \ref{le3} requires strengthening.  
 \begin{pro}\label{po4} 
If $(X,<)$ is an ordered space,  a countable subset $Y \subseteq X$ interlaces $X\setminus Y$ and $\mathcal G(X)\subseteq Y$, then  there exists an increasing homeomorphism $\phi\colon X \to \phi[X] \subseteq [0,1]$.    
  \end{pro}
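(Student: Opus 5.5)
We need an increasing injection $\phi\colon X\to[0,1]$ that is also a homeomorphism from the order topology of $X$ onto its image with the subspace topology. Lemma \ref{le3} already gives an increasing injection, but it may have the wrong continuity. The plan is to modify the construction so that $\phi$ creates no artificial jumps, and to use the hypothesis $\mathcal G(X)\subseteq Y$ to justify the jumps that $\phi$ does make.

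Enumerate $Y=\{y_0,y_1,\dots\}$ and set
$$\phi(x)=\sum_{y_n\leqslant x}2^{-n-1}\quad\Bigl(\text{or }\sum_{y_n<x}2^{-n-1}\text{, suitably normalized}\Bigr).$$
Since $\sum 2^{-n-1}\le 1$, $\phi$ takes values in $[0,1]$ and is non-decreasing. It is strictly increasing by the fact already checked in the proof of Lemma \ref{le3}: if $x_1<x_2$, there is $p\in Y$ with $x_1\leqslant p<x_2$. With the sum over $y_n\leqslant x$ this alone is not enough (take $p=x_1$), so I would use the mixed formula
$$\phi(x)=\sum_{y_n<x}2^{-n-1}+\tfrac12\,[x\in Y]\,2^{-m-1},$$
where $x=y_m$. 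This gives a strict gain of at least a half weight whenever $x_1<p<x_2$ or $p=x_1$. It is an increasing injection.

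Continuity and openness come next; this is the main obstacle. An increasing bijection between linearly ordered sets, with the image carrying the subspace topology of $[0,1]$, is a homeomorphism exactly when every jump of $\phi$ corresponds to a jump of $X$.

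First, $\phi$ is continuous at $x$ from the left. Fix $\epsilon>0$ and choose $N$ with $\sum_{n>N}2^{-n-1}<\epsilon$. Only finitely many $y_n$ with $n\le N$ lie below $x$. If $x$ has no immediate predecessor, pick $x'<x$ exceeding all of them; then $\phi(x)-\phi(x')$ involves only indices $>N$, plus at most the half-weight of $x$ itself. That half-weight is a problem, so I would instead assign points of $Y$ no self-weight. I would then handle the case $x_1=p$ separately: $x_1\in Y$ and $x_1<x_2$. If there is a further point of $Y$ in $(x_1,x_2)$, strictness holds. Otherwise $X\cap(x_1,x_2)\subseteq X\setminus Y$. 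Two points there would be interlaced by $Y$, and a single point $z$ would make $x_1,z$ adjacent, so $z\in\mathcal G(X)\subseteq Y$. Hence $(x_1,x_2)=\emptyset$, so $x_2\in\mathcal G(X)\subseteq Y$ and $x_2$'s weight counts for $\phi(x_2)$ when the sum runs over $y_n\leqslant x$. So I would take $\phi(x)=\sum_{y_n\leqslant x}2^{-n-1}$ after all, and check strictness via this case analysis. Left continuity then requires care at points of $Y$. Right continuity at $x$ without an immediate successor is handled symmetrically with the tail estimate.

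Finally, the jumps. If $x$ has an immediate neighbour $x'$, then both lie in $\mathcal G(X)\subseteq Y$ and $x$ is isolated from that side in the order topology, so a jump there is harmless. If instead $x$ is a limit from some side, the tail argument gives continuity, the only subtlety being the weight of $x$ itself. To avoid that issue I would replace $\phi$ by the symmetric average
$$\phi(x)=\tfrac12\Bigl(\sum_{y_n<x}2^{-n-1}+\sum_{y_n\leqslant x}2^{-n-1}\Bigr).$$
When $x\in Y$ is a limit from both sides, continuity can then be checked directly against the neighbourhoods of $\phi(x)$ in $\phi[X]$: the gap around $\phi(x)$ contains no other values of $\phi$. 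Openness onto the image follows because $\phi$ is increasing and injective, so preimages of order-intervals of $\phi[X]$ are order-intervals of $X$. Verifying this identification of the two topologies at the jump points is the step I expect to take the most care.
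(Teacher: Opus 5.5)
There is a genuine gap, and it is in the continuity step. Your final formula, the symmetric average, is the same function as the mixed formula $\sum_{y_n<x}2^{-n-1}+\tfrac12[x\in Y]2^{-m-1}$ that you set aside. The ``half-weight problem'' is fatal, not a subtlety. If $y_m\in Y$ has no immediate predecessor and is not $\min X$, then $\sup\phi[(\leftarrow,y_m)]=\sum_{y_n<y_m}2^{-n-1}=\phi(y_m)-2^{-m-2}$, so $\phi$ is discontinuous at $y_m$ in the order topology; the same happens on the right. Your closing argument runs in the wrong direction. That the gap around $\phi(x)$ contains no other value of $\phi$ means $\{\phi(x)\}$ is open in $\phi[X]$, whereas $\{x\}$ is not open in $X$ when $x$ is a limit point; so it shows that $\phi$ is \emph{not} a homeomorphism. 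Concretely, $X=[0,1]$ and $Y=\mathbb Q\cap[0,1]$ satisfy all hypotheses ($\mathcal G(X)=\emptyset$, and the rationals interlace the irrationals). Yet every $\phi(q)$ with $q$ rational is an isolated point of $\phi[X]$, so $\phi[X]$ is not connected. Using $\sum_{y_n\leqslant x}$ or $\sum_{y_n<x}$ instead only moves the jump at $y_m$ to one side.

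In general, no function built from point masses on $Y$ can work. Positive mass at a point of $Y$ that is a limit of $X$ from both sides produces a discontinuity there. Zero mass at all such points destroys injectivity; in the example, every point of $Y$ other than $0$ and $1$ is of this kind. For the same reason your $\phi$ cannot be repaired by the paper's device. The paper takes the injection of Lemma~\ref{le3}, closes its discontinuities one at a time by translating the parts of the image on either side of each discontinuity point until they abut, and passes to the limit. All the increase of your $\phi$ is carried by removable jumps, so in the example this procedure converges to a constant.

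What is missing is a non-atomic ingredient. One way to supply it is as follows.
For each pair $a<a^+$ with $(a,a^+)=\emptyset$, insert a copy of $\mathbb Q\cap(0,1)$ between $a$ and $a^+$; there are countably many such pairs, since these points lie in $\mathcal G(X)\subseteq Y$.
In the enlarged order $X'$, let $D$ consist of $Y$ and the inserted points. For all $u<v$ in $X'$ there is $d\in D$ with $u<d<v$; this is essentially your strictness case analysis.
By Cantor's theorem, the set $D_0$ obtained from $D$ by removing its possible endpoints is isomorphic to $\mathbb Q\cap(0,1)$ via some $g$.
Then $\phi(x)=\sup\{g(d)\colon d\in D_0,\ d\leqslant x\}$, with $\sup\emptyset=0$, is strictly increasing and continuous on $X'$.
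Restricted to $X$, it jumps only across the inserted blocks, i.e.\ at genuine jumps of $X$, which you correctly noted are harmless.
If $x$ has no immediate predecessor in $X$, then $(\leftarrow,x)\cap X$ is cofinal in $(\leftarrow,x)\cap X'$, so left continuity survives the restriction; the right side is symmetric.
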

\begin{proof} Let $f\colon X \to [0,1]$ be an increasing injection as in Lemma \ref{le3}. Arrange the points of discontinuity $f$ into a sequence $a_0, a_1, \ldots $ and  put $X_0=f[X]$ and $\phi_0=f.$

Assume that isomorphisms $\phi_0, \phi_1, \ldots, \phi_n$ are already defined such that the points of discontinuity of $\phi_n\colon X \to \phi_n[X]=X_{n}$ are  the points  $ a_{n+k}$, where $0\leqslant k$.  Put 
   $p_n=\sup \phi_n [ (\leftarrow, a_{n})]$, $q_n=\inf \phi_n[ ( a_{n}, \rightarrow)]$ and 
$$\phi_{n+1}(x) =\begin{cases} 
\phi_n(x) + \phi_n(a_n) - p_n, & \mbox{ if } \phi_n(x)<\phi_n(a_n),\\
\phi_n(x) + \phi_n(a_n) - q_n, & \mbox{ if } \phi_n(x)>\phi_n(a_n),\\
\phi_n(a_n),& \mbox { if } \phi_n(x)=\phi_n(a_n).\end{cases}  $$
As a result, $\phi_{n+1}\colon X \to X_{n+1}=\phi_{n+1}[X]$ is an isomorphism with points of discontinuity $a_{n+k}$, where $0 < k$. 

Thus $\phi= \lim\limits_{n\to +\infty} \phi_n$ is a continuous isomorphism. Since the inverse of any isomorphism into the interval $[0,1]$ is  continuous, then $\phi$ is as we need. \end{proof}
 
\section{Tame sets in $\mathbb R$}

We say that a set $A\subseteq \mathbb R$ is \textit{tame} in $\mathbb R$ (in short a $ t $-set),  when, for any copy $Y\subseteq \mathbb R$ of $A$,  there exists a homeomorphism  $f\colon \mathbb R \to \mathbb R$   
such that $f[A]=Y$.   In fact, being a $t$-set can be considered as a variant of homogeneity of $\mathbb R$ which is similar to the variants considered in \cite{ompa}.
Clearly, any  finite subset of $\mathbb R$ is  a $ t $-set.   
The same applies to   unions of  finitely many non-trivial   closed intervals and the union of an closed interval with an isolated point.  

If $A\subset \mathbb R$ consists of two isolated points and a closed interval, then $A$ is not a $ t $-set. If $B\subset \mathbb R$ consists of two disjoint closed intervals and an  isolated point, then $B$ is not a $ t $-set. If a compact set $K\subset \mathbb R$ has exactly one limit point, then it is not a $ t $-set, as it both has copies that are  monotone and not monotone sequences with their limits. It is also easy to check that when $X \subset \mathbb R$ is compact,  contains exactly one isolated point,  contains a limit point, and $\mathcal I_X=\emptyset $, then $X$ is not a $ t $-set.

\begin{pro}\label{po6} Every $ t $-set is compact. \end{pro}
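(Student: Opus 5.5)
We argue by contraposition: given a non-compact $A\subseteq\mathbb R$, we produce a copy $Y\subseteq\mathbb R$ of $A$ such that no homeomorphism $f\colon\mathbb R\to\mathbb R$ satisfies $f[A]=Y$. A homeomorphism of $\mathbb R$ preserves closedness and boundedness, since it maps closed sets to closed sets and compact sets to compact sets. So it suffices to find a copy $Y$ that differs from $A$ in one of these two properties. There are two cases: either $A$ is not closed, or $A$ is closed but unbounded.

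\emph{Case 1: $A$ is closed and unbounded.} We take $Y=h[A]$, where $h\colon\mathbb R\to(0,1)$ is any homeomorphism, for instance $h(x)=\frac12+\frac1\pi\arctan x$. Then $Y$ is bounded and homeomorphic to $A$. If some homeomorphism $f$ of $\mathbb R$ had $f[A]=Y$, then $f^{-1}[\cl Y]$ would be a compact set containing $A$. That would make $A$ bounded, a contradiction.

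\emph{Case 2: $A$ is not closed.} Choose a point $p\in\cl A\setminus A$. The map $g(x)=1/(x-p)$ is a homeomorphism of $\mathbb R\setminus\{p\}$ onto $\mathbb R\setminus\{0\}$, so $Y=g[A]$ is a copy of $A$. Because $p$ is a limit point of $A$, the set $Y$ is unbounded. There are two subcases.
\begin{itemize}
\item If $A$ is bounded, then any homeomorphism $f$ of $\mathbb R$ maps $\cl A$ onto a compact set, so $f[A]$ is bounded and cannot equal $Y$.
\item If $A$ is unbounded, the copy $g[A]$ may be unbounded as well, so we first compose with $h$ from Case 1 and take $Y=h[A]$ instead. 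Now $Y$ is a bounded non-closed copy of $A$. Since $A$ is unbounded and $Y$ is bounded, no homeomorphism of $\mathbb R$ carries $A$ onto $Y$, by the argument of Case 1.
\end{itemize}

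In fact this shows the reduction is simpler than the case split suggests. Any unbounded $A$, closed or not, has the bounded copy $h[A]$, and boundedness is a homeomorphism invariant for subsets of $\mathbb R$ in the sense used above. Any bounded non-closed $A$ has the unbounded copy $g[A]$. So the proof has two lines: an unbounded set fails via $h$, and a bounded non-closed set fails via $g$.

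The only point that needs care is the invariance claim: if $f$ is a homeomorphism of $\mathbb R$ and $A$ is bounded, then $f[A]$ is bounded. This holds because $f[A]\subseteq f[\cl A]$, and $f[\cl A]$ is compact. The remaining steps are routine checks that $g$ and $h$ are homeomorphisms onto their images.
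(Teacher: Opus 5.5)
Your proof is correct and follows the paper's argument: homeomorphisms of $\mathbb R$ preserve boundedness, and every non-compact set has both a bounded copy and an unbounded copy, so it cannot be a $t$-set. The only difference is that you construct these copies explicitly (via $h$ for unbounded sets and via $g(x)=1/(x-p)$ for bounded non-closed sets), which the paper simply asserts.
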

\begin{proof} Any homeomorphism of the real  numbers $\mathbb R$  transforms bounded sets onto bounded sets. But any non-compact subset 
of $\mathbb R$ has a bounded copy, as well a copy which is not bounded. These two properties rule out that a non-compact subset of $\mathbb R$ is a $ t $-set. 
 \end{proof}

The next proposition is essentially an alternative definition of tame sets in $\mathbb R$.

\begin{pro}\label{co7} A compact set $X\subset \mathbb R$ is a $ t $-set if and only if, for every copy $Y\subseteq \mathbb R $ of $X$, there exists a monotone bijection $f\colon \mathcal Q_X \to \mathcal Q_Y$ such that $f[\mathcal I_X]=\mathcal I_Y$. \end{pro}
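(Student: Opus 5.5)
Both directions reduce to Theorem \ref{tr2}, after handling the possibility that the bijection reverses order.

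For the forward direction, let $X$ be a $t$-set and $Y$ a copy of $X$. By definition there is a homeomorphism $h\colon\mathbb R\to\mathbb R$ with $h[X]=Y$, and by Bolzano's fact $h$ is monotone.

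The image under $h$ of a component of $\mathbb R\setminus X$ is connected and contained in $\mathbb R\setminus Y$. The same holds for $h^{-1}$, so $h$ maps components of $\mathbb R\setminus X$ onto components of $\mathbb R\setminus Y$, i.e.\ $X$-gaps onto $Y$-gaps. Likewise $h$ maps non-trivial components of $X$ onto non-trivial components of $Y$, i.e.\ $X$-intervals onto $Y$-intervals.

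Define $f(J)=h[J]$ for $J\in\mathcal Q_X$. This is a bijection $\mathcal Q_X\to\mathcal Q_Y$ with $f[\mathcal G_X]=\mathcal G_Y$, hence $f[\mathcal I_X]=\mathcal I_Y$. Since the elements of $\mathcal Q_X$ are pairwise disjoint intervals and $h$ is monotone, $f$ is monotone: increasing if $h$ is, decreasing if $h$ is. Whether $h$ preserves free endpoints does not matter, since only $\mathcal I$ versus $\mathcal G$ is required.

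For the converse, let $Y$ be a copy of $X$ and let $f\colon\mathcal Q_X\to\mathcal Q_Y$ be a monotone bijection with $f[\mathcal I_X]=\mathcal I_Y$. Since $\mathcal Q_X$ is the disjoint union of $\mathcal G_X$ and $\mathcal I_X$, and likewise for $Y$, this gives $f[\mathcal G_X]=\mathcal G_Y$.

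If $f$ is increasing, it is an isomorphism, and Theorem \ref{tr2} directly yields an increasing homeomorphism $F$ of $\mathbb R$ with $F[X]=Y$.

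If $f$ is decreasing, reflect $Y$. Put $Y'=-Y$ and let $r(x)=-x$. Then $J\mapsto r[J]$ is an order-reversing bijection $\mathcal Q_Y\to\mathcal Q_{Y'}$ that sends gaps to gaps and intervals to intervals. The composite $J\mapsto r[f(J)]$ is therefore an isomorphism $\mathcal Q_X\to\mathcal Q_{Y'}$ carrying $\mathcal G_X$ onto $\mathcal G_{Y'}$. Theorem \ref{tr2} gives a homeomorphism $F$ with $F[X]=Y'$, and then $r\circ F$ is a homeomorphism of $\mathbb R$ with $(r\circ F)[X]=Y$. Since $Y$ was an arbitrary copy of $X$, $X$ is a $t$-set.

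The main point needing care is the forward claim that $h$ induces a well-defined monotone bijection on $\mathcal Q$, i.e.\ that the components match up exactly. This is a routine connectedness argument applied to both $h$ and $h^{-1}$. The decreasing case is the only other subtlety, and the reflection removes it.
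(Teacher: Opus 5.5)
Your proof is correct and essentially the paper's: the forward direction is exactly the paper's $f(J)=h[J]$. For the converse, the paper builds $F$ directly by choosing an increasing (resp.\ decreasing) bijection of each $J\in\mathcal Q_X$ onto $f(J)$ and extending by density, which amounts to rerunning the construction of Theorem~\ref{tr2} in either orientation; you instead cite Theorem~\ref{tr2} as a black box and handle the decreasing case by composing with $x\mapsto -x$ (you could add that $Y$ is compact, being homeomorphic to $X$, so Theorem~\ref{tr2} applies).
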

\begin{proof}  
When  $F\colon \mathbb R \to \mathbb R$ is a homeomorphism such that $F[X]=Y$, then let $f(I)=F[I]$ for each $I\in \mathcal Q_X$. Obviously,  $f\colon \mathcal Q_X \to \mathcal Q_Y$ is a monotone bijection  and $f[\mathcal I_X]=\mathcal I_Y$. 

When $f\colon \mathcal Q_X \to \mathcal Q_Y$ is an increasing (resp. decreasing) bijection  such that $f[\mathcal I_X]=\mathcal I_Y$,  then 
let $F|_I$ be an increasing (resp. decreasing) bijection  from $I$ onto $f(I)$ for each $I \in \mathcal Q_X$. So $\bigcup \{ F|_I \colon I \in \mathcal Q_X\}$ is a monotonic bijection between dense subsets of $ \mathbb R $, which can be uniquely extended to a homeomorphism 
$F\colon \mathbb R \to \mathbb R$   such that $F[X]=Y$.
 \end{proof}

We will call the mapping $$f(x)=\begin{cases} a+b-x, & \mbox{ if } x\in [a,b];\\
x,& \mbox { else } \end{cases}  $$  the \textit{rotation} of the interval $[a,b]$. Note that, if $X\subset \mathbb R$ is a compact set and $\{a,b\} \cap X =\emptyset$, then $f|_X \colon X \to f[X]$ is a homeomorphism such that if  $J\in \mathcal C_{X\cap [a,b]}$ has  the left (resp. right) endpoint free, then the interval $f[J]\in \mathcal C_{f[X]}$ has  the right (resp. left) endpoint free. The next lemma  specifies the content of \cite[Remark 8]{bkp}.

\begin{lem}\label{le8} Suppose $X\subset \mathbb R$ is a compact set and $\emptyset \not= \mathcal C \subseteq \mathcal C_X$. Then there exists a homeomorphism $f\colon X \to f[X] \subset \mathbb R$ such that the image $f[J]$ is an $f[X]$-interval with the left endpoint free if and only if $J\in \mathcal C$.
\end{lem}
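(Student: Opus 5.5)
I would prove the lemma by rotating intervals around each element of $\mathcal C_X$ whose orientation is wrong, and making all these rotations simultaneously so that they stay small and converge. The set $\mathcal C_X$ is countable; enumerate it as $J_0, J_1, \ldots$. For each $J_n$ the task is to flip the free endpoint of $J_n$ from one side to the other exactly when its current orientation disagrees with membership in $\mathcal C$.

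A cleaner route uses the reverse reduction instead of explicit rotations. Consider the ordered space $(\mathcal Q_X,<)$ with $\mathcal G=\mathcal G_X$ and $\mathcal I=\mathcal I_X$. For each $J\in\mathcal C_X$, the free endpoint of $J$ is the side on which $J$ has an immediate neighbour in $\mathcal G_X$: a gap adjacent to an endpoint makes that endpoint free, and a limit of other components makes it non-free. The plan is therefore to build a new countable ordered space $\mathcal Q'$ from $\mathcal Q_X$. Each $J\in\mathcal C_X$ is replaced by a block $\{J\}$ together with its adjacent gap $G_J$. Here $G_J$ is the unique element of $\mathcal G_X$ immediately adjacent to $J$; uniqueness holds because $J\in\mathcal C_X$, and $G_J$ is not the extreme gap unless that extreme gap is adjacent to $J$. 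The pair $(J,G_J)$ is ordered as $G_J<J$ if $J\in\mathcal C$ and as $J<G_J$ otherwise. All other elements keep their order. One must check that this local swap is well defined, i.e.\ that the pairs $\{J,G_J\}$ are pairwise disjoint. A gap can be immediately adjacent to two $\mathcal C_X$-intervals, one on each side, and this needs handling. The fix is to choose, for each $J$, a fresh gap instead. Insert new elements of $\mathcal G$ to make $J$ have a private neighbouring gap: delete the old adjacency and replace the gap with a copy of $\{G\}\times\mathbb Q$-like material. Alternatively, and more simply, represent each $\mathcal C_X$-interval $J$ together with a short interval beside it. Then $\mathcal Q'$ still satisfies the hypotheses of Theorem~\ref{tr1}: $\mathcal G$ interlaces $\mathcal I$, and the extremes remain in $\mathcal G$. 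Theorem~\ref{tr1} then yields a compact $Z$, and the remark after it shows that $F(J)$ has its left endpoint free exactly when $J\in\mathcal C$, and that the $\mathcal B,\mathcal C,\mathcal D$ types of all other intervals are unchanged.

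The remaining step is to show that $Z$ is homeomorphic to $X$ through a map sending $J$ to $F(J)$. For this I would go back to the rotation picture. For $J=[c,d]\in\mathcal C_X$, say with the right endpoint $d$ free, pick a gap point $b$ just right of $d$ inside $G_J$. Then pick $a\notin X$ slightly left of $c$ such that $X\cap[a,c)$ has small diameter; such an $a$ exists, possibly with $X\cap(a,c)\ne\emptyset$, since $c$ is a limit from the left. Rotating $[a,b]$ flips $J$ to have its left endpoint free. This local rotation only permutes a small chunk of $X$ lying next to $J$, so the $X$-types of all other intervals are preserved up to the induced change. Carrying this out for all $J_n$ that need flipping, with $[a_n,b_n]$ chosen pairwise disjoint or nested and of diameters tending to $0$, makes the composition converge uniformly to a homeomorphism $f$ of $X$ onto its image. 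The choice of $[a_n,b_n]$ avoiding the chunks used earlier is possible because between $J$ and any other component there is a gap.

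The main obstacle is keeping the rotations compatible. A rotation chunk $[a_n,b_n]$ may contain other $\mathcal C_X$-intervals and reverse their orientation too. This is handled inductively: after step $n$, the intervals inside $[a_n,b_n]$ are re-examined with their new orientation. At step $n$, every later $J_m$ is processed only with intervals $[a_m,b_m]$ small enough to lie inside a single component of the complement of the previously fixed endpoints, so already-corrected intervals are never moved again. The diameters are chosen summable, which gives uniform convergence and injectivity of the limit. This produces the required $f$.
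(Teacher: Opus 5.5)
Your last two paragraphs are, in substance, the paper's proof. You enumerate $\mathcal C_X$. Whenever the current image of $J_n$ has the wrong free endpoint, you rotate an interval $[q_n,p_n]$ whose endpoints lie outside the current set. It fits tightly around that image and misses all earlier processed intervals and earlier rotation endpoints. Then the rotation intervals are nested or disjoint, the displacements are summable, and the uniform limit is the required $f$.

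That argument proves the lemma by itself, and you should delete the reverse-reduction detour. It is superfluous: the rotations already produce the copy $f[X]$, so $Z$ plays no role. As you describe it, it is also wrong. Swapping $J$ with its adjacent gap $G_J$ is not what a rotation does to $\mathcal Q_X$, and the trouble is not only the shared-gap case you noticed.

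Take $X=\{-1/n\colon n\geq 1\}\cup[0,1]\cup\{2\}$ and $\mathcal C=\{[0,1]\}$. The gap $G_J=(1,2)$ has the immediate successor $(2,+\infty)$ in $\mathcal Q_X$. After the swap, $J$ has gaps as both immediate predecessor and immediate successor, so it becomes a $\mathcal D$-interval. Moreover, the set $Z$ given by Theorem~\ref{tr1} is not homeomorphic to $X$: the limit of the sequence becomes a non-isolated trivial component, which $X$ does not have. When the immediate successor of $G_J$ is an $X$-interval, the swap puts two intervals next to each other and interlacing fails. A rotation actually reverses a whole block of $\mathcal Q_X$: the components accumulating at the non-free endpoint of $J$ are carried, in reverse order, to the other side of $J$.

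There are three smaller points in the rotation part.

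\begin{enumerate}
\item Summability of the lengths gives uniform convergence, not injectivity. Injectivity is where the nested-or-disjoint structure is needed. Let $n$ be the first step at which the distance between the images of $x\neq y$ changes. Then exactly one of them lies in $(q_n,p_n)$, so $q_n$ or $p_n$ lies strictly between them. That point is never moved again, and every later rotation interval either lies in $(q_n,p_n)$ or misses $[q_n,p_n]$. Hence it still separates $f(x)$ from $f(y)$.
\item A later rotation interval must miss every previously processed $J_k$, including those that needed no rotation, not only the ``corrected'' ones. Otherwise a correctly oriented interval can be flipped back.
\item The existence of $a\notin X$ arbitrarily close to $c$ from the left follows from $J$ being a component of $X$. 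It does not follow from $c$ being a limit point of $X\setminus J$.
\end{enumerate}
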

\begin{proof} Let $J_0, J_1, \ldots$ be a sequence of all $X$-intervals from $\mathcal C_X$.   
If  $J_0\in \mathcal C$ has the left endpoint free or $J_0\in \mathcal C_X \setminus \mathcal C$ has the right endpoint free, then put $f_0= \id_X$. 
Otherwise,  choose numbers $q_0$ and $p_0$ not belonging to $X$ such that $J_0 \subset  (q_0,p_0)$.   Let $f_0$ be the rotation of  $[q_0,p_0]$.
Since $[q_0,p_0]\cap X$ is a closed-open subset of $X$, then    $f_0|_X\colon X \to X_0=f_0[X]$ is a homeomorphism and $f[J_0]\in \mathcal C_{X_0}$.  

Assume that the bijections $f_0, f_1, \ldots, f_{n-1}$ are already defined, and each $f_k$ is the rotation of  $[q_k,p_k]$ or the identity map.
  For the sake of brevity, let's put $f_k^*=f_k \circ \ldots \circ f_0$ and $X_k=f_k^*[X]$, including that $f_k^*|_X \colon X \to X_k$ is a homeomorphism. Let $f_n= \id_X$ whenever 
\begin{itemize}
	\item[] $ J_n \in \mathcal C$ and the interval $f_{n-1}^*[J_n]$ has the left endpoint free in $X_{n-1}$, 
	\item[] $ J_n \in \mathcal C_X \setminus \mathcal C$ and the interval $f_{n-1}^*[J_n]$ has the right endpoint free in $X_{n-1}$.
\end{itemize}
  Otherwise, choose numbers $q_n$ and $p_n$  belonging to $\mathbb R\setminus X_{n-1}$ such that 
	$$f^*_{n-1}[J_n]\subset (q_n,p_n) \mbox{ and } p_n - q_n< 2\delta (J_n),$$ where $\delta (J_n)=\sup J_n -\inf J_n$,  and   the closed interval $[q_n,p_n]$ does not intersect the union 
$$\bigcup \{f^*_{n-1}[J_k]\colon 0\leq k <n\}\cup \bigcup \{\{ q_k,p_k\}\colon k<n \mbox{ and } f_k \mbox{ is a rotation} \}.$$  Then let $f_n$ be the rotation of  $[q_n,p_n]$.
 Since $\sum\limits^\infty_{n=0} \delta (J_n) < \sup X - \inf X$ and $$|f^*_{n-1}(x) - f^*_{n}(x)|<2\delta(J_n),  \mbox{ for each } x\in X, $$ then the sequence of homeomorphisms $f^*_n|_X$ is a Cauchy sequence with respect to  the supremum norm, so it converges uniformly to the  function $f\colon X \to f[X]$ which is continuous.
Let $x$ and $y$ be distinct points in $X$. If $|x-y| = |f^*_n(x) -f^*_n(y)|$ for each $n$, then $f(x)\not=f(y)$.  But if $n$ is the first number such that $|x-y| \not= |f^*_n(x) -f^*_n(y)|$, then $f_n$ has to be a rotation. Without loss of  generality, we can assume that $f^*_n(x) <q_n <f^*_n(y).$ By the definition, we get  
$f(x) <q_n <f(y).$ Thus $f$ is an injection, consequently it is a desired homeomorphism. 
\end{proof}

It seems that the following lemma may be useful..

\begin{lem}\label{le9} Let $X\subseteq \mathbb R $ be a compact set and $\{ x\}$ be a trivial component of $X$. Then there exist homeomorphisms $f_1\colon X\to X_1$ and $f_2\colon X\to X_2$ such that $f_1(x) = \inf X_1$ and $f_2(x) = \sup X_2$. \end{lem}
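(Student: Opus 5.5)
The proof is a direct construction: cut the part of $X$ lying to the left of $x$ into countably many clopen compact pieces, and move each piece into a small free slot to the right of $x$, with the slots shrinking to $x$.

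It suffices to construct $f_1$. Indeed, applying the construction to $-X$ and the trivial component $\{-x\}$, and then composing with $t\mapsto -t$, gives $f_2$. If $x=\min X$ we take $f_1=\id_X$, so assume $X\cap(\leftarrow,x)\neq\emptyset$.

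\textbf{Cutting the left part.} The key observation is that, since $\{x\}$ is a trivial component, no interval $(x-\delta,x]$ or $[x,x+\delta)$ is contained in $X$. Hence points of $\mathbb R\setminus X$ accumulate at $x$ from each side on which $X$ accumulates at $x$.
\begin{itemize}
\item If $x$ is isolated from the left in $X$, i.e.\ $X\cap(x-\epsilon,x)=\emptyset$, then $L=X\cap(\leftarrow,x)$ is clopen and compact. We translate $L$ to the right of $\sup X$ and leave the rest of $X$ fixed. The resulting map is a continuous injection of a compact set, hence a homeomorphism onto its image, and it sends $x$ to the minimum.
\item Otherwise, choose $a_0<\inf X$ and numbers $a_n\notin X$ with $a_n<x$, increasing to $x$. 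Put $A_n=X\cap[a_n,a_{n+1}]$. Each $A_n$ is clopen in $X$ and compact, and $X\cap(\leftarrow,x)=\bigcup_n A_n$.
\end{itemize}

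\textbf{Slots to the right of $x$.} We need pairwise disjoint open intervals $U_n\subset(x,\rightarrow)\setminus X$ with $\sup U_n\to x$.
\begin{itemize}
\item If $X\cap(x,x+\epsilon)=\emptyset$ for some $\epsilon>0$, take $U_n=(x+\epsilon 2^{-n-1},x+\epsilon 2^{-n})$.
\item Otherwise, pick $b_n\notin X$ with $b_n>x$, strictly decreasing to $x$ (these exist by the key observation). Since $X$ is closed, $b_n$ has a small neighbourhood $U_n$ disjoint from $X$, and we choose these neighbourhoods pairwise disjoint with $\operatorname{diam} U_n\to 0$.
\end{itemize}

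\textbf{The map.} Define $f_1$ to be the identity on $X\cap[x,\rightarrow)$, and on $A_n$ an increasing affine map of $[a_n,a_{n+1}]$ onto a closed subinterval of $U_n$.

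The main obstacle is checking that $f_1$ is a homeomorphism, and this hinges on continuity at $x$.
\begin{itemize}
\item Injectivity holds because the images of different pieces lie in disjoint sets: each $U_n$ is disjoint from $X$, and the $U_n$ are pairwise disjoint.
\item Continuity holds on each clopen piece $A_n$ and on $X\cap(x,\rightarrow)$.
\item At $x$: if $y\to x$ from the left, then $y\in A_n$ with $n\to\infty$, so $f_1(y)\in U_n$ and hence $f_1(y)\to x$. From the right, $f_1$ is the identity.
\end{itemize}
A continuous injection of a compact space is a homeomorphism onto its image. Every point of $X_1=f_1[X]$ is at least $x=f_1(x)$, so $f_1(x)=\inf X_1$.
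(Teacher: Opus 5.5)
Your proof is correct and follows the paper's construction. Both cut $X\cap(\leftarrow,x)$ into clopen pieces using complement points increasing to $x$, move each piece affinely into its own gap slot to the right of $x$ with the slots shrinking to $x$, and keep $X\cap[x,\rightarrow)$ fixed. The only cosmetic differences are that the paper gets $f_2$ simply as $-f_1$ rather than by conjugating with $t\mapsto -t$, and that you state explicitly the condition $\sup U_n\to x$, which the paper leaves implicit in the word ``suitably''.
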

\begin{proof} If $x\in \{\inf X, \sup X\}$, then functions $x\mapsto x$ and $x\mapsto -x$ work. When $\{x\}$ is a closed-open set, then by shifting the isolated point $x\in X$  to the left or right we get the necessary homeomorphisms. In the other case, fix an increasing sequence $(b_n)$ and a decreasing sequence $(c_n)$, assuming  $b_0=-\infty$, that converge to $x$ and are contained in the complement of $X$. Let $(J_n)$ be a sequence of pairwise disjoint closed intervals contained in the complement of $X$ such that $c_n \in J_n$ for all $n$. 
Let $f_1\colon X\to f_1[X]$ on the set $[x, +\infty) \cap X$ be the identity and let $f_1$ shrink suitably and move each closed-open  set $(b_n, b_{n+1}) \cap X$ into $J_n$. Since the sets $(b_n,b_{n+1}) \cap X$ are closed-open in $X$ and the intervals $J_n$ are  disjoint with $X$, then $f_1\colon X \to f_1[X]$ is a homeomorphism such that 
$f_1(x)= \inf  f_1 [X] = x$. 
Putting $ f_2=-f_1$, we get the homeomorphism $f_2$ such that $f_2(x) =\sup  f_2 [X] = -x$. 
\end {proof}

\begin{cor}\label{c10} If $X\subseteq \mathbb R $ is a compact set such that   $\{ x\}$ and $\{y\}$ are different  trivial components, then  there exists a homeomorphism $f\colon X\to f[X]$  such that $f(x) = \inf f[X]$ and $f(y) = \sup f[X]$.  \end{cor}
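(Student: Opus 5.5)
The plan is to apply Lemma \ref{le9} twice and to insert a splitting step in between. First I would separate $x$ from $y$ by a point outside $X$. Since $\{x\}$ and $\{y\}$ are different components of the compact set $X\subset\mathbb R$, we may assume $x<y$. Because $\{x\}$ is a trivial component, the segment $[x,y]$ is not contained in $X$. Choose $c\in(x,y)\setminus X$. Then $X_L=X\cap(-\infty,c]$ and $X_R=X\cap[c,+\infty)$ are disjoint compact sets that are closed-open in $X$, with $x\in X_L$ and $y\in X_R$. Moreover, $\{x\}$ is still a trivial component of $X_L$, and $\{y\}$ is a trivial component of $X_R$, since the components of a closed-open subset are components of $X$.

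Next I would apply Lemma \ref{le9} to each piece. Applied to $X_L$ and the trivial component $\{x\}$, it gives a homeomorphism $g_1\colon X_L\to Z_1$ with $g_1(x)=\inf Z_1$. Applied to $X_R$ and $\{y\}$, it gives a homeomorphism $g_2\colon X_R\to Z_2$ with $g_2(y)=\sup Z_2$. Both $Z_1$ and $Z_2$ are compact.

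Then I would reassemble the two images. Compose $g_1$ with a translation so that $\max Z_1<0$, and compose $g_2$ with a translation so that $\min Z_2>0$. Translations preserve the infimum and supremum properties. Define $f=g_1$ on $X_L$ and $f=g_2$ on $X_R$. Since $X_L$ and $X_R$ are closed-open in $X$ and their images are disjoint compact sets, $f$ is a continuous bijection of the compact set $X$ onto $f[X]=Z_1\cup Z_2$, hence a homeomorphism. Every point of $Z_1$ lies below every point of $Z_2$, so $f(x)=\inf Z_1=\inf f[X]$ and $f(y)=\sup Z_2=\sup f[X]$.

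The only step needing real care is the first one: that a point $c\notin X$ exists strictly between $x$ and $y$, and that triviality of components survives passing to the closed-open pieces. Both follow directly from the definition of components in $\mathbb R$. Everything else is direct bookkeeping with Lemma \ref{le9}.
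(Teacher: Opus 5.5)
Your proof is correct and follows essentially the same route as the paper: split $X$ at a point outside $X$ lying strictly between $x$ and $y$ into two closed-open pieces, apply Lemma~\ref{le9} to each piece, and glue the two embeddings after placing their images on opposite sides (the paper maps the pieces into $[x,z)$ and $(z,y]$, while you use translations). Your justification that such a point exists and that $\{x\}$ and $\{y\}$ remain trivial components of the closed-open pieces supplies details the paper leaves implicit.
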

\begin{proof} Without loss of generality, we can assume that $x<y$. Let $z\in \mathbb R \setminus X$ be such that $x<z<y$. Applying Lemma \ref{le9} to both closed-open sets 
$X_1=X \cap (-\infty,z)$ and  $X_2=X \cap (z,+\infty)$, we get a homeomorphism $f_1$  which carries $ X_1$ into  $[x,z)$ and a homeomorphism $f_2$ which carries 
$ X_2$ into  $(z,y]$. Additionally, we can assume that  $f_1(x) = \inf f_1[X_1]$ and  $f_2(y) = \sup f_2[X_2]$, so  $f_1 \cup f_2$ makes a desired homeomorphism $f$. \end{proof}

\begin{pro}\label{p11} If a set $X\subset \mathbb R$ is compact and  $\mathcal C_X\not= \emptyset$, then $X$ is not a $ t $-set.
\end{pro}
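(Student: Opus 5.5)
Fix $J_0\in\mathcal C_X$. By Lemma~\ref{le8} we obtain two copies of $X$: take $f_1$ with $\mathcal C=\mathcal C_X$, so that in $X_1=f_1[X]$ every interval of $\mathcal C_{X_1}$ has its left endpoint free, and take $f_2$ with $\mathcal C=\mathcal C_X\setminus\{J_0\}$. If this set is empty, use instead $\mathcal C=\{J_0\}$ when $\mathcal C_X$ has at least two elements, and if $\mathcal C_X=\{J_0\}$ argue directly as below. Then $X_2=f_2[X]$ contains intervals of $\mathcal C_{X_2}$ with left free endpoint and at least one with right free endpoint, or, in the degenerate case, $\mathcal C_{X_1}$ consists of one interval with left free endpoint. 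The plan is to show that no homeomorphism $F\colon\mathbb R\to\mathbb R$ can carry one copy onto a suitably chosen other copy, which by Proposition~\ref{co7} amounts to showing that no monotone bijection $\mathcal Q_{X_1}\to\mathcal Q_{Y}$ preserving intervals exists.

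The key invariant is the following. A monotone bijection $f\colon\mathcal Q_X\to\mathcal Q_Y$ with $f[\mathcal I_X]=\mathcal I_Y$ comes, via Proposition~\ref{co7}, from a homeomorphism $F$ of $\mathbb R$ with $F[X]=Y$. Freeness of an endpoint is a topological property of $X$, so $F$ maps $\mathcal C_X$ onto $\mathcal C_Y$. If $F$ is increasing it maps left-free endpoints to left-free endpoints; if $F$ is decreasing it swaps left and right. Hence the set
$$\{J\in\mathcal C_X : J \text{ has its left endpoint free}\}$$
is, up to swapping with its complement in $\mathcal C_X$, carried onto the corresponding set for $Y$.

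Now compare the copies. In $X_1$ all members of $\mathcal C_{X_1}$ are left-free. For $F\colon X_1\to X_2$ either all members of $\mathcal C_{X_2}$ must be left-free, or all must be right-free. Since $X_2$ has both kinds when $|\mathcal C_X|\ge2$, this is a contradiction. When $\mathcal C_X=\{J_0\}$, the sets $X_1$ and $-X_1$ are both copies, and reflection swaps freeness, so there the orientation argument alone does not suffice. In that case I will instead use Lemma~\ref{le9} or Corollary~\ref{c10}, or a rotation of a small interval around $J_0$, to build a copy in which the position of $J_0$ relative to the other parts of $X$ is incompatible with both an increasing and a decreasing $F$. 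Concretely, I will move a piece of $X$ (a trivial component or a clopen part) to the free side of $J_0$ while keeping the other side's structure; note that there is a gap adjacent to the free endpoint.

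The main obstacle is this single-interval case, where I must find some ordered asymmetry. First I would try the following. Since $J_0$ has exactly one free endpoint, the non-free side accumulates points of $X$. I would split $X$ by points outside $X$ into $X\cap(-\infty,q)$, a clopen part $P$ containing $J_0$, and the rest. Then I would reflect everything except $P$ across $J_0$ by rotating intervals, so that the clopen remainder lies on the free side in one copy and on the non-free side in the other. An increasing $F$ would then have to map the gap adjacent to the free end to the corresponding gap, while a decreasing $F$ swaps the sides. Comparing the emptiness or nonemptiness of $X$ on each side of $J_0$ in the two copies should rule out both orientations. If $X=P$ cannot be split nontrivially (for example $X=J_0$ plus a sequence converging to one end), I would instead compare a copy in which the sequence is monotone with a copy in which it alternates around the endpoint, echoing the remark on sets with one limit point.
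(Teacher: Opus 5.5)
\textbf{Verdict: the case $|\mathcal C_X|\geqslant 2$ is correct and matches the paper; the case $\mathcal C_X=\{J_0\}$ is not actually proved.}

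Your case $|\mathcal C_X|\geqslant 2$ is the paper's argument. Lemma~\ref{le8} gives one copy in which every $\mathcal C$-interval is left-free and one copy with mixed orientations. An increasing (resp.\ decreasing) $F$ would force all of $\mathcal C_{X_2}$ to be left-free (resp.\ right-free), which is impossible.

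The gap is the case $\mathcal C_X=\{J_0\}$. You rightly note that orientation alone cannot handle it, but you then offer only hedged alternatives ending in ``should rule out both orientations.'' You never fix a quantity preserved by every homeomorphism of $\mathbb R$ together with two explicit copies that differ in it.

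Your split into $X\cap(-\infty,q)$, $P$ and the rest does not work as stated. If $P$ contains points of $X$ on the free side of $J_0$, both of your copies have points on both sides of $J_0$, and nothing is distinguished. You would need $q$ in the gap adjacent to the free endpoint, a nonempty remainder, and a statement of which side's emptiness is being compared.

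Your fallback is both unnecessary and impossible. Let $b$ be the non-free endpoint of $J_0=[a,b]$. Then $b$ is a limit of points of $X\setminus J_0$ lying beyond $b$, so $\mathcal G_X$ is infinite and $X$ always splits into nontrivial clopen pieces. For example, $[a,b]\cup\{b+\frac1n\colon n>0\}$ splits off $\{b+1\}$. Also, a sequence converging to an endpoint of $J_0$ cannot alternate around it, since its terms eventually lie outside $J_0$ on that side.

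The missing idea, which the paper uses, is the invariant ``the unique $\mathcal C$-interval contains $\min$ or $\max$ of the set.''
\begin{itemize}
\item \emph{It is preserved.} Suppose $F$ is a homeomorphism of $\mathbb R$ with $F[X_1]=X_2$. Then $F$ is monotone, so it sends extreme points to extreme points. Also $F|_{X_1}$ carries the unique member of $\mathcal C_{X_1}$ onto the unique member of $\mathcal C_{X_2}$. No left/right bookkeeping is needed.
\item \emph{A copy with the property.} Let $J_0=[a,b]$ with $a$ free. Then $V=X\cap[a,\infty)$ is clopen in $X$. Translating $V$ to the left of $\inf X$ gives $X_1$ with $\min X_1$ equal to the free endpoint of its $\mathcal C$-interval. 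The case where $b$ is free is symmetric.
\item \emph{A copy without it.} Since $\mathcal G_X$ is infinite, cut $X$ along two bounded gaps into three nonempty clopen pieces. Translate them so the piece containing $J_0$ is in the middle. This gives $X_2$ whose $\mathcal C$-interval contains neither $\min X_2$ nor $\max X_2$.
\end{itemize}
Your remark about comparing emptiness on each side of $J_0$ points in this direction: the decisive fact is whether the free side is empty. Only translations of clopen pieces are needed, not rotations.
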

\begin{proof}  Suppose that $\mathcal C_X=\{[a,b]\} $. When $a$ (resp. $b$) is the free endpoint, then  $V=X\cap [a,\rightarrow)$ (resp. $V=X\cap (\leftarrow, b]$) is a closed-open set in $X$. Create $X_1$ by shifting 
 $V $ to the left of $\inf X$ (resp. to the right of $\sup X$), leaving the rest untouched.    Since $\mathcal G_X$ is infinite, we can divide $X$ into three disjoint and non-empty closed-open subsets, then  create $X_2$ by shifting these sets so that the interval $[a,b]$ is contained in the middle set. Clearly, no homeomorphism of $\mathbb R$ carries $X_1$ onto $X_2$.

Now assume that $\mathcal C_X$ contains at least two intervals. Using Lemma \ref{le8}, compare \cite[Remark 8]{bkp}, create a copy $X_1$ of $X$ such that each interval in $\mathcal C_{X_1}$ has the left endpoint free. Then create a copy $X_2$ of $X$ such that among  intervals in  $\mathcal C_{X_2}$ there are an interval with  the left endpoint free and an interval with the right endpoint free. Clearly, $X_1$ and $X_2$ witness that $X$ is not a $t$-set. 
\end{proof}

 As noted in \cite[pp. 8--9]{bkp},  the Cantor set and  an $M$-Cantorval  are  $ t $-sets. A common feature of these sets is that they can be distinguished by  properties of countable ordered spaces. 
  Let us describe these properties in a little more detail than in \cite{bkp}. 
A set $\mathbb C \subset \mathbb R$ is a \textit{copy of the Cantor set} whenever it is compact, dense-in-itself and has a base which consists of closed-open sets, compare \cite[6.2.A.(c)]{enge}. 
\begin{pro}\label{p12} Let $X\subseteq \mathbb R$ be a nonempty compact set. $X$  is a copy of the Cantor set if and only if  $\mathcal G_X=\mathcal Q_X$ interlaces $\mathcal Q_X$.
\end{pro}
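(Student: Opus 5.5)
The statement splits into two conditions, $\mathcal G_X=\mathcal Q_X$ and density of $\mathcal Q_X$ in itself. I will check each direction against the topological description of a copy of the Cantor set: compact, dense-in-itself, with a base of closed-open sets. Since $X$ is compact and nonempty, the first and last elements of $\mathcal Q_X$ are the unbounded gaps. The standing convention is that $I<J$ means $\sup I\leqslant\inf J$.

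\emph{Necessity.} Suppose $X$ is a copy of the Cantor set. A zero-dimensional set contains no non-trivial interval, so $\mathcal I_X=\emptyset$ and $\mathcal Q_X=\mathcal G_X$. For density, take two gaps $G<H$. Put $a=\sup G$ and $b=\inf H$; both lie in $X$ and $a\leqslant b$.
\begin{itemize}
\item If $a=b$, this point is isolated in $X$, because it has gaps on both sides. That contradicts $X$ being dense-in-itself.
\item If $a<b$, the interval $[a,b]$ is not contained in $X$, since $X$ has no non-trivial component. Hence some gap $K$ lies in $(a,b)$, and $G<K<H$.
\end{itemize}

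\emph{Sufficiency.} Assume $\mathcal G_X=\mathcal Q_X$ and that gaps are densely ordered. Then $\mathcal I_X=\emptyset$, so every component of $X$ is a point. I need two further facts.
\begin{itemize}
\item \emph{$X$ is dense-in-itself.} Suppose $x\in X$ is isolated. Since $X$ is compact, $x$ is the right endpoint of a gap $G$ and the left endpoint of a gap $H$. Then $H$ is the immediate successor of $G$ in $\mathcal Q_X$, which contradicts density.
\item \emph{$X$ has a base of closed-open sets.} Take $x\in X$ and $\epsilon>0$. The set $X\cap(x,x+\epsilon)$ is empty or not. If it is empty, then $x$ is the left endpoint of a gap. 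If it is not empty, the interval $[x,x+\epsilon)$ is not contained in $X$, so it meets a gap with a point $q$ in $(x,x+\epsilon)\setminus X$. The same argument works on the left. This gives points $p<x<q$ outside $X$, or $x$ at the edge of a gap. Then $X\cap(p,q)$, suitably adjusted at a gap endpoint, is a closed-open neighbourhood of $x$ of diameter less than $2\epsilon$.
\end{itemize}

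In other words, a compact subset of $\mathbb R$ with trivial components is automatically zero-dimensional, by the standard argument that the complement is dense near each point. Together with compactness, this makes $X$ a copy of the Cantor set.

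The main point requiring care is the correspondence between immediate successors in $\mathcal Q_X$ and isolated points. A gap $G$ has an immediate successor gap $H$ exactly when $\sup G=\inf H$, because any element of $\mathcal Q_X$ lying between them must lie in $[\sup G,\inf H]$. The remaining steps are routine.
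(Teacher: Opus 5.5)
Your proof is correct and follows essentially the same route as the paper. For sufficiency, density of the gaps excludes isolated points, and $\mathcal I_X=\emptyset$ gives zero-dimensionality (the paper phrases this as nowhere density in $\mathbb R$). For necessity, both arguments use that $X$ has no isolated points and no non-trivial intervals to find a gap strictly between two given gaps; you simply spell out details the paper leaves implicit, such as the case $\sup G=\inf H$ and the clopen base.
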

\begin{proof} If $X\not=\emptyset$, then $\mathcal G_X$ has at least two points. Assuming that $\mathcal G_X=\mathcal Q_X$ interlaces $\mathcal G_X$,  we get that $\mathcal G_X$ is infinite, and also that there are no isolated points in $X$. Since $\mathcal I_X=\emptyset$, then $X$ is  compact,  nowhere dense in $\mathbb R$ and has no isolated point, so  $X\not=\emptyset$ is a copy of the Cantor set.  

Conversely, if $X\subset \mathbb R$ is a copy of the Cantor set, then $\mathcal I_X =\emptyset $, i.e., $\mathcal G_X=\mathcal Q_X$. Also between any two different intervals from $\mathcal G_X$ lies a non-empty open subset of $X$, son there also lies an interval from $\mathcal G_X$, this shows that $\mathcal Q_X=\mathcal G_X$ interlaces $\mathcal Q_X$.
\end{proof}

 If a subset $\mathbb C \subset \mathbb R $  is non-empty, bounded,  regularly closed and
 endpoints of any non-trivial component of $\mathbb C$ are limit points
of trivial  components of $\mathbb C$, then it is called an $M$-\textit{Cantorval}, compare \cite[Definition 13]{nit2}.
The name Cantorval has been introduced by P. Mendes and F. Oliveira \cite{mo} as:  \textit{A perfect subset of $\mathbb R$, such that any gap is accumulated on each side by
infinitely many intervals and gaps is called an $M$-Cantorval}. However, the intervals $(-\infty, \inf X)$ and $(\sup X, +\infty)$ are not included in the gaps. Refine this definition by adding that $\inf X$ and $\sup X$ are also accumulated by infinitely many gaps. There are several characterizations of $M$-Cantorvals known in the literature, for example see
 J. A. Guthrie and J. E. Nymann \cite{gn}. The following two propositions add another characterization of $M$-Cantorvals

\begin{pro}\label{p13} If  $X$ is an $M$-Cantorval, then  $\mathcal B_X$ interlaces $ \mathcal Q_X$.   
\end{pro}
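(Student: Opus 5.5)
Fix two distinct elements $K<L$ of $\mathcal Q_X$; we must find $I\in\mathcal B_X$ with $K<I<L$. First we reduce to the case where $K$ is a gap: at least one of $K,L$ is a gap, or, if both are $X$-intervals, some gap lies between them because $\mathcal G_X$ interlaces $\mathcal I_X$. Replacing the pair by a suitable pair, we may assume $K\in\mathcal G_X$ and $K<L$; the symmetric case where $L$ is a gap is handled the same way. Also, no interval of an $M$-Cantorval has a free endpoint, so $\mathcal I_X=\mathcal B_X$, i.e.\ $\mathcal C_X=\mathcal D_X=\emptyset$. Indeed, every endpoint of a non-trivial component is a limit of trivial components, hence of points of $X$ outside that interval. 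So it suffices to find \emph{some} $X$-interval strictly between $K$ and $L$.

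Let $b=\sup K$ when $K$ is a bounded gap, and $b=\inf X$ when $K=(\leftarrow,\inf X)$. Then $b\in X$ and $b\le \inf L$. We argue that $b$ is not the right endpoint of anything touching $L$ directly, and that points of $X$ accumulate to $b$ from the right through sets meeting intervals.

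\textbf{Case 1: $b$ belongs to an $X$-interval $J=[b,c]$.} Then $J$ is non-degenerate. If $J\neq L$, then $K<J<L$ and we are done. If $J=L$, then $b$ is an endpoint of $L$ and also of the gap $K$, so $b$ is not a limit of trivial components from the left. It is also not a limit from the right, since $L$ is there. This contradicts the definition of an $M$-Cantorval, so this subcase cannot occur.

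\textbf{Case 2: $b$ is a trivial component of $X$.} Since the right side of $b$ must contain points of $X$ near $b$, and points of $K$ lie to the left, $b$ is a limit of points of $X$ from the right. Choose $x\in X$ with $b<x<\inf L$. Such an $x$ exists: if $L$ were adjacent to $b$ with no points of $X$ in between, then $\inf L=b$, forcing $L$ to be a gap adjacent to the gap $K$ at the trivial point $b$, which makes $b$ isolated and contradicts perfectness. Hence the open interval $(b,\inf L)$ meets $X$. Since $X$ is regularly closed, the union of $X$-intervals is dense in $X$, so some $X$-interval $I$ meets $(b,\inf L)$. We need $I$ to lie entirely inside $(b,\inf L)$. $I$ cannot contain $b$, since $b$ is a trivial component. $I$ also cannot reach past $\inf L$: if $L$ is a gap this is immediate, and if $L$ is an interval it follows because distinct components are disjoint. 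Thus $K<I<L$ and $I\in\mathcal B_X$.

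\textbf{Main obstacle.} The delicate point is handling the boundary cases: when $K$ or $L$ is an unbounded gap, and when $b$ is an endpoint of an interval. For the unbounded gaps, the refined definition, which requires $\inf X$ and $\sup X$ to be accumulated by gaps, supplies what is needed. For an endpoint $b$ of an interval, the defining property of $M$-Cantorvals gives the contradiction in Case 1. Combining this with density of intervals from regular closedness completes the argument.
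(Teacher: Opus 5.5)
Your argument is correct and is essentially the paper's proof with the details written out. It uses the same ingredients: the absence of free endpoints gives $\mathcal I_X=\mathcal B_X$, regular closedness (perfectness together with density of the $X$-intervals) places an $X$-interval between a gap and any larger element of $\mathcal Q_X$, and the interval--interval case is reduced through an intermediate gap. Only cosmetic remarks: your Case~1 is actually vacuous, since the left endpoint of $J$ would abut the gap $K$ and so could not be a limit of trivial components even if $J\neq L$; and when $L$ is a gap your stated reasons give only $\sup I\le\inf L$ rather than $I\subseteq(b,\inf L)$, which still suffices for $I<L$; finally, the refined condition at $\inf X,\sup X$ is not needed, as the paper's definition already covers the unbounded gaps.
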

\begin{proof} If $X$ is an $M$-Cantorval, then no $X$-interval has a free endpoint, i.e., $\mathcal B_X=\mathcal I_X$,  and consequently between any $X$-interval and any $X$-gap lies an $X$-interval. Any $M$-Cantorval is regularly closed, so between any two $X$-gaps lies an $X$-interval, hence  $\mathcal B_X$ interlaces $ \mathcal Q_X$.
\end{proof}

\begin{pro}\label{p14} If  $X\not=\emptyset$ is a compact subset of real  numbers such that $\mathcal B_X$ interlaces $ \mathcal Q_X$, then $X$ is an $M$-Cantorval.   
\end{pro}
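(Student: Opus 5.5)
We verify the defining properties of an $M$-Cantorval for $X$ directly: nonempty and bounded (given), regularly closed, and every endpoint of an $X$-interval is a limit point of trivial components of $X$. We use the refined version in which $\inf X$ and $\sup X$ are also accumulated by gaps.

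\emph{Step 1: $\mathcal I_X\neq\emptyset$ and $\mathcal I_X=\mathcal B_X$.} Since $X$ is compact and nonempty, $\mathcal Q_X$ has the two unbounded gaps $(\leftarrow,\min X)$ and $(\max X,\rightarrow)$ as distinct elements. Interlacing then gives an element of $\mathcal B_X$ between them, so $\mathcal B_X\neq\emptyset$. Next, let $I\in\mathcal C_X\cup\mathcal D_X$ have, say, its right endpoint $b$ free. Then $b\notin\cl_X(X\setminus I)$, so some $\epsilon>0$ has $(b,b+\epsilon)\cap X=\emptyset$. Hence the $X$-gap $G$ with $\inf G=b$ is the immediate successor of $I$ in $\mathcal Q_X$. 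But $I<G$ are distinct, so interlacing yields some $K\in\mathcal B_X$ with $I<K<G$, which is impossible. So $\mathcal C_X=\mathcal D_X=\emptyset$. Note that interlacing of $\mathcal Q_X$ by $\mathcal B_X$ also applies to pairs of the form (interval, gap), which is exactly what we use here.

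\emph{Step 2: regular closedness.} Let $x\in X$ not lie in any $X$-interval, i.e.\ $\{x\}$ is a trivial component. Fix $\epsilon>0$. Since $X$ is nowhere dense near $x$ apart from its intervals, both $(x-\epsilon,x)$ and $(x,x+\epsilon)$ contain points outside $X$; otherwise $x$ would lie in a nontrivial component. The exception is $x=\min X$ or $x=\max X$, where the unbounded gap sits on one side. Thus there are two distinct gaps $G_1<G_2$ within $(x-\epsilon,x+\epsilon)$, or one such gap together with the unbounded gap. In either case, interlacing places some $K\in\mathcal B_X$ between them. $K$ is a closed interval lying between two points of $(x-\epsilon,x+\epsilon)$, or between $x$ and a nearby gap, so $K\subset(x-\epsilon,x+\epsilon)$. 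Therefore $x$ lies in the closure of $\bigcup\mathcal I_X$, and $X$ is regularly closed.

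\emph{Step 3: endpoints accumulated by trivial components, and gaps accumulating at the ends.} Let $b$ be the right endpoint of $I\in\mathcal B_X$. Since $b$ is not free, every right neighbourhood of $b$ meets $X\setminus I$, and because $I$ is a component it also meets the complement of $X$. This gives gaps $G$ arbitrarily close to $b$ on the right. Between $I$ and such a $G$ there is, by interlacing, an interval $K$, and between $I$ and $K$ there is a gap. Iterating, we obtain infinitely many gaps and intervals accumulating at $b$. To get trivial components, take a nearby gap $G$ close to $b$ and consider $X\cap[b,\inf G]$. This compact set contains infinitely many intervals, ordered densely with gaps interlaced, and these intervals have lengths tending to zero. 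A nested choice of alternating intervals and gaps, shrinking toward a point, produces a point of $X$ that belongs to no interval, since the intervals between consecutive chosen ones are disjoint from it. Hence trivial components accumulate at $b$. The same argument applied at $\min X$ and $\max X$, using the unbounded gaps, gives accumulation there too.

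The main obstacle is the construction in Step 3: exhibiting trivial components, rather than merely gaps, arbitrarily close to $b$. I would handle it by building a nested sequence of closed intervals $[\sup K_n,\inf G_n]$ whose lengths tend to $0$, choosing the $K_n$ and $G_n$ by alternately applying the interlacing property. Their intersection is a single point of $X$ that is not contained in any $X$-interval, because each $X$-interval meeting the window is eventually excluded from the nested intervals.
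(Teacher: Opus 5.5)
Your proof is correct, and it reaches the conclusion by a somewhat different route than the paper.

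\textbf{Shared start.} Both arguments begin the same way. $\mathcal B_X\neq\emptyset$, and a free endpoint would make an interval adjacent to a gap with nothing from $\mathcal B_X$ in between, so $\mathcal C_X\cup\mathcal D_X=\emptyset$.

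\textbf{The paper's route.} It gets regular closedness indirectly: interlacing excludes isolated points and closed-open copies of the Cantor set. It then checks the Mendes--Oliveira form of the definition. Endpoints of bounded gaps are accumulated on each side by infinitely many intervals and gaps, and $\inf X$, $\sup X$ are not endpoints of $X$-intervals. It simply asserts that these properties suffice.

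\textbf{Your route.} You verify head-on the definition actually stated in the paper: nonempty, bounded, regularly closed, and interval endpoints are limit points of trivial components.
\begin{itemize}
\item Step 2 proves density of $\bigcup\mathcal I_X$ by trapping a $\mathcal B_X$-interval between the two gaps flanking a trivial component.
\item Step 3 manufactures trivial components near each interval endpoint by nested windows $[\sup K_n,\inf G_n]$. The paper never checks this property explicitly.
\end{itemize}
Your argument is longer but self-contained. The paper's is shorter but leans on the equivalence of the two definitions of $M$-Cantorval.

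\textbf{One thing to tighten in Step 3.} Interlacing by itself does not force the windows to shrink, so you need to state a rule. For instance, let $m$ be the midpoint of $[\sup K_n,\inf G_n]$.
\begin{itemize}
\item If $\{m\}$ is a trivial component, you are done.
\item If $m\in J\in\mathcal I_X$, put $K_{n+1}=J$ and $G_{n+1}=G_n$.
\item If $m\in H\in\mathcal G_X$, put $G_{n+1}=H$ and pick $K_{n+1}\in\mathcal B_X$ with $K_n<K_{n+1}<H$.
\end{itemize}
Each case at least halves the length. Alternatively, enumerate $\mathcal I_X$ and, at stage $n$, push $K_{n+1}$ to the right of the $n$-th interval whenever it lies in the current window. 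Then $\lim_{n}(\sup K_n)$ is a trivial component even without the lengths tending to $0$.
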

\begin{proof} The family $\mathcal B_X$ has to be infinite and $\mathcal C_X \cup \mathcal D_X = \emptyset$, since $X\not=\emptyset$ and $\mathcal B_X$ interlaces $ \mathcal Q_X$. For the same reasons, no point in $X$ is isolated and $X$ does not contain a copy of the Cantor set as a closed-open set. Thus  $X$ is a regularly closed subset of $\mathbb R$. Since $\mathcal I_X =\mathcal B_X$, endpoints of each bounded $X$-gap has be to accumulated on each side by infinitely many intervals from $\mathcal B_X$, and therefore also through infinitely many $X$-gaps. Since $(-\infty, \inf X )$ and $(\sup X, +\infty)$ are considered $X$-gaps, then both $\inf X$and  $\sup X$ cannot be the endpoint of any $X$-interval. All the above-mentioned properties are sufficient to establish that $X$ is an $M$-Cantorval.
\end{proof}

Note that, if $\emptyset\not=X\subset \mathbb R$ and $X$ is  compact, and  $\mathcal B_X$ interlaces $ \mathcal Q_X$, then families $\mathcal B_X$ and $\mathcal G_X$ are dense in  $ \mathcal Q_X$. Indeed, by the definition, there is always an $X$-gap between any two distinct $X$-intervals. Since $\mathcal B_X$ interlaces $ \mathcal Q_X$,  there is always an $X$-interval between an $X$-gap and an $X$-intervals, and also, there is always an $X$-interval between any two distinct $X$-gaps. So, similarly as in \cite[Proposition 10, Lemma 11 and Corollary 12]{bkp} we get that any  $M$-Cantorval is a  $ t $-set. 

\begin{cor}\label{c15}  The Cantor set and each $M$-Cantorval  are  $ t $-sets.   
\end{cor}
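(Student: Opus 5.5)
By Proposition \ref{co7}, it suffices to prove the following for $X$ either a copy of the Cantor set or an $M$-Cantorval, and for any copy $Y\subseteq\mathbb R$ of $X$: there is an increasing bijection $f\colon \mathcal Q_X\to\mathcal Q_Y$ with $f[\mathcal I_X]=\mathcal I_Y$. The plan is to identify the order type of $\mathcal Q_X$, together with its partition into gaps and intervals, intrinsically. Then we show that $Y$ has the same structure and apply the back-and-forth Lemma \ref{le2}.

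\emph{Step 1: $Y$ is of the same kind as $X$.} Being a copy of the Cantor set is defined topologically, so $Y$ is one as well. For an $M$-Cantorval we proceed as follows.
\begin{itemize}
\item Being regularly closed means that the union of the nondegenerate components is dense. This is topological, since a homeomorphism $X\to Y$ maps components to components and nondegenerate ones to nondegenerate ones.
\item A homeomorphism maps endpoints of a $Y$-interval to endpoints, because these are the non-cut points of the arc. So the condition that endpoints of $X$-intervals are limits of trivial components transfers to $Y$.
\item No free endpoints, i.e.\ $\mathcal C_Y\cup\mathcal D_Y=\emptyset$, likewise transfers, since an endpoint $e$ being free means $e\in\int_Y I$.
\end{itemize}
Hence $Y$ is an $M$-Cantorval, and by Proposition \ref{p13}, $\mathcal B_Y=\mathcal I_Y$ interlaces $\mathcal Q_Y$.

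\emph{Step 2: Cantor set case.} By Proposition \ref{p12}, $\mathcal Q_X=\mathcal G_X$ and $\mathcal Q_Y=\mathcal G_Y$ are countable and dense. Each has a least and a greatest element, namely the two unbounded gaps. Remove these from both sides; what remains is countable, dense and unbounded. Cantor's theorem (Lemma \ref{le2} with a single piece $A_0=B_0$) gives an isomorphism of the remainders. Extend it by matching the least elements and the greatest elements. Since $\mathcal I_X=\mathcal I_Y=\emptyset$, the condition $f[\mathcal I_X]=\mathcal I_Y$ holds trivially.

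\emph{Step 3: $M$-Cantorval case.} By the remark following Proposition \ref{p14}, both $\mathcal G_X$ and $\mathcal B_X=\mathcal I_X$ are dense in $\mathcal Q_X$, and the same holds for $Y$.
\begin{itemize}
\item Remove the two unbounded gaps. The remaining order $\mathcal Q_X^{\circ}$ is countable and unbounded: near each end there are further gaps, because $\inf X$ and $\sup X$ are accumulated by gaps.
\item It is dense, and it is partitioned into $A_0=\mathcal G_X^{\circ}$ and $A_1=\mathcal I_X$, each of which interlaces $\mathcal Q_X^{\circ}$.
\item We need that between any two elements of $\mathcal Q^{\circ}_X$ there are both a gap and an interval. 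For two intervals, a gap lies between them by definition, and an interval lies between that gap and either of them. The other cases are analogous.
\item Do the same for $Y$, with $B_0,B_1$. Lemma \ref{le2}, in its version for countable dense unbounded orders, gives an isomorphism sending gaps to gaps and intervals to intervals. Extend it at the two unbounded gaps.
\end{itemize}
Proposition \ref{co7}, or directly Theorem \ref{tr2}, then yields the homeomorphism of $\mathbb R$.

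The main obstacle I expect is Step 1 for the $M$-Cantorval: checking carefully that the defining properties, in particular the absence of free endpoints and the accumulation at $\inf$ and $\sup$, are intrinsic to the topology of $X$ rather than to its embedding. Everything else reduces directly to Lemma \ref{le2}.
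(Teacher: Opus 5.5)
Your proposal is correct and follows the paper's route. Both arguments build an order isomorphism $\mathcal Q_X\to\mathcal Q_Y$ carrying gaps to gaps, using Cantor's theorem for the Cantor set and the back-and-forth Lemma~\ref{le2} on the gap/interval partition for $M$-Cantorvals, and then conclude with Theorem~\ref{tr2}. Your version is slightly more careful than the paper's: the paper silently assumes that a copy of an $M$-Cantorval is again one, and it treats $h\colon\mathcal Q_X\to\mathcal Q$ as an isomorphism although $\mathcal Q_X$ has the two unbounded gaps as endpoints. Your Step~1 and your removal of those two endpoints handle exactly these points.
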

\begin{proof} If $X$ and $Y$ are copies of the Cantor set, then $\mathcal Q_X=\mathcal G_X$ and $\mathcal Q_Y=\mathcal G_Y$ are dense. If $f\colon \mathcal Q_X \to \mathcal Q_Y$ is an isomorphism, then Theorem \ref{tr2} works. 

Now, suppose that $X$ and $Y$ are $M$-Cantorvals. Let $h\colon \mathcal Q_X \to \mathcal Q$ and $g\colon \mathcal Q_Y \to \mathcal Q$ be isomorphisms. Each of the sets $h[\mathcal G_X]$, $h[\mathcal B_X]$, $g[\mathcal G_Y]$  and $g[\mathcal B_Y]$  interlaces 
$\mathcal Q$.  By Lemma \ref{le2}, there exists an isomorphism $f\colon \mathcal Q \to \mathcal Q$ such that $f(h[\mathcal G_X]) = g[\mathcal G_Y]$. 
Thus $g^{-1} \circ f \circ h\colon \mathcal Q_X \to \mathcal Q_Y$ is an isomorphism such that $(g^{-1} \circ f \circ h)[\mathcal G_X] = \mathcal G_Y$, so once again Theorem \ref{tr2} works. \end{proof}

As noted in \cite{bkp}, Corollary \ref{c15} coincides with one of the  results presented  by M. Walczy\'nska \cite{wal} in her doctoral dissertation.

\begin{pro}\label{p16} Let $X\subseteq \mathbb R$ be a compact set and $\mathcal I_X=\emptyset $. Then $X$  is a $ t $-set if and only if $X$ is finite or $X$ is a copy of the Cantor set. 
\end{pro}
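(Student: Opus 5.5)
The easy direction comes first. A finite set is a $t$-set, as already observed. A copy of the Cantor set is a $t$-set by Corollary~\ref{c15}.

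For the converse, assume $X$ is compact, $\mathcal I_X=\emptyset$, $X$ is a $t$-set, and $X$ is infinite. We must show $X$ is a copy of the Cantor set. By Proposition~\ref{p12} it suffices to show that $\mathcal G_X=\mathcal Q_X$ is dense, i.e.\ that $X$ has no isolated points. Since $\mathcal I_X=\emptyset$, $X$ is zero-dimensional and nowhere dense, so the only possible obstruction is isolated points. Suppose, for contradiction, that $X$ has an isolated point. Being infinite and compact, $X$ also has a limit point. We produce two copies $X_1,X_2$ of $X$ such that no monotone bijection $\mathcal Q_{X_1}\to\mathcal Q_{X_2}$ exists. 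This contradicts Proposition~\ref{co7}, or directly the definition of a $t$-set.

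Let $x$ be an isolated point of $X$. The plan is to build the copies with Lemma~\ref{le9} and Corollary~\ref{c10}.

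\emph{Copy $X_1$.} Use Lemma~\ref{le9} to get a copy in which $x$ sits at $\min X_1$.

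\emph{Copy $X_2$.} Pick a gap $z\notin X$ splitting $X$ into two nonempty clopen pieces. Move $x$ into the interior of the convex hull, between two clopen pieces, each of which contains a limit point. This is possible if $X$ has a non-isolated point $p$ and we split $X\setminus\{x\}$ into two clopen parts each containing non-isolated points; if only one such part can be found, place the infinite part on one side and other isolated points on the other instead.

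The invariant that distinguishes them is ordinal in nature. In $X_1$, the extreme point $\min X_1$ is an isolated point. In $X_2$, both extremes can be arranged to be limit points, using Lemma~\ref{le9} to put a limit point $p$ at the end. An increasing or decreasing homeomorphism of $\mathbb R$ preserves the pair $\{\min,\max\}$ and isolatedness. So it suffices to make both extremes of $X_2$ non-isolated while $\min X_1$ is isolated. To achieve this, split $X$ into two clopen sets $U\ni p$ and $V$, apply Lemma~\ref{le9} to put $p$ at the minimum of a copy of $U$, and put the copy of $V$ to the right. If $V$ contains a non-isolated point, apply Lemma~\ref{le9} again so that point is the maximum.

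The main obstacle is the case where every non-isolated point must lie in one clopen piece, for instance when $X$ has a single limit point (already excluded in the text) or when the derived set is connected. Since $\mathcal I_X=\emptyset$, the derived set $X'$ is totally disconnected. So either $|X'|=1$, which is handled by the earlier remark that such sets are not $t$-sets, or $X'$ has two points $p\neq q$. In the latter case Corollary~\ref{c10} applied to the trivial components $\{p\},\{q\}$ gives a copy $X_2$ with both extremes non-isolated. Comparing it with $X_1$ from Lemma~\ref{le9}, whose minimum is the isolated point $x$, finishes the argument.
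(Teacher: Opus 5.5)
Your proof is correct and is essentially the paper's. It uses the same invariant (whether the extreme points of a copy are isolated) and the same tools, Lemma~\ref{le9} and Corollary~\ref{c10}. The one difference is that you split on the number of limit points rather than the number of isolated points: a single comparison (isolated point at the minimum versus limit points at both ends) then handles every $X$ with $|X'|\geqslant 2$, while $|X'|=1$ falls to the earlier remark on sets with one limit point, which the paper's second case (limit point at an end versus isolated points at both ends) in effect proves. The exploratory ``Copy $X_2$'' paragraph, with its vague fallback of putting isolated points on the other side, is superseded by your last paragraph and should be cut.
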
 
\begin{proof} Suppose that $X$ is a $t$-set. If $X$ is  discrete, then $X$, being a compact set,  must be finite.  If $X\not=\emptyset$ contains no isolated point,   then $X$ is a dense-in-itself zero-dimensional compact metric space, since there is no $X$-interval. Therefore  $X$ is a copy of the Cantor set, see \cite[6.2.A. (c)]{enge}. 
When $X$ contains a limit point and $a\in X$ is the only isolated point  in $X$, then $X\setminus \{a\}$ is   a copy of the Cantor set. Since it can be that $a=\inf X$ and it also  can be that $a\notin \{\inf X, \sup X\}$, so $X$  is not a $ t $-set.
In turn, if $X$ contains at least two isolated points and a limit point $b$, then   it can be that $b=\inf X$, because of  Lemma \ref{le9},  and  it also can be that $ \{\inf X, \sup X\}$ consists of isolated points by  Corollary  \ref{c10}. Therefore   $X$ is not a $ t $-set.

Obviously, any finite set is a $t$-set. By Corollary \ref{c15}, the Cantor set is a $t$-set. So, the reverse implication is also fulfilled. 
\end{proof}

\begin{pro}\label{p17} Let $X\subseteq \mathbb R$ be a compact set  such that $\mathcal I_X=\mathcal D_X \not=\emptyset $. Then $X$ is a $ t $-set if and only if it is a union of a closed interval and an isolated point or it is a union of finitely many closed intervals. 
\end{pro}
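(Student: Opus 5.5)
We prove the two implications separately. The reverse implication is already contained in the introductory remarks of this section: a union of finitely many closed intervals, or of one closed interval and one isolated point, is a $t$-set. To make this precise we will use Proposition~\ref{co7}. If $X$ is a union of $n$ disjoint non-trivial closed intervals, then every copy $Y$ of $X$ is again such a union, because components are preserved. Hence $\mathcal Q_X$ and $\mathcal Q_Y$ are both the alternating finite chain (gap, interval, gap, $\dots$, gap) of length $2n+1$, and the unique isomorphism between them carries $\mathcal I_X$ onto $\mathcal I_Y$. If $X=[a,b]\cup\{c\}$, then a copy $Y$ is also an interval together with a point. Its chain is either (gap, interval, gap, gap) or (gap, gap, interval, gap), so a monotone bijection mapping intervals to intervals exists, increasing in one case and decreasing in the other.

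For the forward implication, assume $X$ is a $t$-set with $\mathcal I_X=\mathcal D_X\neq\emptyset$. Since every $X$-interval has two free endpoints, each $X$-interval is a closed-open subset of $X$.

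\emph{First step: $\mathcal I_X$ is finite.} Suppose instead that $\mathcal I_X$ is infinite. By compactness, the intervals $J_n\in\mathcal D_X$ then accumulate at some point $x\in X$, and $x$ lies in no interval, so $x$ belongs to a trivial component. We will build two copies of $X$:
\begin{itemize}
\item $X_1$, in which every $X$-interval lies to the left of all trivial components. Since each $J_n$ is closed-open, we move the intervals one by one into a sequence of disjoint windows in the complement, in the manner of Lemma~\ref{le9}: shrink and shift the closed-open sets into gaps chosen so that the modifications converge uniformly, as in the proof of Lemma~\ref{le8}.
\item $X_2$, in which some trivial component lies to the left of some interval.
\end{itemize}
These two copies should be distinguished by an order invariant of $\mathcal Q$ that is preserved by monotone bijections. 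A cleaner option is Corollary~\ref{c10} applied to two trivial components of $X$, if $X$ has at least two of them. This yields a copy whose $\inf$ and $\sup$ both lie in trivial components, so the extreme elements of $\mathcal Q$ have no interval adjacent to them. On the other hand, shifting one closed-open interval $J_0$ to the far left gives a copy in which the smallest gap is immediately followed by an interval. No monotone bijection of the kind required in Proposition~\ref{co7} can match these two configurations, because the property ``some endpoint gap is adjacent to an interval'' is invariant. Hence $\mathcal I_X$ must be finite.

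\emph{Second step: the rest of $X$.} With $\mathcal I_X$ finite, the set $Z=X\setminus\bigcup\mathcal I_X$ is compact, closed-open in $X$, and satisfies $\mathcal I_Z=\emptyset$. If $Z=\emptyset$, we are done. Otherwise, we show that $Z$ is a single point. If $Z$ has at least two points, we produce a copy of $X$ with two trivial components at the two extremes (Corollary~\ref{c10}) and a copy with an interval at an extreme. To get the second copy, shift one interval outside: the interval is closed-open, so the result is homeomorphic to $X$. As before, no monotone bijection can match these copies. If $Z=\{c\}$ and there are at least two intervals, we compare the copy with $c$ between two intervals against the copy with $c$ at an end. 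In the first, both extreme gaps are adjacent to intervals; in the second, one extreme gap is adjacent to a trivial component. This again contradicts Proposition~\ref{co7}.

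The main obstacle is the first step. We have to check carefully that moving infinitely many closed-open pieces still yields a homeomorphic copy; the uniform-convergence argument of Lemma~\ref{le8} should handle this. We also have to state the invariant precisely: under a monotone bijection $f\colon\mathcal Q_X\to\mathcal Q_Y$ with $f[\mathcal I_X]=\mathcal I_Y$, the minimum and maximum of $\mathcal Q$ are mapped to the minimum and maximum, possibly swapped, and immediate successors in $\mathcal I$ are preserved.
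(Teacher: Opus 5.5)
Your reverse implication and your second step (the case $\mathcal I_X$ finite) are correct. The second step handles every $Z$ with at least two points uniformly, via Corollary~\ref{c10} and the invariant ``some extreme element of $\mathcal Q$ has an immediate neighbour in $\mathcal I$''. That is tidier than the paper's split into an isolated-point case and a Cantor-set case.

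The gap is in the first step. Your working argument there (the ``cleaner option'') assumes that $X$ has at least two trivial components, and this need not hold. For example, $X=\{0\}\cup\bigcup_{n\geqslant 1}[\frac{1}{2n+1},\frac{1}{2n}]$ has $\mathcal I_X=\mathcal D_X$ infinite and exactly one trivial component. In this case your invariant cannot work. In any copy $Y$, at most one of $\inf Y,\sup Y$ can be that single point, so some extreme gap is always adjacent to an interval. The first-bullet construction does not rescue this either. A copy with all intervals to the left of all trivial components need not exist: the set of trivial components that are limits of intervals is a topological invariant, and in such a copy it has at most one point. In the example above, such a copy is just the mirror image of $X$, so a decreasing bijection matches it with $X$. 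For the same reason, the ``main obstacle'' you identify (moving infinitely many pieces) is not the real issue.

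The fix is the finer invariant you already use at the end of your second step: the number of extreme elements of $\mathcal Q$ that have an immediate neighbour in $\mathcal I$. Monotone bijections preserve this number. By Lemma~\ref{le9}, take a copy in which some trivial component is the infimum. Its minimal gap then has either no immediate successor or a gap as successor. Compare it with the copy obtained by shifting two of the closed-open intervals to the far left and far right, where both extreme gaps are adjacent to intervals. This argument works for any number of trivial components, and it is exactly the paper's argument for the infinite case.
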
 
\begin{proof} If the family $\mathcal D_X$ is infinite, then $X$ contains  a trivial component $\{ b\}$.  By Lemma \ref{le9}, it can be that $b=\inf X$.
It also can be that $ \{\inf X, \sup X\}$ consists of endpoints of $X$-intervals.  So, $X$ is not a $ t $-set. 

If $X$ is a $t$-set and  $\mathcal I_X=\mathcal D_X$ is finite, then $X$ contains at most  one isolated point. Indeed, if $b$ is the isolated point in $X$, then $X$ has to be  a union of a closed interval and $b$. In turn, if $X$ contains no isolated point, then $X \setminus \bigcup \mathcal I_X$ is a compact and dense it itself subset. Thus, if $X = \bigcup \mathcal I_X$, then $X$ is a union of finitely many closed intervals. But if $X \setminus \bigcup \mathcal I_X$  is a copy of the Cantor set, then $X$ is not a $ t $-set.

The reverse implication is obvious.\end{proof}

\begin{pro}\label{p18} A regularly closed and compact set $X\subseteq \mathbb R$, such that the family $\mathcal I_X=\mathcal B_X  $ is infinite, is a $ t $-set if and only if $X$ is an $M$-Cantorval.
\end{pro}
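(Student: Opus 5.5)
One direction is already settled. By Corollary \ref{c15} every $M$-Cantorval is a $t$-set, and an $M$-Cantorval is regularly closed with $\mathcal I_X=\mathcal B_X$ infinite, since $\mathcal B_X$ interlaces $\mathcal Q_X$ by Proposition \ref{p13}. So the work is the forward implication. Assume $X$ is compact, regularly closed, $\mathcal I_X=\mathcal B_X$ is infinite, and $X$ is a $t$-set. By Proposition \ref{p14} it suffices to show that $\mathcal B_X$ interlaces $\mathcal Q_X$. I will argue by contraposition: if $\mathcal B_X$ fails to interlace $\mathcal Q_X$, I will build two copies of $X$ that no homeomorphism of $\mathbb R$ can match, in the spirit of Proposition \ref{p16} and Proposition \ref{p17}.

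First I classify the possible failures. There are distinct $K<L$ in $\mathcal Q_X$ with no $X$-interval between them. Two intervals cannot be such a pair, because a gap lies between them. Regular closedness rules out two gaps separated by points of $X$ with no interval, since the union of intervals is dense in $X$. The same argument excludes a nontrivial open set of points of $X$ between $K$ and $L$. Hence either $K$ and $L$ are adjacent in $\mathcal Q_X$, or they share an endpoint $y\in X$. Two cases remain. In case (a), two gaps share an endpoint, so $y$ is an isolated point of $X$; this contradicts regular closedness unless $X=\{y\}$, which is impossible since $\mathcal B_X\neq\emptyset$. In case (b), an interval and a gap are adjacent. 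The interval's endpoint is then the gap's endpoint, and it would be a free endpoint, contradicting $\mathcal C_X\cup\mathcal D_X=\emptyset$, unless the gap is unbounded. So the only genuine failure is that $\inf X$ or $\sup X$ is an endpoint of some $X$-interval $I$. An unbounded gap is not accumulated by intervals on its outer side, which is harmless because it has no outer side.

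In that case, say $\min I=\inf X=a$. Since $I\in\mathcal B_X$, the right endpoint of $I$ is a limit of other intervals, so $\mathcal G_X$ contains bounded gaps. I then produce a second copy $X'$ in which $I$ sits strictly inside, away from both ends. To do this, pick a bounded gap $(c,d)$ to the right of $I$ with $c\notin I$. This is possible because infinitely many intervals accumulate at $\max I$. The set $V=X\cap[d,\sup X]$ is closed-open in $X$. Translate $V$ to the left of $a$. In the resulting copy $X'$, the points $\inf X'$ and $\sup X'$ belong to $V$ and to $X\cap[a,c]$. The delicate part is arranging that neither extreme point of $X'$ is an endpoint of an interval. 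To achieve this, I choose the cut $(c,d)$ so that $\sup X$ is not an endpoint of an interval, or so that the pieces are reassembled in a different order. If both $\inf X$ and $\sup X$ are endpoints of intervals, I instead cut out a middle closed-open piece and move it outward, so that one extreme of the new copy is not an interval endpoint. Such a piece exists because intervals accumulate to the right of $I$ and give closed-open sets whose extremes are limits of gaps.

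Any homeomorphism of $\mathbb R$ is monotone and so maps $\{\inf X,\sup X\}$ onto $\{\inf X',\sup X'\}$. It also maps endpoints of $X$-intervals to endpoints of $X'$-intervals. The number of extreme points that are interval endpoints is therefore an invariant, and $X$ and $X'$ differ in it, which contradicts tameness. The main obstacle I expect is this last bookkeeping step. I must choose the closed-open pieces so that the new extremes are limits of gaps and not interval endpoints. For this I would use that every endpoint of a $\mathcal B_X$-interval is a limit of other intervals, hence of gaps, and select cut points inside gaps that accumulate at such endpoints.
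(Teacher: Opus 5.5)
Your route differs from the paper's and reaches the right conclusion, but one claim in it is false, and your whole tameness construction is aimed at a case that cannot occur.

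\textbf{The false claim.} In case (b) you say an endpoint of an $X$-interval adjacent to an \emph{unbounded} gap need not be free. Under the paper's definition it always is free. If $I=[a,b]\in\mathcal I_X$ and $a=\inf X$, then $X\setminus I\subseteq(b,+\infty)$, so $a\notin\cl_X(X\setminus I)$. Thus $a$ is free and $I\notin\mathcal B_X$; the proof of Proposition~\ref{p14} makes exactly this remark about $(-\infty,\inf X)$ and $(\sup X,+\infty)$. So under $\mathcal I_X=\mathcal B_X$ your ``only genuine failure'' is vacuous. Interlacing then follows from regular closedness and the absence of free endpoints alone, and Proposition~\ref{p14} finishes without any appeal to tameness.

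\textbf{The construction itself.} Even on your own reading it is simpler than you fear. After translating $V=X\cap[d,\sup X]$, the extremes of $X'$ are $c$ and the translate of $d$. These are endpoints of a bounded gap, hence trivial components, so none of the extra ``bookkeeping'' choices you list are needed.

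\textbf{A step to repair.} Two intervals $K<L$ are not excluded merely ``because a gap lies between them'': a gap $G$ with $K<G<L$ does not by itself give an interval between $K$ and $L$. Instead reduce the pair $(K,L)$ to the interval--gap pair $(K,G)$. Any $J\in\mathcal B_X$ with $K<J<G$ then also lies between $K$ and $L$.

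\textbf{Comparison with the paper.} The paper verifies the definition of an $M$-Cantorval directly: regular closedness plus $\mathcal C_X=\mathcal D_X=\emptyset$. It invokes Corollary~\ref{c10} and the $t$-set hypothesis only to obtain $\cl_X(\bigcup\mathcal I_X)=X$, which regular closedness already gives. Your route goes through the order-theoretic criterion of Proposition~\ref{p14}. That costs a case analysis of interlacing, but it lets an already-proved result deliver the conclusion. Once corrected, both arguments show that the $t$-set assumption plays no real role in the forward implication. Every compact regularly closed $X$ with $\mathcal I_X=\mathcal B_X\neq\emptyset$ is an $M$-Cantorval, and the converse is Corollary~\ref{c15}.
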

\begin{proof} Suppose that $X$ is a regularly closed $t$-set and  $\mathcal I_X=\mathcal B_X  $ is infinite. Then the set $\{ \inf \bigcup \mathcal I_X, \sup \bigcup \mathcal I_X\}$ consists of limit 
points such that $\{\inf\bigcup \mathcal I_X\}$ and $\{ \sup \bigcup \mathcal I_X\}$ are trivial components. By Corollary  \ref{c10}, there is a copy of $X$ such that $\inf\bigcup \mathcal I_X= \inf X$ and $\sup\bigcup \mathcal I_X= \sup X$. For this reason, if $X$ has an isolated point or $X$ contains a closed-open subset that is a copy of the Cantor set, then $X$ is not a $ t $-set. Hence  we get $\cl_X (\bigcup \mathcal I_X) =X$. Since $\mathcal C_X=\emptyset= \mathcal D_X$,  we conclude that  $X$ is an $M$-Cantorval.

Corollary \ref{c15} gives the inverse implication.
\end{proof}

\section{When $\mathcal B_X $ is  finite} 
Consider the following  compact subsets of $\mathbb R$.
  \begin{itemize}
	\item  $\mathbb A = [0,1] \cup \{\frac{-1}{n}\colon n>0\} \cup \{\frac{n+1}{n}\colon n>0\}$, 
	\item  $\mathbb B = [0,1] \cup \bigcup \{[\frac{-1}{2n}, \frac{-1}{2n+1}]\colon n>0\} \cup \bigcup \{[\frac{2n+2}{2n+1}, \frac{2n+1}{2n}]\colon n>0\}$, 
	\item  $\mathbb D= [0,1] \cup \mathbb C_0 \cup \mathbb C_1,$ where $\mathbb C_0$ and $ \mathbb C_1$ are copies of the Cantor set such that $\sup \mathbb C_0 =0$ and $\inf  \mathbb C_1 =1$.
   \end{itemize} 
Each of these sets is a $t$-set. Indeed, when $Y$ is a copy of one of them, then $\mathcal B_Y=\{J\}$. Each endpoint of $J$ is accumulated by a monotone sequence of $Y$-gaps such that between adjacent gaps lies an isolated point, applies to $\mathbb A$, or a $Y$-interval with free endpoints, applies to $\mathbb B$, or a  copy of the Cantor set, applies to $\mathbb D$. This is enough to define a homeomorphism of $\mathbb R$ that carries $Y$ onto $\mathbb A$ or $\mathbb B$, or $\mathbb D$.

\begin{pro}\label{p19} 
If  $X\subseteq \mathbb R$ is a $ t $-set  and $\mathcal B_X\not=\emptyset$ is finite, then  $X$ is one of the following
\begin{enumerate}
	 	\item A union of $|\mathcal B_X|$ many pairwise disjoint copies of  $\mathbb A$, 
	\item A union of $|\mathcal B_X|$ many pairwise disjoint copies of  $\mathbb B$,
	\item A  union of $|\mathcal B_X|$ many pairwise disjoint copies of $\mathbb D$.
	\end{enumerate}
\end{pro}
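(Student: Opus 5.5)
By Proposition \ref{p11}, a $t$-set $X$ has $\mathcal C_X=\emptyset$, so $\mathcal I_X=\mathcal B_X\cup\mathcal D_X$. Write $\mathcal B_X=\{J_1<\dots<J_k\}$. The plan is to show two things. First, the part of $X$ outside $\bigcup\mathcal B_X$ has, near each endpoint of each $J_i$, exactly one of three homogeneous shapes, and this shape is the same at all $2k$ endpoints. Second, $X$ splits into $k$ clopen pieces, each containing exactly one $J_i$ and homeomorphic to $\mathbb A$, $\mathbb B$ or $\mathbb D$. Throughout we use the idea of the earlier propositions: if some configuration can be moved by a homeomorphism of $X$ onto another copy where it looks different, then by Proposition \ref{co7} $X$ is not a $t$-set.

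\textbf{Step 1: the pieces.} Since $\mathcal B_X$ is finite and $\mathcal G_X$ interlaces $\mathcal I_X$, we choose points $z_1<\dots<z_{k-1}$ in gaps separating consecutive $J_i$. This gives clopen pieces $X^1,\dots,X^k$ with $J_i\subseteq X^i$. Let $Z=X\setminus\bigcup_i\operatorname{int}J_i$; it is compact, and its $Z$-intervals all have two free endpoints.

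\textbf{Step 2: rigidity of $Z$, away from the $J_i$.} We show that every point of $X^i$ not in $J_i$ accumulates only toward $J_i$, i.e.\ $X^i\setminus U$ is finite for every neighbourhood $U$ of $J_i$. Suppose instead some point $p\notin\bigcup\mathcal B_X$ is a limit point, an interval of $\mathcal D_X$, or a Cantor clopen piece, and is separated from every $J_i$ by a gap. Then, arguing as in Propositions \ref{p16} and \ref{p17}, we build two copies of $X$:
\begin{itemize}
\item in one, a clopen block around $p$ is moved by Lemma \ref{le9} or Corollary \ref{c10} to the extreme position $\inf$ of the set;
\item in the other, an endpoint of some $J_i$ is placed at $\inf$, so that $\inf X$ is the left endpoint of a $\mathcal B$-interval.
\end{itemize}
The two copies differ at $\inf$, since $\mathcal B$-membership is preserved by the isomorphism of $\mathcal Q_X$ that Proposition \ref{co7} would supply. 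This contradiction shows that near each endpoint $e$ of $J_i$, the set $X$ minus $J_i$ consists of clopen blocks accumulating monotonically to $e$ from the outside, each block being a point, a $\mathcal D$-interval, or a Cantor set. Moreover each block is \emph{minimal}: a single point, a single interval, or a single Cantor set. Otherwise, by shuffling finitely many blocks with rotations (the rotation trick used before Lemma \ref{le8}), we get copies where one clopen neighbourhood sequence of $e$ has a different pattern.

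\textbf{Step 3: uniformity of block types.} We show that in each side-sequence almost all blocks have the same type, and that this type agrees across all $2k$ sides. Two lemma-style constructions do this:
\begin{enumerate}
\item If infinitely many blocks of two different types (say isolated points and Cantor sets) occur on one side, we reorder the blocks by rotations/shifts of disjoint closed intervals, converging as in Lemma \ref{le8}. One copy then has only one type accumulating at the right endpoint of $J_1$ from the right, while another has types alternating. These are not related by any increasing or decreasing $\mathcal B$-preserving isomorphism of $\mathcal Q$.
\item If two sides have different types, a clopen swap gives copies exchanging which type sits next to $\inf\bigcup\mathcal I_X$; together with a reflection this again contradicts Proposition \ref{co7}.
\end{enumerate}
Finitely many exceptional blocks are excluded the same way, by moving one of them to $\inf X$. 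Hence each $X^i$ is a copy of $\mathbb A$, $\mathbb B$ or $\mathbb D$, with the same letter for all $i$, using the homeomorphism argument stated for these three sets.

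\textbf{Main obstacle.} I expect Step 3 to be the hardest. Bookkeeping the ordered invariants of $\mathcal Q_X$ (immediate successors in $\mathcal G$, and membership in $\mathcal B$ versus $\mathcal D$) that must be preserved under any monotone bijection needs care, especially for mixtures of types in a single convergent sequence. I would first formalise the invariant "the $\omega$-sequence of gaps converging to an endpoint of a $\mathcal B$-interval, together with what lies between consecutive gaps", and check that an isomorphism must preserve it up to a finite shift.
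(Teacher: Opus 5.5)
Your Step 2 fails, and Steps 2--3 carry the whole proof.

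\begin{itemize}
\item The claim that $X^i\setminus U$ is finite for every neighbourhood $U$ of $J_i$ is false for two of the three target shapes. In $\mathbb B$ such a set contains whole $\mathcal D$-intervals; in $\mathbb D$ it contains a Cantor set.
\item The configuration you try to refute (a limit point, a $\mathcal D$-interval or a clopen Cantor piece separated from every $J_i$ by a gap) is present in $\mathbb B$ and $\mathbb D$ themselves. So if your argument were valid, it would show that $\mathbb B$ and $\mathbb D$ are not $t$-sets, although they are.
\item The precise failure is that your second copy cannot exist. If $h\colon X\to Y$ is a homeomorphism and $J\in\mathcal B_X$, then $h[J]=[a',b']$ is a $Y$-interval with no free endpoint. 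So $a'$ and $b'$ are limits of points of $Y\setminus[a',b']$, which forces $\inf Y<a'$ and $b'<\sup Y$. An endpoint of a $\mathcal B$-interval is never an extreme point of any copy, so there is nothing to contrast with the copy produced by Lemma \ref{le9}.
\end{itemize}

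Step 3 builds on the block picture of Step 2 and, as you say yourself, is not carried out.

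The missing idea is a global invariant at the extreme points. This is what the paper's short proof relies on; compare the proofs of Propositions \ref{p16} and \ref{p17}.

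Since $\mathcal C_Y=\emptyset$ for every copy $Y$ (Proposition \ref{p11}), each of $\inf Y$ and $\sup Y$ is one of three types: an isolated point, a free endpoint of a $\mathcal D_Y$-interval, or a non-isolated trivial component. A homeomorphism of $\mathbb R$ carrying one copy onto another is monotone. Hence the unordered pair of the types of $\inf$ and $\sup$ is the same for all copies of a $t$-set.

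Clopen shifts, Lemma \ref{le9} and Corollary \ref{c10} let you put any $\mathcal D$-interval or trivial component at an extreme, and any two of them at both extremes. The set $X\setminus\bigcup\mathcal B_X$ has infinitely many components, because the endpoints of the $J_i$ are not free and $\mathcal B_X$ is finite. If two kinds $P\neq R$ occurred, one of them, say $P$, would occur at least twice. Then copies with extreme types $\{P,P\}$ and $\{P,R\}$ would both exist, which is impossible. So only one kind occurs, and the three cases close quickly:
\begin{itemize}
\item Only isolated points: an accumulation point outside $\bigcup\mathcal B_X$ would be a non-isolated trivial component. So they accumulate only at endpoints of the $J_i$, and each $X^i$ is a copy of $\mathbb A$.
\item Only $\mathcal D$-intervals: the same reasoning shows each $X^i$ is a copy of $\mathbb B$.
\item Only non-isolated trivial components: $X\setminus\bigcup_i\operatorname{int}J_i$ is a Cantor set, and each $X^i$ is a copy of $\mathbb D$.
\end{itemize}
This makes your Step 3 unnecessary.

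Note also that moving a single exceptional block to $\inf X$, as in your Step 3, is not by itself a contradiction; it is the unordered pair that matters. In $\mathbb A\cup[5,6]$, moving $[5,6]$ to the far left only produces a mirror image of the original. The contradiction comes from a copy with isolated points at both ends.
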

\begin{proof} Suppose that $X\subseteq \mathbb R$ is a $ t $-set  and $\mathcal B_X\not=\emptyset$ is finite. Then endpoints of each $I\in \mathcal B_X$ are accumulated  either by isolated points or by intervals from $\mathcal D_X$, or by closed-open copies of the Cantor set.   
 If $X$ contains an isolated point, then $X$ has to contain infinitely many isolated points and $\mathcal I_X= \mathcal B_X$, and $X$ contains no  closed-open copy of the Cantor set. Thus, $X$ has to be  a union of finitely many pairwise disjoint copies of  $\mathbb A$. To put it precisely, $X$  is a union of 
$|\mathcal B_X|$ many copies of $\mathbb A$.  The proofs for the remaining cases are analogous, i.e.,  if $X\setminus \bigcup \mathcal B_X$ contains a non-trivial interval, then $X$ has to be as in the case  (2); but  if  $X\setminus \bigcup \mathcal I_X$ contains a limit point, then $X$ has to contain a closed-open copy of the Cantor set, so $X$ is as in the case  (3).
\end{proof}

In the proof above, and in several proofs given below, we deliberately omit certain details because they are common knowledge or would constitute repetition.

\section{When  $\mathcal B_X$ is scattered and infinite}  
We are left to discuss other  $ t $-sets $X$, when $\mathcal B_X$ is infinite. In Proposition  \ref{p14},
  a new characterization of $M$-Cantorvals  is given. Let's add another one, which uses the order topology.

\begin{pro}\label{p20}
 If  $X$ is a $ t $-set and the order topology on $\mathcal Q_X$  is not scattered, then  $X$ is  an $M$-Cantorval or there exists a  closed-open subset of $X$ which is a copy of the Cantor set. \end{pro}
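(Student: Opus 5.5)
The plan is to argue by contraposition inside the class of $t$-sets. Assume $X$ is a $t$-set, the order topology on $\mathcal Q_X$ is not scattered, and no closed-open subset of $X$ is a copy of the Cantor set. We show that $\mathcal B_X$ interlaces $\mathcal Q_X$, and then Proposition \ref{p14} gives that $X$ is an $M$-Cantorval. By Proposition \ref{p11} we already know $\mathcal C_X=\emptyset$, so $\mathcal I_X=\mathcal B_X\cup\mathcal D_X$.

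\emph{First step: a bad trivial component.} Since $\mathcal Q_X$ is not scattered, there is a nonempty dense-in-itself subset $\mathcal P\subseteq\mathcal Q_X$. Between any two $X$-intervals there is an $X$-gap, so $\mathcal P$ contains infinitely many gaps, and any element of $\mathcal P$ is a limit of other elements of $\mathcal P$. Hence $X$ has trivial components $\{y\}$ that are limits, from at least one side, of infinitely many gaps; in particular $X$ is infinite and not a finite union of intervals. Such a $y$ is a trivial component, so by Lemma \ref{le9} and Corollary \ref{c10} there is a copy $X'$ of $X$ in which $\inf X'$ (and, if needed, $\sup X'$) is the image of such a non-isolated trivial component.

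\emph{Second step: exclude isolated points and free endpoints.} Suppose $X$ has an isolated point $a$. Using Corollary \ref{c10}, or shifting the closed-open set $\{a\}$ to the left of $\inf X$, we obtain a copy $X''$ with $\inf X''$ isolated. A homeomorphism of $\mathbb R$ carrying $X'$ onto $X''$ must send $\inf X'$ to $\inf X''$ or to $\sup X''$. The first is impossible because $\inf X'$ is not isolated; I expect the second can be ruled out by arranging $\sup X'$ to be non-isolated as well, via Corollary \ref{c10} applied to two distinct non-isolated trivial components. This contradicts the $t$-set property, so $X$ has no isolated points. The same argument, with ``isolated point'' replaced by ``free endpoint of some $J\in\mathcal D_X$'', shows $\mathcal D_X=\emptyset$; here the copy $X''$ is obtained by moving the closed-open set $J$ to the far left. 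Hence $\mathcal I_X=\mathcal B_X$.

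\emph{Third step: interlacing.} Suppose $\mathcal B_X$ does not interlace $\mathcal Q_X$. Then there are $K<L$ in $\mathcal Q_X$ with no element of $\mathcal B_X$ strictly between them. I will choose gaps $G\le K$ and $G'\ge L$, adjacent to $K$ and $L$ or equal to them, so that $U=X\cap(\sup G,\inf G')$ is closed-open in $X$ and nonempty; this uses that two distinct $X$-elements of $\mathcal Q_X$ always enclose some point of $X$. The set $U$ contains no $X$-interval, since $\mathcal I_X=\mathcal B_X$, and no isolated point. So $U$ is compact, zero-dimensional and dense-in-itself, i.e. a closed-open copy of the Cantor set, contradicting our assumption. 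Hence $\mathcal B_X$ interlaces $\mathcal Q_X$. Since $X\neq\emptyset$, Proposition \ref{p14} shows $X$ is an $M$-Cantorval.

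\emph{Main obstacle.} I expect the delicate point to be the second step: producing two copies that genuinely cannot be matched by a homeomorphism of $\mathbb R$, taking care of the reflection $x\mapsto -x$ by controlling both $\inf$ and $\sup$. I would handle this with Corollary \ref{c10}, placing non-isolated trivial components at both ends in one copy and an isolated point (or a free endpoint) at both ends in the other. The other point needing care is choosing $G,G'$ in the third step so that $U$ is really closed-open, rather than just bounded by elements $K,L$ that might be intervals.
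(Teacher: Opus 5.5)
Your plan follows the paper's proof. Proposition \ref{p11} and the $t$-set property, with non-isolated trivial components placed at both $\inf$ and $\sup$ of one copy via Corollary \ref{c10}, exclude isolated points and $\mathcal D_X$. Your second step spells out what the paper does in one sentence, and it works as you expect: a homeomorphism of $\mathbb R$ maps $\{\inf X',\sup X'\}$ onto $\{\inf X'',\sup X''\}$. The only real difference is the ending. The paper deduces that $X$ is regularly closed and applies Proposition \ref{p18}, tacitly using that $\mathcal B_X$ is infinite (otherwise $X$ would be a finite union of intervals and $\mathcal Q_X$ finite). You instead show that $\mathcal B_X$ interlaces $\mathcal Q_X$ and apply Proposition \ref{p14}. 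Given $\mathcal I_X=\mathcal B_X$ and no isolated points, the two intermediate conditions are equivalent, so this is a mild variation. Yours is slightly more self-contained, since Proposition \ref{p14} needs only $X\neq\emptyset$ and your last step is purely combinatorial.

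Two places need repair, though neither changes the approach.

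\emph{First step.} Your justification does not produce trivial components. A point that is a limit of gaps need not be one: the non-free right endpoint of any $K\in\mathcal B_X$ is a limit of gaps. Also, a dense-in-itself $\mathcal P$ need not contain any gap; take $\mathcal B_X$ for an $M$-Cantorval. What you need is two distinct non-isolated trivial components, and the clean route is through cuts. Every cut $(A,B)$ of $\mathcal Q_X$ with no maximum in $A$ and no minimum in $B$ determines $y=\sup\bigcup A=\inf\bigcup B$. This $y$ lies in $X$, is a limit of the points $\sup Q<y$ with $Q\in A$, and lies in no element of $\mathcal Q_X$. So $\{y\}$ is a non-isolated trivial component, and distinct cuts give distinct points. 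If there were only countably many such cuts, adding them would give a countable compact ordered space containing $\mathcal Q_X$ as a subspace. That space is scattered, hence so is $\mathcal Q_X$, a contradiction. This is what the paper's ``continuum many trivial components'' amounts to.

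\emph{Third step.} Your choice of $G,G'$ is backwards: with $G\le K$ an interval $K$ would lie inside $U$, and a $K\in\mathcal B_X$ has no adjacent gap at all. Take $K\le G<G'\le L$ instead.
\begin{itemize}
\item If $K$ is a gap, put $G=K$.
\item If $K\in\mathcal B_X$, then $\sup K$ is not free. So it is approached from the right by points of $X\setminus K$, hence by gaps. Since $\inf L>\sup K$, these gaps eventually lie below $L$; pick $G$ among them.
\item Treat $L$ symmetrically, choosing $G\neq G'$.
\end{itemize}
Then use $U=X\cap[\sup G,\inf G']=X\cap(\inf G,\sup G')$. With the open interval $(\sup G,\inf G')$ the set is not closed, because $\sup G$ is a non-isolated point of $X$ accumulated from the right. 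Every $X$-interval meeting this $U$ lies inside $U$, hence strictly between $K$ and $L$. Therefore $U$ is a nonempty clopen copy of the Cantor set, as you say.
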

\begin{proof} When $\mathcal Q_X$ contains a  subset homeomorphic to the rational numbers, i.e., $\mathcal Q_X$ is not scattered, then there exist  continuum many trivial components in $X$.  So, the set $\{\inf X, \sup X\}$ could be populated by two trivial components. Hence   $\mathcal D_X$ has to be  empty and   there is no isolated point in $X$, since $X$ is a $t$-set. But if  no  closed-open subset of $X$  is a copy of the Cantor set, then $X$ has to be regularly closed and then Proposition \ref{p18} works, i.e., $X$ is an $M$-Cantorval.
 \end{proof}

\begin{cor}\label{c21} If  $X$ is a $ t $-set and $\mathcal B_X$ is not scattered, then $X$ is an $M$-Cantorval. \end{cor}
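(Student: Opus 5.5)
Since $\mathcal B_X\subseteq\mathcal Q_X$ carries the induced order, the plan is to reduce the corollary to Proposition \ref{p20} and then exclude its second alternative.

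First I check that non-scatteredness passes from $\mathcal B_X$ to $\mathcal Q_X$. If $\mathcal B_X$ is not scattered, it contains a subset $\mathcal R$ that is densely ordered in itself; then $\mathcal R$, being countable and dense, is order-isomorphic to (a subset containing a copy of) the rationals. Between any two elements of $\mathcal R$ there are further elements of $\mathcal R$, hence also gaps, so $\mathcal R$ is dense-in-itself in the order topology of $\mathcal Q_X$ as well. Thus $\mathcal Q_X$ is not scattered, and Proposition \ref{p20} tells us that $X$ is an $M$-Cantorval or $X$ has a closed-open subset $K$ which is a copy of the Cantor set.

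It remains to rule out the second case when $\mathcal B_X$ is not scattered. Since $X$ is a $t$-set, Proposition \ref{p11} gives $\mathcal C_X=\emptyset$. As in the proof of Proposition \ref{p20}, $\mathcal D_X=\emptyset$ and $X$ has no isolated points, because a densely ordered $\mathcal B_X$ yields continuum many trivial components, which by Corollary \ref{c10} can be placed at $\inf X$ and $\sup X$. Now assume $K$ exists, and build two copies of $X$.

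\emph{First copy.} Use Corollary \ref{c10} on two trivial components lying in the closure of $\bigcup\mathcal B_X$ (accumulation points of $B$-intervals). The resulting copy $X_1$ has $\inf X_1$ and $\sup X_1$ both accumulated by $X_1$-intervals.

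\emph{Second copy.} Shift the closed-open set $K$ to the right of $\sup X$. The resulting copy $X_2$ has $\sup X_2\in K$, and a neighbourhood of $\sup X_2$ contains no $X_2$-interval.

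A homeomorphism of $\mathbb R$ is monotone, so it sends the pair $\{\inf,\sup\}$ to itself and preserves whether a point is accumulated by intervals. Hence no homeomorphism of $\mathbb R$ carries $X_1$ onto $X_2$, contradicting that $X$ is a $t$-set. Therefore $X$ is an $M$-Cantorval.

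The main obstacle is the first copy. One must check that Corollary \ref{c10}, applied to components that are limits of $B$-intervals, really keeps those points accumulated by intervals after moving them to the extremes; this is the step I would verify most carefully. I would handle it by choosing points of $\cl\bigcup\mathcal B_X$ that are two-sided limits of $B$-intervals, which is possible because $\mathcal B_X$ contains a dense suborder. The shrink-and-move construction of Lemma \ref{le9} keeps intervals from both sides of such a point accumulating at its image.
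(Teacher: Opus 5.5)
Your overall strategy is the paper's. You reduce to Proposition~\ref{p20}, then exclude the closed-open Cantor piece by comparing two copies of $X$ that differ in whether an extreme point is accumulated by intervals. The paper uses one copy with $\inf Y\in\cl_{\mathbb R}(\bigcup\mathcal B_Y)$ and one with both extremes outside that closure. Your mirror-image pair works equally well, and your remark that the construction of Lemma~\ref{le9} keeps the moved point a limit of intervals is correct.

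The gap is in your first step. It does not follow from $\mathcal R\subseteq\mathcal B_X$ being densely ordered in itself that $\mathcal R$ is dense-in-itself in the order topology of $\mathcal Q_X$. Elements of $\mathcal Q_X\setminus\mathcal R$ can cut a member of $\mathcal R$ off from the rest of $\mathcal R$.

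Here is a concrete counterexample. Apply Theorem~\ref{tr1} to the order $1+\mathbb Q\times(\mathbb Z+1+\mathbb Z)+1$ (order sums, lexicographic product). Put the points $(q,*)$ coming from the middle summand $1$ into $\mathcal I$ and everything else into $\mathcal G$. Since $\mathbb Z$ has no greatest or least element, no gap is an immediate neighbour of $(q,*)$. So each image $I_q$ is a $B$-interval and $\mathcal B_X\cong\mathbb Q$. But any open interval of $\mathcal Q_X$ with endpoints in the two $\mathbb Z$-chains of gaps flanking $I_q$ meets $\mathcal B_X$ only in $I_q$. Moreover, every gap other than $\min$ and $\max$ has both immediate neighbours. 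One computes $\mathcal Q_X^{(1)}=\{\min,\max\}\cup\mathcal B_X$, $\mathcal Q_X^{(2)}=\{\min,\max\}$ and $\mathcal Q_X^{(3)}=\emptyset$. So $\mathcal Q_X$ is scattered although $\mathcal B_X$ (in its own order) is not. The hypothesis of Proposition~\ref{p20} therefore cannot be reached without using that $X$ is a $t$-set; this $X$ has isolated points.

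The repair is already in your second paragraph; you only need to do it first. All but countably many cuts of $\mathcal R$ determine a single real point, which is a two-sided limit of members of $\mathcal R$. Hence you get continuum many non-isolated trivial components. Proposition~\ref{p11} and Corollary~\ref{c10} then give $\mathcal C_X=\mathcal D_X=\emptyset$ and no isolated points.

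Consequently no element of $\mathcal Q_X$ has an immediate predecessor or successor. Two adjacent gaps would need an isolated point between them, and a gap adjacent to an interval would need a free endpoint. So $\mathcal Q_X$ is infinite and densely ordered, hence not scattered, and Proposition~\ref{p20} applies. Alternatively, bypass its statement and use its proof, which only needs many trivial components.

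The paper's own proof simply says ``by the above proposition'' and does not address this point either.
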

\begin{proof} Since $\mathcal B_X$ is not scattered, there exist a trivial component $\{a\} \subset X$ such that $a \in \cl_\mathbb R(\bigcup \mathcal B_X)$.
 If $X$ contains a closed-open  copy of the Cantor set, then  $\mathbb R$ contains copies $Y$ and $Z$ of $X$ such that $$\textstyle  \inf Y \in \cl_\mathbb R (\bigcup \mathcal B_Y) \mbox{ and } \{\inf Z, \sup Z\} \cap \cl_\mathbb R (\bigcup \mathcal B_Z) = \emptyset, $$ so $X$ can  not be a $t$-set.  By the above proposition, $X$ has to be  an $M$-Cantorval. \end{proof}

Note that, if  $X$ is a $ t $-set and $\mathcal I_X$ is not scattered, then  $\mathcal B_X$ is also not scattered. Indeed, then $\mathcal C_X=\emptyset$ and $\mathcal D_X$, being  discrete, is countable, and $\mathcal I_X$ is uncountable.

\begin{pro}\label{p22} 
If  $X$ is a $ t $-set such that $\mathcal B_X$ is scattered and infinite, then  $X$ contains a closed-open set that is a copy of either $\mathbb A$ 
or $\mathbb B$, or $\mathbb D$. Moreover, $\mathcal B_X$
is compact in the topology induced from the order topology on $\mathcal Q_X$.
 \end{pro}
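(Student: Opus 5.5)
Since $\mathcal B_X$ is infinite and scattered, I would first locate an $X$-interval from $\mathcal B_X$ that is isolated in the subspace $\mathcal B_X$ of $\mathcal Q_X$. A nonempty scattered space has isolated points, so such a $J=[a,b]\in\mathcal B_X$ exists. Isolation gives two $X$-gaps $G_0<J<G_1$ with no other element of $\mathcal B_X$ between $G_0$ and $G_1$. Pick points $q\in G_0$ and $p\in G_1$. Then $V=X\cap[q,p]$ is closed-open in $X$, and $\mathcal B_V=\{J\}$. Since $J$ has no free endpoint, $a$ and $b$ are accumulated from outside $J$ by points of $V$. By Proposition \ref{p11}, $\mathcal C_X=\emptyset$.

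Next I would classify what accumulates at $a$ and at $b$, following the proof of Proposition \ref{p19}. I would use the $t$-set property through Lemma \ref{le9} and Corollary \ref{c10}. If, near $a$, the set $V$ contained two different kinds of material (isolated points, intervals from $\mathcal D_X$, or closed-open Cantor pieces), or if these kinds differed between $a$ and $b$, or from those near some other isolated element of $\mathcal B_X$, then I would rearrange closed-open pieces. This produces two copies $Y$ and $Z$ of $X$ in which, for instance, $\inf Y$ is an isolated point while $\inf Z$ lies in a Cantor piece, or similar. No homeomorphism of $\mathbb R$ carries one onto the other, so this contradicts that $X$ is a $t$-set. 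The conclusion is that $a$ and $b$ are accumulated by a single type. After shrinking $[q,p]$ to a neighbourhood where only that type occurs besides $J$, the set $V$ is a copy of $\mathbb A$, $\mathbb B$ or $\mathbb D$: monotone sequences of gaps with isolated points, $\mathcal D$-intervals, or Cantor sets between them, exactly as in the argument that these sets are $t$-sets.

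For the \emph{moreover} part, I would argue by contradiction that $\mathcal B_X$ is compact in the order topology. Suppose it is not. Since $\mathcal Q_X$ is countable and the topology is first countable, I would take a monotone sequence $(J_n)$ in $\mathcal B_X$ with no accumulation point in $\mathcal B_X$. Its supremum, as a cut, is not an element of $\mathcal B_X$. It is therefore realised either by an $X$-gap or by a trivial component $\{c\}$ of $X$ with $c=\lim J_n$.
\begin{itemize}
\item If it is a gap, then $\sup\bigcup J_n$ is the left endpoint of a gap and is a limit of intervals. In the order $\mathcal Q_X$, however, the cut would then have no minimum from the gap side, which is impossible. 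So this case collapses to the next one.
\item If it is a point $c$, then $c$ is a trivial component that is a limit of $\mathcal B_X$-intervals. Corollary \ref{c10} and Lemma \ref{le9} then give a copy $Y$ with $\inf Y$ such a limit point, whereas isolated elements of $\mathcal B_X$ give copies $Z$ whose extreme points sit in a closed-open copy of $\mathbb A$, $\mathbb B$ or $\mathbb D$. This again contradicts tameness, mirroring the proof of Corollary \ref{c21}.
\end{itemize}

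The main obstacle will be the compactness claim, and specifically making the rearrangement rigorous. One must check that cutting $X$ at gaps near $c$ and moving closed-open pieces really places $c$ at $\inf Y$; this needs $c$ to be a trivial component approached through gaps, which Lemma \ref{le9} supplies. One must also produce a genuinely inequivalent copy $Z$, using an isolated $J$ and Proposition \ref{co7} to see that no monotone bijection of $\mathcal Q$'s preserving $\mathcal I$ exists.
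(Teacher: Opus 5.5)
Your proposal follows the paper's route: an element of $\mathcal B_X$ isolated in $\mathcal B_X$ gives, once you classify what accumulates at its endpoints, a closed-open copy of $\mathbb A$, $\mathbb B$ or $\mathbb D$; non-compactness is then refuted by using Lemma~\ref{le9} to push a trivial component of $\cl_X\bigcup\mathcal B_X\setminus\bigcup\mathcal B_X$ to an extreme point and comparing with copies whose extreme points lie in those closed-open pieces, and you spell out more of the tameness bookkeeping than the paper does. Two small points need repair. First, the $\mathcal Q_X$-supremum of $(J_n)$ can perfectly well be a gap, so your ``impossible'' is wrong; this is harmless, because in every case the real limit of the $J_n$ is a non-isolated trivial component, and that is all you use. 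Second, your list of kinds of accumulating material should also include non-isolated trivial components lying outside Cantor pieces, such as limits of isolated points or of $\mathcal D_X$-intervals; the same comparison of both extreme points (Lemma~\ref{le9}, Corollary~\ref{c10} and clopen rearrangements) rules these out.
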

\begin{proof} 
Since  the order topology on $\mathcal B_X$ induced by $<$ is  scattered, there exists an $X$-interval $[a,b]\in \mathcal B_X$ that is isolated in $\mathcal B_X$ and there exist points $a_n \in X \setminus [a,b]$ such that $\lim\limits_{n\to +\infty} a_n =a$. If each point $a_n$ is isolated  in $X$, then   $X$ contains a closed-open set that is a copy of $\mathbb A$.
If the sets $\{a_n\}$ are trivial components of $X$ and no $a_n$ is an isolated point, then   $X$ contains a closed-open set that is a copy of $\mathbb D$. But if the points $a_n$ belong to $\bigcup \mathcal D_X$, then   $X$ contains a closed-open set that is a copy of $\mathbb B$.

Suppose  $\mathcal B_X$
is not compact in the topology induced from the order topology on $\mathcal Q_X$. If $x\in \cl_X \bigcup \mathcal B_X \setminus \bigcup \mathcal B_X$, then by Lemma \ref{le9}, there exists a homeomorphism   $f\colon X \to f[X] \subset  \mathbb R$   with $f(x) = \sup f[X]$. So, $\{ f(x)\}$ is a trivial component of 
$f[X]$, where $f(x)$  is neither an isolated point in $f[X]$, nor an endpoint of $f[X]$-interval.  Since  no closed-open subset of $X$ is a copy of the Cantor set. We get a contradiction with the first part of this proof, because then $X$ contains neither a copy of $\mathbb A$ nor a copy of $\mathbb B$, nor a copy of $\mathbb D$.  
   \end{proof}

Let us recall a notion of  \textit{the derived set of E of order $\alpha$, } compare \cite[p. 64]{si}. Namely, $E= E^{(0)}$ and $E^{(\alpha +1)}$ denotes the derived set of $E^{(\alpha)}$, and $E^{(\alpha )}= \bigcap\{E^{(\beta)}\colon \beta < \alpha\}$ for a limit ordinal $\alpha$. In the following we will use derived sets with respect to the order topology for $(\mathcal I_X, <)$, where $X\subset \mathbb R$ is a compact set and $\mathcal C_X=\emptyset$. 

For a compact subset $X\subset \mathbb R$ we will need to consider the following condition.

 $(\star)$ \hspace{0.1cm} \textit{If $I\in \mathcal I_X$, then there exist an ordinal  $\alpha$ and $X$-gaps $J_0, J_1 \in \mathcal G_X$ such that 
 $(\mathcal I_X \cap [J_0, I])^{(\alpha)} = (\mathcal I_X \cap [I, J_1])^{(\alpha)}=\{ I\}.$} 

\begin{lem}\label{l23}
 If  $X$ is a $ t $-set such that  $\mathcal I_X$  equipped with  the order topology is scattered, then $X$ satisfies condition $(\star)$. 
   \end{lem}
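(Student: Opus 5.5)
The plan is to argue by contradiction using the homogeneity-type arguments already used in Propositions \ref{p17}--\ref{p22}. Let $X$ be a $t$-set with $\mathcal I_X$ scattered. By Proposition \ref{p11} we have $\mathcal C_X=\emptyset$, so $\mathcal I_X=\mathcal B_X\cup\mathcal D_X$ and the derived sets used in $(\star)$ make sense. Fix $I\in\mathcal I_X$. Since $\mathcal I_X$ is scattered, $I$ has a Cantor--Bendixson rank: there is an ordinal $\alpha$ with $I\in\mathcal I_X^{(\alpha)}\setminus\mathcal I_X^{(\alpha+1)}$. Because $I$ is isolated in $\mathcal I_X^{(\alpha)}$, there is an order-open neighbourhood of $I$ in $\mathcal Q_X$ meeting $\mathcal I_X^{(\alpha)}$ only in $I$. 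Since gaps interlace intervals and $\mathcal G_X$ contains the endpoints of $\mathcal Q_X$, this neighbourhood can be shrunk to $[J_0,J_1]$ with $J_0<I<J_1$ and $J_0,J_1\in\mathcal G_X$. Derived sets of an open (or clopen) piece are computed locally, so $(\mathcal I_X\cap[J_0,J_1])^{(\alpha)}=\{I\}$. What remains is to show that the \emph{one-sided} pieces $[J_0,I]$ and $[I,J_1]$ have the \emph{same} rank $\alpha$ at $I$, i.e.\ that the rank is attained from both sides.

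So suppose, towards a contradiction, that $(\mathcal I_X\cap[J_0,I])^{(\alpha)}=\{I\}$ but on the right side $I$ already has rank $\beta<\alpha$ for every choice of $J_1$ (or the symmetric case). Equivalently, $I$ is approached from the left by intervals of every rank $<\alpha$, but from the right only by intervals of rank $<\beta$ near $I$. The idea is to produce two copies of $X$ that cannot be mapped onto each other by a homeomorphism of $\mathbb R$. Proposition \ref{co7} shows such a homeomorphism would induce a monotone bijection of $\mathcal Q_X$ preserving $\mathcal I_X$, and hence derived sets and ranks.

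\begin{itemize}
\item For the first copy, use clopen pieces of $X$ cut at gaps, together with shifts and rotations in the spirit of Lemma \ref{le8} and Corollary \ref{c10}. Cut $X$ at $J_0$ and $J_1$, and rotate the clopen block $X\cap\bigcup[J_0,J_1]$; the rotation is legal because the block is clopen and bounded by gaps. In the resulting copy $X_1$, the interval $I$ is approached with full rank $\alpha$ from the right and rank $\beta$ from the left.
\item For the second copy $X_2$, take a different arrangement. If $\mathcal I_X^{(\alpha)}$ contains another interval $I'$ with the original asymmetry, rotate only one of the two blocks; this makes the multiset of ``asymmetry orientations'' of rank-$\alpha$ intervals differ.
\end{itemize}

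A monotone map either preserves all orientations or reverses all of them, as in the argument of Proposition \ref{p11} with left/right free endpoints. So once one block is rotated and another is not, no monotone bijection can match the two copies, and we reach the contradiction.

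The main obstacle is the case where $I$ is the only asymmetric interval of rank $\alpha$ (for instance when $\mathcal I_X^{(\alpha)}$ is finite or a singleton). A global reflection could then undo a single rotation. For this case I would instead move the block containing $I$ to an end of $X$. Using that $\mathcal G_X$ is infinite, split $X$ into three clopen pieces as in Proposition \ref{p11}. In one copy, place the block so that the rank-$\beta$ side faces $\sup X$ and the remaining material lies to the left. In the other copy, place the block in the middle with the rank-$\beta$ side facing other parts of $X$. The position of the unique rank-$\alpha$ asymmetric interval relative to the two ends, together with the side on which the extremal gap lies, is a monotone invariant distinguishing the two copies. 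Checking that such a configuration always yields genuinely non-equivalent copies, including when $X\cap\bigcup[J_0,J_1]$ is all of $X$, is the delicate bookkeeping step.
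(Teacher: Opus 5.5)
\textbf{This is a different route from the paper's, and it has genuine gaps in both branches of your case split.}

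Your reduction is correct. Since $\mathcal C_X=\emptyset$, condition $(\star)$ at $I$ says exactly that the two one-sided Cantor--Bendixson ranks of $I$ in $\mathcal I_X$ coincide. A monotone bijection of $\mathcal Q$ preserving intervals preserves these ranks if it is increasing and swaps them if it is decreasing. But neither branch actually produces two inequivalent copies.

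\emph{The branch with a second interval of the same asymmetry.} You claim that rotating one block and not another rules out every monotone bijection. This is false. A monotone map only has to respect the pattern of orientations up to order isomorphism and global reversal, and one rotation need not change that pattern. For instance, let the intervals of this type be $I_n$, $n\in\mathbb Z$, lying in scaled copies of a single clopen tile that accumulate at two points, with one orientation for $n<0$ and the other for $n\geq 0$. Rotating the tile of $I_{-1}$ merely shifts the configuration by one tile, and that shift is induced by a homeomorphism of $\mathbb R$. The robust invariant is ``all intervals of this type have the same orientation''. When infinitely many intervals of both orientations occur, producing such a copy needs infinitely many rotations and a convergence argument as in Lemma \ref{le8}, which you do not supply.

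\emph{The branch where $I$ is unique.} Here you leave the decisive step open, and the suggested invariant (position relative to the two ends) is not defined. As stated it can fail: clopen pieces placed beyond the low-rank side that contain only intervals of rank below the low rank merge into the one-sided structure there and change nothing. A working choice is to place beyond the low-rank side a clopen piece containing an interval of rank at least the low rank; such intervals accumulate at $I$ from the high side. A decreasing map would have to send $I$ to an interval of the opposite orientation, and none exists. An increasing map would have to send $I$ to its copy and match the low side with and without such an interval.

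\emph{How the paper avoids both difficulties.} It needs neither orientation bookkeeping nor a case split. Let $\alpha$ be the \emph{smaller} one-sided rank, on the side of $J_0$. Since $\mathcal B_X$ is compact (Proposition \ref{p22}), $\mathcal I^{(\alpha)}$ has a least and a greatest element.
\begin{itemize}
\item Moving $X\cap(-\infty,\sup J_0)$ to the right of $\sup X$ gives a copy $X_1$ with $I=\min\mathcal I^{(\alpha)}_{X_1}$. This minimum is not isolated in $\mathcal I^{(\alpha)}_{X_1}$: nothing of rank $\geq\alpha$ remains on its left, and its high-rank side is intact.
\item Take $K$ of rank exactly $\alpha$ between $I$ and $J_1$, and a small clopen piece $Z$ with $\mathcal I_Z^{(\alpha)}=\{K\}$. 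Moving $Z$ to the far left gives $X_2$, whose minimum $K'$ is isolated.
\end{itemize}
An increasing map fails at the minima. For a decreasing map, the maxima of $\mathcal I^{(\alpha)}_{X_1}$ and $\mathcal I^{(\alpha)}_{X_2}$ are the same interval with the same neighbourhood; this is why $J_1$ is chosen with $\mathcal I_X^{(\alpha)}\cap[J_1,\rightarrow)\neq\emptyset$. That interval would have to be isolated, as $f^{-1}(K')$, and non-isolated, as $f(I)$, which is a contradiction.
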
 
\begin{proof} Since $X$ is assumed to be a $t$-set, then, by Proposition \ref{p11}, we have $\mathcal C_X=\emptyset$. When $I\in \mathcal D_X$, then $I$ is an isolated point with respect to order topology for $(\mathcal I_X,< )$, so $\alpha =0$ is good.
  When the family $\mathcal B_X$ is finite, then apply Proposition \ref{p19} and check that then the thesis  is fulfilled. So, we can proceed assuming  that $\mathcal B_X$ is infinite.
	
	If $ I\in \mathcal B_X$  is an  $X$-interval that does not satisfy the thesis, then   choose $J_0\in \mathcal G_X $ and $ J_1\in \mathcal G_X$ such that 
	 $$\mathcal I_X^{(\alpha)} \cap [J_0, I] = \{I\} = \mathcal I_X^{(\beta)} \cap [I, J_1] \mbox{ and } \mathcal I_X^{(\alpha)} \cap [J_1,\rightarrow) \not= \emptyset,$$
	where $0 <\alpha <\beta$ (when $\beta < \alpha $, the reasoning is symmetrical). Choose an interval $K$ which is isolated in $\mathcal I_X^{(\alpha)} \cap [I, J_1]$. Let $Z=(a,b) \cap X$ be such that $a,b \not\in X$ and $ \mathcal I_Z^{(\alpha)}=\{K\}$. Let $X_1$ be a copy of $X$ which is formed from $X$ by moving $(-\infty, \sup J_0) \cap X$ to the right of $\sup X$. Thus $\inf \mathcal I_{X_1}^{(\alpha)} =I$. Let $X_2$ be a copy of $X$ which is formed from $X_1$ by moving $Z$ to the left  of   $\inf X$. Thus $\inf \mathcal I_{X_2}^{(\alpha)} =K'$, where $K'$ is an isolated point in $\mathcal I_{X_2}^{(\alpha)}$. Suppose  $F\colon \mathbb R \to \mathbb R$ is a monotone bijection such that $F[X_1] =X_2$.  If $f(J) = F[J]$, for every $J\in \mathcal I_{X_1}$, then we get that $f\colon \mathcal I_{X_1}\to\mathcal I_{X_2}$ is a monotone bijection.  Since $\inf \mathcal I_{X_1}^{(\alpha)} =I \in  \mathcal I_{X_1}^{(\beta)}$, then $I$ is not isolated in $\mathcal I_{X_1}^{(\alpha)}$, but $K'$ is isolated in $\mathcal I_{X_2}^{(\alpha)}$. Therefore $f$ can not be increasing. Now, suppose that $f$ is decreasing. We get that the point $$f^{-1}(K')=\sup \mathcal I_{X_1}^{(\alpha)} =\sup \mathcal I_{X_2}^{(\alpha)}= f(I)$$ is both  isolated  and not isolated  in $\mathcal I_{X_1}^{(\alpha)}$; a contradiction which completes the proof.
  \end{proof}

	\section{A counterpart of Mazurkiewicz-Sierpiński theorem. }
S. Mazurkiewicz and W. Sierpiński \cite{ms} proved that a scattered, metric and compact space is homeomorphic to a countable ordinal $\omega^\alpha n +1$. A counterpart of this result is presented  below.

Let $(X,<)$ be an ordered space. Put $$ S(X)= \{\frac1n\colon n\ne 0\} \times X \cup (\{0\} \times [0,1]), $$ where $n\ne 0$ means that $n$ is a non-zero integer.
Ordering $S(X)$  lexicographically (the set $\{\frac1n\colon n\ne 0\}$ inherits the order from  $\mathbb R)$, we get an ordered space $(S(X),<)$. Wherein
  \begin{itemize}
	\item [] The space $(S(\{0\}),< )$ is isomorphic to $(\mathbb A, <)$,
	\item [] The space $(S([0,1]),< )$ is isomorphic to $(\mathbb B, <)$,
	\item [] The space $(S(\mbox{The Cantor set}),< )$ is isomorphic to $\mathbb D$,
	\item [] The space $(S(\mbox{$M$-Cantorval}),< )$ is isomorphic to an $M$-Cantorval.
  \end{itemize}
	Spaces $\mathbb A$, $\mathbb B$, $\mathbb D$ and $M$-Cantorvals are compact, so  the isomorphisms mentioned above are also homeomorphims. 

The operation $X\mapsto S(X)$ can be iterated, namely if $\alpha < \omega_1$ is not a limit ordinal, then put $S_0(X)=X$ and $S_{\alpha +1}(X) =S(S_\alpha(X))$. But if $\alpha < \omega_1$ is  a limit ordinal, then let $\alpha^*$ be a union of two copies of $\alpha$, one decreasing contained in $(0, \rightarrow)$, the other increasing contained in $(\leftarrow, 0)$. For example,  $\omega_0^*$ is a copy of  $\{\frac1n\colon n\ne0\} \subset \mathbb R$. Put $$S_\alpha (X)= \bigcup \{\{\gamma\} \times S_\gamma(X)\colon \gamma \in \alpha^*\} \cup \{0\} \times [0,1], $$   and then equip $S_{\alpha}(X)$ with the lexicographical order.

For the purposes of the next theorems, let $\textbf{X}_0=[0,1]\subset \mathbb R$ and $\textbf{X}_\alpha \subset \mathbb R$ be an isomorphic copy of $S_\alpha([0,1])$, because of Proposition \ref{po4}, we can assume that  $\textbf{X}_\alpha$ is compact.   Note that each set $\textbf{X}_\alpha = \bigcup \mathcal I_{\textbf{X}_\alpha}$ is a regularly closed subset of $\mathbb R$. When we consider $\mathcal I_{\textbf{X}_\alpha}$  with  the order topology, then it is a scattered space and its $\alpha$-th derivative  is a singleton. 

\begin{pro}\label{p24} Let $Y\subseteq \mathbb R$ be a compact and regularly closed set such that $\mathcal I_{{Y}}^{(\alpha)} = \{K\}$. If $Y$ satisfies condition $(\star)$, then $Y$ is a copy of $\mathbf{X}_\alpha$. \end{pro}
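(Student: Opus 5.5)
Transfinite induction on $\alpha<\omega_1$ looks like the natural route: we show that any compact regularly closed $Y$ satisfying $(\star)$ with $\mathcal I_Y^{(\alpha)}=\{K\}$ is a copy of $\mathbf X_\alpha$. By Theorem \ref{tr2} it suffices to build an isomorphism $\mathcal Q_Y\to\mathcal Q_{\mathbf X_\alpha}$ carrying $\mathcal G_Y$ onto $\mathcal G_{\mathbf X_\alpha}$. One could also build the homeomorphism directly, piece by piece on closed-open parts, as in the argument for $\mathbb A,\mathbb B,\mathbb D$ in the previous section.

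The base case $\alpha=0$ is easy. Here $\mathcal I_Y=\{K\}$, and since $Y$ is regularly closed, $Y=K$ is a closed interval, hence a copy of $\mathbf X_0=[0,1]$.

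For the inductive step, write $K=[a,b]$. By $(\star)$ applied to $K$, there are gaps $J_0<K<J_1$ with $(\mathcal I_Y\cap[J_0,K])^{(\alpha)}=(\mathcal I_Y\cap[K,J_1])^{(\alpha)}=\{K\}$. Since $\mathcal I_Y^{(\alpha)}=\{K\}$ already, the real content is that $K$ is a limit of $\beta$-th derivative points from both sides for every $\beta<\alpha$. Since $K$ is the only point of rank $\alpha$, the gaps $\min\mathcal Q_Y$ and $\max\mathcal Q_Y$ can serve as $J_0$ and $J_1$.

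Next, choose sequences of gaps $G_n\nearrow a$ and $G'_n\searrow b$, indexed by $n\neq0$ as in $S$. The aim is that each closed-open piece $Y_n$ of $Y$ lying between consecutive chosen gaps is a copy of $\mathbf X_\gamma$ for the appropriate $\gamma$: $\gamma=\alpha-1$ when $\alpha$ is a successor, and $\gamma$ running cofinally through $\alpha$ along $\alpha^*$ when $\alpha$ is a limit. The key sub-step is choosing the cuts so that each piece satisfies $\mathcal I_{Y_n}^{(\gamma)}$ is a singleton. Since $\mathcal I_Y$ is scattered and compact, each interval of $\mathcal Q_Y$ between two gaps has a finite nonempty top-rank derivative. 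So between two consecutive points of rank $\gamma$ accumulating to $K$ we can cut at a gap, isolating each such point. This is possible because gaps interlace intervals.

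Each piece $Y_n$ is again compact and regularly closed. It inherits $(\star)$ from $Y$, because $(\star)$ is local around each interval and the pieces are bounded by gaps. If the top derivative of a piece had several points, we split it further, or regroup pieces, so that each piece has exactly one point of top rank. By induction each $Y_n$ is then a copy of $\mathbf X_{\gamma_n}$. Gluing the pieces in order yields an isomorphism of $\mathcal Q_Y$ with $\mathcal Q$ of the lexicographic model $S_\alpha([0,1])$, or $S(\mathbf X_{\alpha-1})$ in the successor case, preserving gaps. Applying Theorem \ref{tr2} then gives $Y\cong\mathbf X_\alpha$.

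The main obstacle is matching the rank pattern. In the successor case we must show that every piece has rank exactly $\alpha-1$ with a singleton top. In the limit case the ranks of the pieces must be cofinal in $\alpha$ on both sides, and must match $\alpha^*$ after reindexing. The limit-case bookkeeping is the hard part. Here $(\star)$ supplies cofinality on each side, but pieces may have varying ranks $\gamma<\alpha$ in any order. To handle this, first prove the absorption lemma: a copy of $\mathbf X_\gamma$ glued next to a copy of $\mathbf X_\delta$ by a shared gap, with $\gamma<\delta$, can be regrouped as a single piece that is a copy of $\mathbf X_\delta$ except for an extra finite prefix. Then regroup consecutive pieces so that the sequence of ranks on each side becomes increasing toward $K$ and cofinal in $\alpha$. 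Finally compare with $\alpha^*$, which itself is only well-defined up to such regrouping, again via Theorem \ref{tr2}.
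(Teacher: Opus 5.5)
Your overall scheme matches the paper's. Both argue by transfinite induction: cut $Y$ at gaps accumulating to the endpoints of $K$ into clopen pieces of rank $<\alpha$ with singleton top derivative, apply the induction hypothesis to each piece, and reassemble. You reassemble via Theorem~\ref{tr2}; the paper glues increasing maps directly. Your successor case works as you describe it.

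In the limit case you notice a rank-matching difficulty that the paper passes over: it only encloses $\inf\mathcal I^{(\beta)}_Y$ and $\sup\mathcal I^{(\beta)}_Y$ in windows and invokes the hypothesis. However, your last step does not resolve that difficulty. Merging consecutive pieces only produces blocks whose top ranks are terms of the sequence you started with. Two strictly increasing sequences cofinal in $\alpha$ need not share any term, e.g.\ $\omega+2n$ and $\omega+2n+1$ for $\alpha=\omega\cdot 2$. So regrouping on both sides cannot in general align your rank pattern with the model's.

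You also have to split. Take $\gamma<\delta<\alpha$ and adjacent copies of $\mathbf X_\gamma$ and $\mathbf X_\delta$ separated by a gap. Their union is compact, is a union of intervals, satisfies $(\star)$, and has singleton $\delta$-th derivative. By the induction hypothesis it \emph{is} a copy of $\mathbf X_\delta$. There is no extra prefix, and no separate absorption lemma is needed. Read backwards, this gives $\mathbf X_\delta\cong\mathbf X_\gamma\sqcup\mathbf X_\delta$. That lets you refine both sides to the increasing enumeration of the union of the two rank sets, which is still an $\omega$-sequence cofinal in $\alpha$. After that refinement the pieces correspond one to one.

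Two hypotheses in your argument also need tightening.

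First, gluing via Theorem~\ref{tr2} requires each piece to be carried onto its model by an increasing map, i.e.\ by a gap-preserving isomorphism of the $\mathcal Q$'s. Being a copy of $\mathbf X_\gamma$ only gives a homeomorphism, which need not respect order or gaps. So run the induction on the stronger order-theoretic statement, which your construction produces anyway. The paper does this tacitly by building an increasing $F$.

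Second, compactness of $\mathcal I_Y$ in the topology from $\mathcal Q_Y$ does not follow from the hypotheses. For compact regularly closed $Y$ it is equivalent to $Y=\bigcup\mathcal I_Y$, and without it the statement is false. For example, $Y=\mathbb B\cup\{3\}\cup\bigcup_{n>0}\bigl([3-\frac1{2n},3-\frac1{2n+1}]\cup[3+\frac1{2n+1},3+\frac1{2n}]\bigr)$ satisfies every hypothesis with $\alpha=1$. Yet it has the trivial component $\{3\}$, whereas $\mathbf X_1$ has none. The paper's proof relies on the same assumption, since its windows are meant to exhaust $Y$. You should state $Y=\bigcup\mathcal I_Y$ as an explicit assumption rather than present compactness as a consequence.
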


\begin{proof} When $\alpha =0 $, then  both $Y$ and $\textbf{X}_0$ are closed intervals. Assume inductively that the thesis is fulfilled for each $\beta <\alpha$.

When $\alpha = \beta+1$, then $\mathcal I^{(\alpha)}_{\textbf{X}_\alpha} = \{[0,1]\}$ and let $\mathcal I^{(\alpha)}_{Y} = \{[q,p]\}$. Choose numbers $a_n,b_n, c_n$ and $d_n$,  belonging to $\mathbb R\setminus Y$, so that 
  \begin{itemize}
	\item $a_1 < b_1 < a_2 < \ldots < b_n < a_{n+1} < \ldots < q$, 
	\item $c_1 > d_1 > c_2 > \ldots > d_n > c_{n+1} > \ldots > p$,
	\item if $n>0$, then $\mathcal I^{(\beta)}_{[a_n,b_n]\cap Y}$ (resp. $\mathcal I^{(\beta)}_{[d_n,c_n]\cap Y}$) contains  exactly one interval belonging to $\mathcal I^{(\beta)}_{Y}$.
  \end{itemize} 
	Analogously,  choose numbers belonging to $\mathbb R \setminus \textbf{X}_\alpha$, taking $0$ for $q$ and $1$ for $p$. Then fix an increasing homeomorphism $F\colon Y \to \textbf{X}_\alpha $ so that
$F[[q, p]] =[0,1]$ and  $$\{ F[I]\colon I \in \mathcal I^{(\beta)}_Y \} = \mathcal I^{(\beta)}_{\textbf{X}_\beta},$$
assuming inductively that each  restriction of $F$ to $Y\cap ([a_n, b_n] \cup [d_n, c_n]) $ is  continuous and increasing, and such that 
$$F[ Y\cap ( [a_n, b_n] \cup [d_n, c_n]) ] = \textbf{X}_\beta \cap ([F(a_n), F(b_n)] \cup [F(d_n), F(c_n)]),$$ where numbers $F(a_n), F(b_n), F(d_n)$ and $F(c_n)$ belong to $\mathbb R \setminus \textbf{X}_\alpha$,  we get that $F$ is a desired homeomorphism.

If $\alpha$ is a  limit ordinal, then let     $\mathcal I^{(\alpha)}_{Y} = \{[q,p]\}$ and, for each $\beta < \alpha$, let $ I_\beta= \inf \mathcal I^{(\beta)}_Y$ and $ J_\beta= \sup \mathcal I^{(\beta)}_Y$. Choose numbers $a_\beta,b_\beta, c_\beta$ and $d_\beta$,  belonging to $\mathbb R\setminus Y$, so that 
  \begin{itemize}
	\item[] $a_1 < b_1 < a_2 < \ldots < b_\beta < a_{\beta+1} < \ldots < q$, 
	\item[] $c_1 > d_1 > c_2 > \ldots > d_\beta > c_{\beta+1} > \ldots > p$, 
	\item[]  $ I_\beta \subset [a_\beta, b_\beta]$ and  $ J_\beta \subset [d_\beta, c_\beta]$ for each $\beta < \gamma$.   \end{itemize}
Same as above,  choose numbers $F(a_n), F(b_n), F(d_n)$ and $F(c_n)$ belonging to $\mathbb R \setminus \textbf{X}_\gamma$, so that the part of the mapping $F$ defined in this way is increasing and  $F(q)=0$, and  $F(p)=1$. Then, using the inductive hypothesis,  extend $F$ to an increasing homeomorphism $F\colon Y \to \textbf{X}_\alpha$. \end{proof}

\begin{cor} \label{c24} Let  $Z\subset \mathbb R$ be a compact and   regularly closed set which satisfies condition $(\star)$.  If the derivative  $\mathcal I_Z^{(\alpha)}\not= \emptyset$ is finite, then $Z$ is homeomorphic to a union of finitely many copies of ${\mathbf X}_\alpha$. \end{cor}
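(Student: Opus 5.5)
The plan is to split $Z$ into finitely many closed-open pieces, each satisfying the hypotheses of Proposition \ref{p24}, and then apply that proposition to each piece.

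Let $\mathcal I_Z^{(\alpha)}=\{K_1<K_2<\ldots<K_m\}$. Since $\mathcal G_Z$ interlaces $\mathcal I_Z$, we can choose $Z$-gaps $G_1,\ldots,G_{m-1}$ with $K_i<G_i<K_{i+1}$, and pick real points $z_i\in G_i$. These points cut $Z$ into closed-open sets $Z_1,\ldots,Z_m$, where $Z_i=Z\cap(z_{i-1},z_i)$, $z_0=-\infty$ and $z_m=+\infty$. Each $Z_i$ is compact. It is also regularly closed, because the union of its intervals is the trace on an open set of a dense subset of $Z$. Moreover $\mathcal I_{Z_i}$ is an open-and-closed subset of $\mathcal I_Z$ in the order topology; it is an order-convex piece bounded by gaps. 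Taking derived sets commutes with restriction to clopen pieces, so $\mathcal I_{Z_i}^{(\alpha)}=\mathcal I_Z^{(\alpha)}\cap\mathcal I_{Z_i}=\{K_i\}$.

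Next I would check that each $Z_i$ satisfies $(\star)$. Given $I\in\mathcal I_{Z_i}$, condition $(\star)$ for $Z$ provides an ordinal $\beta$ and gaps $J_0,J_1\in\mathcal G_Z$ with $(\mathcal I_Z\cap[J_0,I])^{(\beta)}=\{I\}=(\mathcal I_Z\cap[I,J_1])^{(\beta)}$. If $J_0$ lies to the left of $z_{i-1}$, replace it by the gap $G_{i-1}$, or by the leftmost gap $(\leftarrow,\inf Z_i)$ when $i=1$. The interval $\mathcal I\cap[G_{i-1},I]$ is a clopen final segment of $\mathcal I\cap[J_0,I]$ that contains $I$. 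Its $\beta$-th derivative is therefore the intersection of $\{I\}$ with that segment, which is still $\{I\}$. The same argument applies symmetrically to $J_1$. Gaps of $Z$ lying inside $(z_{i-1},z_i)$ are gaps of $Z_i$, and the cutting gaps become the unbounded gaps of $Z_i$. So $(\star)$ transfers to $Z_i$.

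Proposition \ref{p24} then gives that each $Z_i$ is a copy of $\mathbf X_\alpha$. Since the $Z_i$ are pairwise disjoint and clopen in $Z$, $Z$ is homeomorphic to $Z_1\cup\ldots\cup Z_m$, which is a union of $m$ copies of $\mathbf X_\alpha$. In fact, by shifting each $\mathbf X_\alpha$-copy into $(z_{i-1},z_i)$ one can obtain a homeomorphism of $\mathbb R$, but that is not needed.

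The main obstacle is the bookkeeping that derived sets localize to clopen convex pieces, both for $\mathcal I_{Z_i}^{(\alpha)}$ and inside $(\star)$. This rests on the fact that the order topology on $\mathcal I_{Z_i}$ coincides with the subspace topology from $\mathcal I_Z$. That holds because the pieces are separated by gaps $G_i$ which are not elements of $\mathcal I_Z$, so each $\mathcal I_{Z_i}$ is simultaneously an initial segment of $\mathcal I_{Z_{\ge i}}$ and a final segment of $\mathcal I_{Z_{\le i}}$, and is therefore open and closed.
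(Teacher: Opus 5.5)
Your proposal is the paper's proof. The paper also chooses points $h_1,\dots,h_{n-1}\in\mathbb R\setminus Z$ separating the finitely many members of $\mathcal I_Z^{(\alpha)}$ and applies Proposition~\ref{p24} to each piece $Z\cap[h_i,h_{i+1}]$. You go further than the paper by checking that each piece is compact, is regularly closed, has $\mathcal I_{Z_i}^{(\alpha)}=\{K_i\}$ and satisfies $(\star)$. The paper leaves all of this implicit.

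One step does not hold as you justify it: the clopenness of $\mathcal I_{Z_i}$ in $\mathcal I_Z$.

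\begin{itemize}
\item Being an initial segment of one part and a final segment of another does not make a set clopen in an order topology.
\item In the intrinsic order topology of $(\mathcal I_Z,<)$ the claim can fail precisely \emph{because} $G_i\notin\mathcal I_Z$. If $\mathcal I_{Z_i}$ has no largest element while $\mathcal I_{Z_{i+1}}$ has a least element $L$, then $L$ is a limit of $\mathcal I_{Z_i}$ in $(\mathcal I_Z,<)$.
\item Under that literal reading the corollary itself is false. Take $Z=\bigcup_{n\geqslant 1}[1-\frac1{2n},1-\frac1{2n+1}]\cup\{1\}\cup[2,3]$. It is compact and regularly closed, it satisfies $(\star)$ with ordinal $0$ at every interval, and $\mathcal I_Z^{(1)}=\{[2,3]\}$. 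Yet it has the trivial component $\{1\}$, whereas no finite union of copies of $\mathbf X_1\cong\mathbb B$ has one.
\end{itemize}

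So you should state explicitly that $\mathcal I_Z$ carries the topology inherited from the order topology of $\mathcal Q_Z$. This is the reading the paper uses in Proposition~\ref{p22}, and under it $[2,3]$ above is isolated, so that example is no longer a counterexample. With this topology the gaps are genuine points of the ambient order. Let $G_0$ and $G_m$ be the two unbounded $Z$-gaps. Then $\mathcal I_{Z_i}=\mathcal I_Z\cap(G_{i-1},G_i)$, with the interval taken in $\mathcal Q_Z$. Since $G_{i-1},G_i\notin\mathcal I_Z$, the complement $\mathcal I_Z\cap\bigl((\leftarrow,G_{i-1})\cup(G_i,\rightarrow)\bigr)$ is open as well. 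The same observation justifies your localization of the derived sets when transferring $(\star)$, and with it the rest of your argument goes through.
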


\begin{proof} If $\mathcal I_Z^{(\alpha)} = \{g_1,g_2, \ldots, g_n\},$ then 
choose numbers $h_1, h_2, \ldots , h_{n-1}$  belonging to $ \mathbb R \setminus Z$ so that $$h_0< \inf Z< g_1 < h_1 < g_2 < \ldots < h_{n-1} < g_n < \sup Z< h_n.$$  By Proposition \ref{p24}, each set $Z\cap [h_i,h_{i+1}]$ is homeomorphic to  $\textbf{X}_\alpha$, therefore $Z$ is homeomorphic to a union of finitely many copies of ${\mathbf X}_\alpha$.
	\end{proof}

\begin{thm}\label{t25} Any $\mathbf{X}_\alpha$ is  a $t$-set.  
 \end{thm}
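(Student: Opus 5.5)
The plan is to take an arbitrary copy $Y\subseteq\mathbb R$ of $\mathbf X_\alpha$ and reduce to Proposition \ref{p24}. That is, we check that $Y$ is compact and regularly closed, that $\mathcal I_Y^{(\alpha)}$ is a singleton, and that $Y$ satisfies condition $(\star)$. Proposition \ref{p24} then gives a homeomorphism with $\mathbf X_\alpha$ which, as its proof shows, is increasing between the two sets. An increasing homeomorphism $F\colon Y\to\mathbf X_\alpha$ induces an isomorphism $\mathcal Q_Y\to\mathcal Q_{\mathbf X_\alpha}$ carrying gaps onto gaps. Theorem \ref{tr2}, or equivalently Proposition \ref{co7}, then extends it to a homeomorphism of $\mathbb R$. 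Composing two such maps handles any pair of copies.

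The first step is to transfer the invariants through an abstract homeomorphism $h\colon\mathbf X_\alpha\to Y$. Compactness is immediate. Regular closedness is topological: it means that the union of the non-trivial components is dense, and $h$ maps components onto components and arcs onto arcs. Next, $h$ induces a bijection $\mathcal I_{\mathbf X_\alpha}\to\mathcal I_Y$. The key observation is that this bijection is a homeomorphism for the order topologies, where the order topology on $\mathcal I_Y$ coincides with the quotient topology on the decomposition of $\bigcup\mathcal I_Y$, or with the topology of the hyperspace of components. Indeed, since no interval has a free endpoint (all of $\mathcal I_{\mathbf X_\alpha}$ lies in $\mathcal B$, as $\mathcal C$ and $\mathcal D$ are topological notions preserved by $h$), a sequence of intervals converges in the order topology exactly when it converges in the Hausdorff metric to an interval. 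Hence derived sets correspond, and $\mathcal I_Y^{(\alpha)}$ is a singleton $\{K\}$.

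The main obstacle is condition $(\star)$, since it refers to gaps $J_0,J_1$ on each side of an interval, and a homeomorphism of $X$ need not preserve the order. I would verify $(\star)$ locally. Given $I\in\mathcal I_Y$, choose small clopen neighbourhoods of $h^{-1}[I]$ of the form $(a,b)\cap\mathbf X_\alpha$, using the structure of $S_\gamma$, where each side of $h^{-1}[I]$ carries Cantor--Bendixson rank equal to the rank $\gamma$ of $h^{-1}[I]$. This holds by construction: in $S(\cdot)$ the central interval $\{0\}\times[0,1]$ is approached symmetrically by the $\frac1n$ and $-\frac1n$ copies, and in the limit case by $\alpha^*$ on both sides. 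Their images are clopen sets $U\ni I$ of $Y$ with $\mathcal I_U^{(\gamma)}=\{I\}$.

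The issue is to show that the left part $U\cap(\leftarrow,\inf I]$ already has rank $\gamma$ at $I$. Suppose instead that it had lower rank $\beta<\gamma$. Then a small enough clopen set $V\subseteq U\cap(\leftarrow,\inf I)$ adjacent to $I$ would contain only intervals of rank $<\beta$ accumulating at $\inf I$. In $\mathbf X_\alpha$, however, every interval of rank $\gamma$ has every clopen neighbourhood splitting into two clopen pieces each containing it in its closure. More precisely, the point $\inf I$ and the point $\sup I$ are each limits of intervals of all ranks $<\gamma$ from outside $I$, and this property, namely that each endpoint of the component is a limit of components of rank $\ge\beta$ for every $\beta<\gamma$, is purely topological. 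It therefore transfers via $h$ to the corresponding endpoints of $I$ in $Y$. Choosing the gaps $J_0,J_1$ at the boundary of the clopen set $U$ then yields $(\star)$ with the ordinal equal to the rank of $I$. With $(\star)$ established, Proposition \ref{p24} applies to $Y$, and Theorem \ref{tr2} finishes the proof.
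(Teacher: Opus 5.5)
There is a genuine gap in the step you call the key observation: that $h$ induces a homeomorphism between $\mathcal I_{\mathbf X_\alpha}$ and $\mathcal I_Y$ with their order topologies. Everything afterwards depends on it, namely that $\mathcal I_Y^{(\alpha)}$ is a singleton and all the rank bookkeeping in your proof of $(\star)$. The reasons you give for it are false.

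\begin{itemize}
\item It is not true that $\mathcal I_{\mathbf X_\alpha}=\mathcal B_{\mathbf X_\alpha}$. An interval of rank $0$ is separated from its neighbours by gaps, so both its endpoints are free. For example, in $\mathbf X_1\cong\mathbb B$ every interval other than $[0,1]$ lies in $\mathcal D_{\mathbb B}$.
\item The Hausdorff metric is the wrong topology. An interval disjoint from a non-degenerate $I$ is at Hausdorff distance at least $\frac12(\sup I-\inf I)$ from $I$, so $\mathcal I_Y$ is discrete in that metric.
\item The invariance fails for compact sets in general, so it needs a specific property of $\mathbf X_\alpha$. Take $\{0\}\cup\bigcup_{n\geqslant1}[-\frac1{2n},-\frac1{2n+1}]\cup[1,2]$ and its homeomorphic copy $[-3,-2]\cup\{0\}\cup\bigcup_{n\geqslant1}[-\frac1{2n},-\frac1{2n+1}]$. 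Their families of intervals have order types $\omega+1$ and $\omega$, with first derived sets $\{[1,2]\}$ and $\emptyset$.
\end{itemize}

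The missing idea is that $\mathbf X_\alpha$ has no degenerate components ($\mathbf X_\alpha=\bigcup\mathcal I_{\mathbf X_\alpha}$), so $Y$ has none either. Let $q\colon Y\to\mathcal I_Y$ send a point to its component. Then $q$ is continuous for the order topology, because $q^{-1}[\{K\colon K<J\}]=Y\cap(\leftarrow,\inf J)$, and symmetrically for the other rays. Since $Y$ is compact, $q$ is a closed quotient map. Hence the order topology on $\mathcal I_Y$ is the decomposition topology of the components of $Y$, which $h$ respects.

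With this in place, your argument for $(\star)$ goes through. Take the convex piece $Y\cap(u,v)$ of $U$ containing $I$, with $u,v\notin Y$, and the gaps $J_0\ni u$ and $J_1\ni v$. The intervals strictly between $J_0$ and $I$ form an open subset of $\mathcal I_Y$ and have rank $<\gamma$. Meanwhile $I$ reaches rank $\gamma$ in $\mathcal I_Y\cap[J_0,I]$, because $\inf I$ is a limit of intervals of every rank $<\gamma$; the right side is symmetric. Proposition \ref{p24} and Theorem \ref{tr2} then finish as you say.

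The paper's proof instead reruns the inductive construction of Proposition \ref{p24} directly on the copy $Y$, extending it to an increasing homeomorphism of $\mathbb R$. It tacitly assumes that $\mathcal I^{(\alpha)}_Y$ is a singleton approached from both sides by intervals of all lower ranks. Once repaired, your proof is the same construction with that assumption actually justified.

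One caveat, shared with the paper: the two-sided structure you cite ``by construction'' requires the limit stage to be built along a cofinal sequence of type $\omega$. If $\alpha^*$ is read literally as a two-sided copy of $\alpha$, take $\alpha=\omega\cdot2$. The first interval of the block indexed by $\omega$ in the increasing half is then approached only from the left. So it lies in $\mathcal C$, and by Proposition \ref{p11} that set would not be a $t$-set.
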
 
\begin{proof} Suppose that $Y\subset \mathbb R$ is a copy of $\textbf{X}_\alpha$.
When $\alpha =0$, then $\textbf{X}_\alpha =[0,1]$ and $Y=[q,p] \subset \mathbb R$, so  any homeomorphism $F\colon \mathbb R \to \mathbb R$ such that $F(q)=0$ and $F(p)=1$ is good. 

When $\beta +1 = \alpha$, then $\mathcal I^{(\alpha)}_{\textbf{X}_\alpha} = \{[0,1]\}$ and let $\mathcal I^{(\alpha)}_{Y} = \{[q,p]\}$. Choose numbers $a_n, b_n, c_n$ and $d_n$  belonging to $\mathbb R\setminus Y$, as well as numbers $F(a_n), F(b_n), F(c_n)$ and $F(d_n)$  belonging to $\mathbb R\setminus \textbf{X}_\alpha$, same as in the proof of Proposition \ref{p24}, ensuring that the part of $F$ defined in this way is an increasing mapping.
  Then, using the inductive hypothesis,  extend $F$ to an increasing homeomorphism $F\colon \mathbb R \to \mathbb R$ such that
$$F(0)=q \mbox{ and }  F(1)=p, \mbox{ and } \{ F[I]\colon I \in \mathcal I^{(\beta)}_Y \} = \mathcal I^{(\beta)}_{\textbf{X}_\beta}.$$

Suppose that  $\alpha$ is a  limit ordinal,    $\mathcal I^{(\alpha)}_{Y} = \{[q,p]\}$,   $ I_\beta= \inf \mathcal I^{(\beta)}_Y$ and $ J_\beta= \sup \mathcal I^{(\beta)}_Y$.  For each $\beta < \alpha$, 
choose numbers $a_\beta, b_\beta, c_\beta$ and $d_\beta$  belonging to $\mathbb R\setminus Y$, as well as numbers $F(a_\beta), F(b_\beta), F(c_\beta)$ and $F(d_\beta)$  belonging to $\mathbb R\setminus \textbf{X}_\alpha$, same as in the proof of Proposition \ref{p24}, ensuring that the part of $F$ defined in this way is an increasing mapping.
  Then, using the inductive hypothesis,  extend $F$ to an increasing homeomorphism $F\colon \mathbb R \to \mathbb R$ such that
$F(0)=q$,  $F(1)=p$ and $\{ F[I]\colon I \in \mathcal I^{(\beta)}_Y \} = \mathcal I^{(\beta)}_{\textbf{X}_\beta}.$
  \end{proof}

  \begin{pro}\label{p26}
   There exists at least  $\omega_1$ many  $ t $-sets  regularly closed and compact subsets of $ \mathbb R$, which are not homeomorphic. \end{pro}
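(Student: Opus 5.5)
The family $\{\mathbf X_\alpha\colon \alpha<\omega_1\}$ will serve as the witness. By Theorem \ref{t25} each $\mathbf X_\alpha$ is a $t$-set. By construction and Proposition \ref{po4}, each $\mathbf X_\alpha$ is a compact, regularly closed subset of $\mathbb R$ with $\mathbf X_\alpha=\bigcup\mathcal I_{\mathbf X_\alpha}$. It therefore remains to show that $\mathbf X_\alpha$ and $\mathbf X_\beta$ are not homeomorphic whenever $\alpha\neq\beta$.

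The plan is to attach to a compact set $X\subset\mathbb R$ a topological invariant that recovers the Cantor--Bendixson rank of $(\mathcal I_X,<)$. Any homeomorphism $h\colon X\to Y$ maps components onto components, so it induces a bijection $\mathcal I_X\to\mathcal I_Y$, $I\mapsto h[I]$. We need to check that this bijection is a homeomorphism with respect to the order topologies. The intended route avoids order entirely. In the decomposition space of $X$ into its components (a compact metrizable quotient), the non-degenerate components form a subspace. On the other hand, a basic order-open neighbourhood of $I$ in $\mathcal I_X$ can be taken to be $\{J\in\mathcal I_X\colon J\subset (u,v)\}$ with $u,v\notin X$, and $(u,v)\cap X$ is clopen in $X$. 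Conversely, since $X$ is compact and zero-dimensional modulo components, every clopen neighbourhood of $I$ in $X$ contains a set of the form $(u,v)\cap X$ containing $I$, because gaps accumulate at the endpoints of $I$ unless those endpoints are free. Endpoints of intervals in $\mathbf X_\alpha$ are not free, but since the claim is only needed for $\mathbf X_\alpha$, we may also simply handle the case of a free endpoint by using the gap immediately adjacent to it. Hence the order topology on $\mathcal I_X$ coincides with the topology on $\mathcal I_X$ determined by clopen subsets of $X$ (saturated by components), and this is preserved by $h$.

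With this in hand, the derived sets behave invariantly: $h$ carries $\mathcal I_X^{(\gamma)}$ onto $\mathcal I_Y^{(\gamma)}$ for every ordinal $\gamma$. For $\mathbf X_\alpha$, it was noted before Proposition \ref{p24} that $\mathcal I_{\mathbf X_\alpha}^{(\alpha)}$ is a singleton. This gives $\mathcal I^{(\alpha+1)}_{\mathbf X_\alpha}=\emptyset$, so the least $\gamma$ with an empty $\gamma$-th derivative is $\alpha+1$. If $\alpha<\beta$, then $\mathcal I^{(\alpha+1)}_{\mathbf X_\beta}\supseteq\mathcal I^{(\beta)}_{\mathbf X_\beta}\neq\emptyset$, while $\mathcal I^{(\alpha+1)}_{\mathbf X_\alpha}=\emptyset$. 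Thus $\mathbf X_\alpha$ and $\mathbf X_\beta$ cannot be homeomorphic.

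The main obstacle is the first step, the topological invariance of the order topology on $\mathcal I_X$. I would prove it by showing that $I_n\to I$ in order topology iff every clopen set of $X$ containing $I$ contains almost all $I_n$. This follows because the sets $(u,v)\cap X$ with $u,v\notin X$ form a base of clopen sets around a non-free-ended component, and such sets are exactly the order neighbourhoods intersected with $\mathcal I_X$. One should also double check that the rank computation for $S_\alpha$ at limit stages is correct: the points $\{\gamma\}\times S_\gamma$ contribute components whose ranks are cofinal in $\alpha$, and only the central $[0,1]$ survives at stage $\alpha$.
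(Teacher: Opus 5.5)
Your proposal is correct and is essentially the paper's proof. You use the same family $\{\mathbf X_\alpha\colon\alpha<\omega_1\}$, tameness from Theorem~\ref{t25}, and the derived sets of $\mathcal I_X$ as the distinguishing invariant; you compare emptiness of the $(\alpha+1)$-st derivative, while the paper compares a singleton with an infinite set at the level of the smaller index. Your only addition is an explicit argument that a homeomorphism $X\to Y$ induces a homeomorphism $\mathcal I_X\to\mathcal I_Y$, which the paper uses tacitly.

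One small correction inside that argument. Free endpoints are harmless, and your claim that they do not occur in $\mathbf X_\alpha$ is false (e.g.\ the side intervals of $\mathbf X_1$, a copy of $\mathbb B$, have both endpoints free). The real issue is different: the intrinsic order topology of $(\mathcal I_X,<)$ can differ from the clopen-induced one when some trivial component of $X$ is accumulated by intervals. This cannot happen for $\mathbf X_\alpha$, since $\mathbf X_\alpha=\bigcup\mathcal I_{\mathbf X_\alpha}$, so your identification does hold there.
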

\begin{proof} The family $\{\textbf{X}_\alpha \colon \alpha < \omega_1\}$ consists of uncountably many regularly closed and compact subsets of $\mathbb R$ such that $\beta <\alpha$ implies that $\mathcal I^{(\beta)}_{\textbf{X}_\beta}$ is a singleton  and $\mathcal I^{(\beta)}_{\textbf{X}_\alpha}$ is infinite, therefore $\textbf{X}_\beta$ and $\textbf{X}_\alpha$ are not homeomorphic.   \end{proof}

\begin{thm}\label{t28} 
   There exists at most  $\omega_1$ many  $ t $-sets, which are not homeomorphic.
	\end{thm}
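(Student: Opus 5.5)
Together with the classification results already obtained, we would show that every $t$-set is homeomorphic to a member of an explicitly countable-times-$\omega_1$ indexed list. Let $X$ be a $t$-set; by Proposition~\ref{po6} it is compact and by Proposition~\ref{p11} $\mathcal C_X=\emptyset$. We split according to the structure of $\mathcal I_X$.

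\emph{Case $\mathcal I_X=\emptyset$.} By Proposition~\ref{p16}, $X$ is finite or a copy of the Cantor set, giving countably many homeomorphism types.

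\emph{Case $\mathcal I_X=\mathcal D_X\neq\emptyset$.} By Proposition~\ref{p17}, $X$ is a finite union of closed intervals or an interval plus a point: countably many types.

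\emph{Case $\mathcal B_X\neq\emptyset$ finite.} By Proposition~\ref{p19}, $X$ is a union of $n$ copies of $\mathbb A$, $\mathbb B$ or $\mathbb D$: countably many types.

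\emph{Case $\mathcal B_X$ not scattered.} By Corollary~\ref{c21}, $X$ is an $M$-Cantorval, and by Corollary~\ref{c15} and Theorem~\ref{tr2} all $M$-Cantorvals are homeomorphic: one type.

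The remaining case, $\mathcal B_X$ infinite and scattered, carries the essential work. By the note after Corollary~\ref{c21}, $\mathcal I_X$ is then scattered as well, so Lemma~\ref{l23} gives condition $(\star)$. Proposition~\ref{p22} shows that $\mathcal B_X$ is compact in the order topology, and hence so is $\mathcal I_X$ restricted to its non-isolated part. Since $\mathcal Q_X$ is countable, the Cantor--Bendixson rank of $\mathcal I_X$ is a countable ordinal. Compactness makes the last nonempty derivative $\mathcal I_X^{(\alpha)}$ finite and nonempty, with $\alpha<\omega_1$.

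Next we separate the filler. By the proof of Proposition~\ref{p22}, the points of $X$ accumulating at intervals of $\mathcal B_X$ are of one kind only: isolated points, intervals from $\mathcal D_X$, or Cantor pieces. Mixing kinds would let us produce two copies distinguishable as in Proposition~\ref{p19}. When the filler consists of $\mathcal D_X$-intervals, $X$ is regularly closed and Corollary~\ref{c24} makes $X$ homeomorphic to a union of $n$ copies of $\mathbf X_\alpha$. When the filler is isolated points or Cantor pieces, we run the same argument with a modified base $S_\alpha(\{0\})$ or $S_\alpha(\text{Cantor set})$ in place of $S_\alpha([0,1])$. We reprove Proposition~\ref{p24} and Corollary~\ref{c24} verbatim for these bases, using $(\star)$ and the same back-and-forth construction of the homeomorphism along gaps. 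Thus each $t$-set in this case is determined up to homeomorphism by the triple (filler type, $\alpha$, $n$), which gives at most $3\cdot\omega_1\cdot\omega=\omega_1$ types.

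Summing over all cases gives at most $\omega_1$ homeomorphism types. The main obstacle is the scattered case. It requires (a) verifying that the filler type is uniform throughout $X$, and not only near a single isolated $\mathcal B_X$-interval, and (b) adapting Proposition~\ref{p24} when the accumulating points are not intervals. For (a) we would argue as in Lemma~\ref{l23}: using Lemma~\ref{le9} and cut-and-shift copies, two different filler kinds can be placed at $\inf$ of two copies, and $(\star)$ rules that out.
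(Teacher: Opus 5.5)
Your proposal is correct and follows a genuinely different route from the paper in one subcase. The case split and the final count agree with the paper's. In the scattered infinite case you also make explicit two inputs the paper uses silently before invoking Corollary \ref{c24}: condition $(\star)$ from Lemma \ref{l23}, and finiteness of the top nonempty derivative from the compactness in Proposition \ref{p22}.

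The real difference is how you treat $t$-sets that are not regularly closed. The paper builds no new models there. It strips the filler, passing to $Y^{(1)}$ when there are isolated points or to $Y\setminus Y^{*}$ when there are closed-open Cantor pieces. It claims this is a regularly closed $t$-set with the same intervals and applies the regularly closed case to it. It then recovers $Y$, because the filler in each gap of the stripped set is just sequences (resp.\ Cantor pieces) converging to that gap's endpoints. You instead keep $Y$ and rerun the induction of Proposition \ref{p24}, with $\mathbb A$ or $\mathbb D$ from Proposition \ref{p19} at the bottom.

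The paper's route reuses Corollary \ref{c24} unchanged. However, it leans on the assertion, only sketched there, that the stripped set is again a $t$-set. What is really needed, $(\star)$ and a finite top derivative, passes over directly, since $\mathcal I_Y=\mathcal I_{Y^{(1)}}$ as ordered families. Your route avoids that assertion and directly produces the models $S_\alpha(\{0\})$ and $S_\alpha(\mbox{The Cantor set})$ named in the paper's closing classification. The price is redoing the induction, which is not literally verbatim: you must check that every piece cut out between gaps carries the same filler.

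One correction to your step (a): it is not $(\star)$ that excludes mixed filler, but the $t$-set property itself. Also, putting different kinds at the $\inf$ of two copies is not enough, because a decreasing homeomorphism of $\mathbb R$ swaps the ends. Instead take one copy whose $\inf$ and $\sup$ are both of the first kind, using Corollary \ref{c10} or shifting closed-open pieces; this is possible because filler accumulating at a $\mathcal B_X$-endpoint occurs infinitely often. Compare it with a copy having an extreme point of the second kind.
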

\begin{proof} Suppose $Y$ is a $ t $-set. If $\mathcal B_Y$ is not scattered, then $Y$ is an $M$-Cantorval, see Corollary \ref{c21}. Up to and including Proposition 19, all $t$-sets $Y$, with finite $\mathcal B_Y$, have been discussed and there are countably many of them up to homeomorphism. 
 It remains to consider the cases when $\mathcal B_Y$ is scattered and infinite, so let us now assume that $\mathcal B_Y$ is infinite and scattered. 
	
		If $Y$ is regularly closed, then  there exists $\alpha < \omega_1$ such that $Y$ is a union of finitely many pairwise disjoint closed-open sets containing exactly one interval from  $\mathcal B_{Y^{(\alpha)}}$. Hence, by Corollary \ref{c24}, $Y$ is a union of finitely many copies of $\mathbf X_\alpha$.  Thus, there are exactly  $\omega_1$-many non-homeomorphic   $t$-sets which are regularly closed.

	If there exists an isolated point in $Y$, then 	
	$Y^{(1)}$ is a regularly closed $ t $-set. Indeed,  arguing as in the proofs of Lemma \ref{le9} and Corollary \ref{c10}, we get that $\mathcal I_Y=\mathcal B_Y = \mathcal I_{Y^{(1)}}$. So, $Y \setminus Y^{(1)}$ is an open discrete subset of $Y$. Each  $Y^{(1)}$-gap  intersected  with $Y$ consists of   sequences, which converge to the endpoints of this gap. These properties are sufficient to check that $Y^{(1)}$ is a regularly closed $ t $-set. Thus, there are exactly  $\omega_1$-many non-homeomorphic   $t$-sets which are regularly closed
	
	Analogously, if there exists a closed-open subset of $Y$ which is a copy of the Cantor set,  then  let $Y^*$ be the union of all such closed-open subsets. Then 	
	$Y \setminus Y^{*}$ is a regularly closed $ t $-set. Indeed, by the definition, $Y \setminus Y^{*}$ is a  closed  subset of $Y$. There are no isolated points in $Y$ and $\mathcal C_Y=\mathcal D_Y =\emptyset$, hence  
	$\mathcal I_Y=\mathcal B_Y = \mathcal I_{Y\setminus Y^{*}}$ and $Y \setminus Y^{*}$ is a regularly closed  subset of $Y$. Each  $(Y\setminus Y^{*})$-gap  intersected  with $Y$ after enlarging by its endpoints is homeomorphic to the Cantor set. Just like before, these properties are sufficient  to check that $Y\setminus Y^{*}$ is a regularly closed $ t $-set. So, there are exactly  $\omega_1$-many non-homeomorphic   $t$-sets.	 
	\end{proof} 
	
	As it was noticed in \cite[Theorem 17]{bkp} there exist continuum many non-homeomorphic $L$-Cantorvals. Each $L$-Cantorval is  a compact subset of real numbers and the Continuum Hypothesis is undecidable within standard set theory. So,    Proposition \ref{p26} and Theorem \ref{t28} should be placed among  independence results
	in the sense that the existence of continuum  many $t$-sets is equivalent to the Continuum Hypothesis. 
	
As indicated at the beginning, the order of discussion gives a complete classification of $t$-sets up to homeomorphism. Namely, suppose $X$ is a non-empty $t$-set. By Proposition \ref{p11}, we have $\mathcal C_X = \emptyset$. 
\vspace{2mm}\\ \indent 
	 When $\mathcal Q_X$ is finite, then  $X$ consists of finitely many closed intervals, or a closed interval and an isolated point, or $X$ is finite.
	\vspace{1mm}\\  \indent When $\mathcal I_X = \emptyset$ and $\mathcal Q_X$ is infinite, then $X$ is a copy of the Cantor set. Indeed,  there are no isolated points and all components of $X$ are trivial, so  Proposition \ref{p12} works. 
\vspace{1mm} \\ \indent When $\mathcal B_X $ interlaces  $\mathcal Q_X$, then $X$ is an $M$-Cantorval, see Proposition \ref{p14} or compare Proposition \ref{p18}.
\vspace{1mm} \\ \indent When $\mathcal B_X$ is finite and non-empty, then $X$ is one of  the compact sets which are considered in Proposition \ref{p19}, i.e., $X$ is a finite union of copies  of either $S_1(\{0\})$ or $S_1([0,1])$, or $S_1(\mbox{The Cantor set})$. 
\vspace{1mm} \\ \indent When $\mathcal B_X$ is not scattered, then $X$ is an $M$-Cantorval, see Corollary \ref{c21}.
\vspace{1mm} \\ \indent
 When $\mathcal B_X$ is scattered and infinite, then   $X$ is a finite union of copies  of either $S_\alpha(\{0\})$ or $S_\alpha([0,1])$, or $S_\alpha(\mbox{The Cantor set})$, where $1<\alpha <\omega_1$. All the cases mentioned here follow from Corollary \ref{c24} and from the proof of Theorem \ref{t28}.

\textbf{Acknowledgment. } We would like to thank W. Bielas and T. Kania for their comments, which allowed us to remove typos and ambiguities from the original version published at arXiv:2602.15542.

\end{document}